[1994/12/01]
\documentclass[11pt]{article}
\usepackage[margin=1.08in]{geometry}
\usepackage{cite}
\makeatletter
\let\NAT@parse\undefined
\makeatother
\usepackage{hyperref}
\hypersetup{final}
\usepackage{amsmath,enumerate,amsfonts,amssymb,color,graphicx,amsthm,stmaryrd}
\usepackage{amsmath,amsthm}

\makeatletter
\newtheorem{claim}{Claim}[section]
\newdimen\bibspace
\renewenvironment{thebibliography}[1]{%
	\section*{\refname 
		\@mkboth{\MakeUppercase\refname}{\MakeUppercase\refname}}%
	\list{\@biblabel{\@arabic\c@enumiv}}%
	{\settowidth\labelwidth{\@biblabel{#1}}%
		\leftmargin\labelwidth
		\advance\leftmargin\labelsep
		\itemsep\bibspace
		\parsep\z@skip     %
		\@openbib@code
		\usecounter{enumiv}%
		\let\p@enumiv\@empty
		\renewcommand\theenumiv{\@arabic\c@enumiv}}%
	\sloppy\clubpenalty4000\widowpenalty4000%
	\sfcode`\.\@m}
{\def\@noitemerr
	{\@latex@warning{Empty `thebibliography' environment}}%
	\endlist}

\makeatother

\chardef\bslash=`\\ 

\newtheorem{thm}{Theorem}[section]
\newtheorem{cor}[thm]{Corollary}
\newtheorem{lem}[thm]{Lemma}
\newtheorem{prop}[thm]{Proposition}

\theoremstyle{definition}

\theoremstyle{remark}
\newtheorem{rem}{Remark}[section]

\newcommand{\eval}[2][\right]{\relax
	\ifx#1\right\relax \left.\fi#2#1\rvert}

\begin{document}
	\title{Isolated Singularities of Solutions to the Yamabe Equation with boundary in Dimension 3 and 4}
	\author{Yuxuan Liao, Yuexiao Ma}
	\date{}
	\maketitle
	\begin{abstract}
This paper studies the asymptotic behavior of positive solutions to the boundary Yamabe equation near an isolated singularity  when the metric is not conformally flat. In dimensions 3 and 4,  we establishes the sharp upper bound and, in the non-removable case, the matching lower bound, which also gives a necessary and sufficient condition for removability. Moreover, every solution with a non-removable singularity is shown to be asymptotically cylindrically symmetric, without relying on a global classification of Fowler-type singular solutions. These  results aslo extend the flat half-space theory of Caffarelli–Jin–Sire–Xiong (2014) to non-flat boundary geometries and provide boundary analogues of the interior theories developed by Marques (2008) and Xiong–Zhang (2022).
	\end{abstract}

	\renewcommand{\sectionmark}[1]{}

	\section{Introduction}
	This paper investigates the local behavior of positive solutions to the boundary Yamabe equation near an isolated singularity on the boundary. More precisely, let \(\mathcal{B}_1^+\subset \mathbb R^n\), $n\geq 3$, be a half-ball equipped with a smooth Riemannian metric \(g\). We consider
	\begin{equation}\label{eq:Main_PDE}
		\begin{cases}
			-L_g u = 0,                & \text{in } \mathcal{B}_1^+,                         \\
			B_g u  = u^{\frac{n}{n-2}}, & \text{on } \partial'\mathcal{B}_1^+\setminus\{0\},
		\end{cases}
	\end{equation}
	where
	\[
	L_g=\Delta_g-\frac{n-2}{4(n-1)}R_g,
	\qquad
	B_g u=\frac{\partial u}{\partial \nu_g}+\frac{n-2}{2}h_g u
	\]
	are the conformal Laplacian and the conformal boundary operator, respectively. Here, \(\Delta_g\) denotes the Laplace--Beltrami operator, \(R_g\) is the scalar curvature of \(g\), \(h_g\) is the mean curvature of the boundary, and \(\nu_g\) is the outward unit normal. Moreover, \(\mathcal{B}_r\) denotes the Euclidean ball of radius \(r\) in \(\mathbb{R}^n\), \(B_r\) denotes the Euclidean ball of radius \(r\) in \(\mathbb{R}^{n-1}\), the superscript \(+\) denotes the upper half-ball, and \(\partial'\Omega:=\partial\Omega\cap\{x_n=0\}\). Thus \(\partial'\mathcal{B}_r^+=B_r\). By elliptic regularity, solutions are smooth away from the singular point. The main objective is to determine their behavior as \(x\to0\).

	On a compact smooth Riemannian manifold \((M,g)\) of dimension \(n \ge 3\), the Yamabe problem, which concerns the existence of constant scalar curvature metrics in the conformal class of \(g\), was solved affirmatively through Yamabe \cite{yamabeDeformationRiemannianStructures1960}, Trudinger \cite{trudingerRemarksConcerningConformal1968}, Aubin \cite{aubinEquationsDifferentiellesNon1976}, and Schoen \cite{schoenConformalDeformationRiemannian1984}. The problem is equivalent to solving the Yamabe equation
	\[
	-L_g u = n(n-2)\operatorname{sign}(\lambda_1)u^{\frac{n+2}{n-2}}
	\quad \text{on } M,\qquad u>0,
	\]
	where \(\operatorname{sign}(\lambda_1)\in\{-1,0,1\}\) is the sign of the first eigenvalue of \(-L_g\) on \(M\).

	Solutions of the Yamabe equation on the standard unit sphere \(S^n\) were classified by Obata \cite{obataConjecturesConformalTransformations1971}. Caffarelli, Gidas, and Spruck \cite{caffarelliAsymptoticSymmetryLocal1989} further studied the isolated singularity problem for the Yamabe equation. More precisely, let \(N\) be the north pole of \(S^n\). By stereographic projection, the Yamabe equation on \(S^n\setminus\{N\}\) is transformed into the Euclidean critical equation
	\[
	-\Delta u = n(n-2)u^{\frac{n+2}{n-2}},
	\qquad u>0
	\quad\text{in }\mathbb R^n .
	\]
	They proved that every positive solution is a standard bubble of the form
	\[
	u(x)=
	\lambda^{\frac{n-2}{2}}
	\left(1+\lambda^2|x-x_0|^2\right)^{\frac{2-n}{2}},
	\]
	where \(\lambda>0\) and \(x_0\in\mathbb R^n\).
	They also obtained that every positive solution of the Euclidean Yamabe equation in the punctured unit ball \(B_1\setminus\{0\}\), with \(0\) being a non-removable singularity, satisfies
	\[
	u(x)=u_0(x)(1+o(1))
	\quad\text{as }x\to0,
	\]
	where \(u_0\) is a Fowler solution of the form
	\[
	u_0(|x|)
	=
	|x|^{-\frac{n-2}{2}}\psi(\log |x|).
	\]
	Here \(\psi\) is a positive periodic solution of the corresponding Fowler ordinary differential equation of
	\[
	\psi''(t)-\frac{(n-2)^2}{4}\psi(t)
	+\frac{n(n-2)}{4}\psi(t)^{\frac{n+2}{n-2}}=0.
	\]
	Subsequently, Li \cite{liLocalAsymptoticSymmetry1996}  simplified and extended the asymptotic-symmetry argument. Korevaar, Mazzeo, Pacard and Schoen \cite{korevaarRefinedAsymptoticsConstant1999} improved the remainder to \(O(|x|^\alpha)\) for some \(\alpha>0\) and obtained a first-order expansion. Recently, Han, Li, and Li \cite{hanAsymptoticExpansionsSolutions2021} established asymptotic expansions to arbitrary order for the Yamabe equation and the \(\sigma_k\)-Yamabe equation. For further background on isolated singularities for the Yamabe equation, including higher-order and fractional analogues and related open problems, we refer to the recent survey by Du, Jin, Xiong and Yang \cite{duSurveyIsolatedSingularity2025}.

	When the underlying manifold has nonempty boundary, the boundary Yamabe problem asks for a conformal metric with constant scalar curvature in the interior and constant mean curvature on the boundary. It was formulated and studied by Escobar \cite{EscobarYamabeProblemManifolds1992,escobarConformalDeformationRiemannian1996} and has since been investigated extensively; see, for instance, \cite{escobarConformalDeformationRiemannian1992,marquesPrioriEstimatesYamabe2005,chenAsymptoticBehaviorLeast2015} and the references therein.

	The  flat case for \eqref{eq:Main_PDE} is
	\[
	\begin{cases}
		-\Delta u=0, & \text{in }\mathcal{B}_1^+,\\
		\partial_\nu u=u^{\frac{n}{n-2}},
		& \text{on }B_1\setminus\{0\}.
	\end{cases}
	\]
	Caffarelli, Jin, Sire and Xiong \cite{caffarelliLocalAnalysisSolutions2014} proved the optimal two-sided growth estimate and asymptotic radial symmetry of the boundary trace near a non-removable isolated singularity, together with cylindrical symmetry for the corresponding global singular solutions. While for non-conformally-flat interior metrics, curvature terms enter both the blow-up analysis and the associated Pohozaev identities. Marques \cite{MarquesIsolatedSingularitiesSolutions2008} established Fowler-type asymptotics in dimensions \(3\le n\le5\), and Xiong and Zhang \cite{xiongIsolatedSingularitiesSolutions2022} extended the result to dimension \(6\). Related results for asymptotically flat punctured metrics were obtained by Han, Xiong and Zhang \cite{hanAsymptoticBehaviorSolutions2023}. In these interior results, the local asymptotic behavior is ultimately identified through the classification of the Euclidean global singular solutions.

	We study the corresponding curved-metric boundary problem in dimensions \(n=3,4\). The first main result gives a sharp removability criterion and the optimal two-sided growth rate for non-removable singularities.

	\begin{thm}
		Suppose that \(n=3\) or \(n=4\), and let \(u\) be a nonnegative solution of \eqref{eq:Main_PDE}. Then \(u\) extends smoothly near \(0\) if and only if
		\[
		\liminf_{x\to0} d_g(x,0)^{\frac{n-2}{2}}u(x)=0.
		\]
		If the singularity is non-removable, then there exist positive constants \(c,C>0\) such that
		\[
		c\,d_g(x,0)^{-\frac{n-2}{2}}
		\le u(x)\le
		C\,d_g(x,0)^{-\frac{n-2}{2}}.
		\]
	\end{thm}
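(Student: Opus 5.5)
I would prove the theorem in four steps, in this order: the upper bound, a Harnack inequality on dyadic annuli, removability when the liminf vanishes, and the lower bound by contradiction.

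\textbf{Upper bound.} I would argue by contradiction with a blow-up. Suppose there are points \(x_k\to0\) with \(d_g(x_k,0)^{\frac{n-2}{2}}u(x_k)\to\infty\). Use Fermi-type coordinates in which \(\partial'\mathcal B_1^+\) is flat and \(g_{ij}(0)=\delta_{ij}\). Apply the doubling lemma of Pol\'a\v{c}ik--Quittner--Souplet on the half-balls \(\mathcal B^+_{|x_k|/2}(x_k)\). Rescaling \(u\) around the doubled points by \(M_k=u(\bar x_k)^{\frac{2}{n-2}}\) gives a sequence with locally bounded rescaled functions \(\tilde u_k\). These satisfy equations whose metrics \(g(\bar x_k+\cdot/M_k)\) converge to the Euclidean metric in \(C^2_{loc}\), and whose curvature terms \(R_g/M_k^2\) and \(h_g/M_k\) tend to \(0\). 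By elliptic estimates, \(\tilde u_k\to\tilde u\) in \(C^2_{loc}\). The limit \(\tilde u\) solves either \(\Delta\tilde u=0\) in \(\mathbb R^n\) (if the doubled points stay far from the boundary relative to \(M_k^{-1}\)), or \(-\Delta \tilde u=0\), \(\partial_\nu\tilde u=\tilde u^{n/(n-2)}\) on a half-space, with \(\tilde u(0)=1\). The whole-space case contradicts Liouville for bounded positive harmonic functions, since \(\tilde u\le 2^{(n-2)/2}\). In the half-space case, the Li--Zhu (Ou) classification forces \(\tilde u\) to be a bubble with finite energy. The point is that the singularity is pushed to infinity in the rescaled picture. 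Then \(u\le C d_g^{-\frac{n-2}{2}}\).

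\textbf{Harnack on annuli.} With the upper bound in hand, set \(w_r(y)=r^{\frac{n-2}{2}}u(ry)\) on \(\{1/4<|y|<4\}^+\). It solves a linear equation \(-L_{g_r}w=0\) with boundary condition \(B_{g_r}w=V w\), where \(V=w^{2/(n-2)}\) is bounded uniformly in \(r\). The Harnack inequality for Neumann-type problems then gives \(\max_{|x|=r}u\le C\min_{|x|=r}u\).

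\textbf{Removability.} If \(\liminf d^{\frac{n-2}{2}}u=0\), Harnack upgrades this to \(\max_{|x|=r_j}u=o(r_j^{-\frac{n-2}{2}})\) along a sequence \(r_j\to0\). I then need a Pohozaev identity on \(\mathcal B_r^+\) adapted to \(g\). It involves the boundary flux term \(P(u,r)\) and curvature error terms, controlled by \(|g_{ij}-\delta_{ij}|=O(|x|)\) and \(|\partial g|=O(1)\). The upper bound makes these errors \(O(r^{\,?})\cdot\int |x|^{2-n}\)-type integrals, which are finite and tend to \(0\) exactly when \(n\le 4\). This dimensional restriction is the one in the theorem. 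So \(P(u,r)\) has a limit \(P(u)\) as \(r\to0\). Along \(r_j\), the smallness forces \(P(u)=0\). Next I would show that smallness propagates: via a comparison argument with the fundamental solution of the Neumann problem (the boundary nonlinearity acts as a small potential \(u^{2/(n-2)}\le \varepsilon|x|^{-1}\)), \(u\le C|x|^{-\delta}\) for every \(\delta>0\). Then the boundary data \(u^{n/(n-2)}\in L^p\) with \(p>n-1\), which gives \(u\in L^\infty\) by a bootstrap. Finally, the point has zero capacity in dimension \(n\ge3\), so \(u\) extends smoothly.

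\textbf{Lower bound.} If the singularity is non-removable but \(\liminf_{x\to0} d^{\frac{n-2}{2}}u(x)=0\), the previous step gives a contradiction. So \(\liminf>0\), which together with Harnack yields the lower bound. The subtle case is a sequence with \(r_j^{\frac{n-2}{2}}\min_{|x|=r_j}u\to0\) without global smallness. I would handle it by a blow-up at scale \(r_j\). The rescalings converge to a Euclidean half-space solution with an isolated singularity, and by Caffarelli--Jin--Sire--Xiong that solution is either regular or has Pohozaev constant \(P<0\). In the regular case, the rescaled Pohozaev invariant tends to \(0\); since \(P(u,r)\) is asymptotically constant in \(r\), this gives \(P(u)=0\). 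I would then have to show that \(P(u)=0\) forces removability, again via the decay/bootstrap argument above.

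\textbf{Main obstacle.} I expect the hardest part to be the Pohozaev estimate in the non-flat metric: showing that the curvature error terms are integrable at \(0\) precisely for \(n=3,4\). This requires an expansion of \(g\) in conformal Fermi coordinates, namely \(h_g=0\) and \(g_{ij}=\delta_{ij}+O(|x|^2)\) tangentially, so that the error terms are \(O(r^{4-n}\cdot r)\). It also requires care with the boundary mean-curvature term \(\frac{n-2}{2}h_g u\). If the simple coordinates do not give enough decay, I would first try the conformal normalization that kills \(h_g\) and the trace-free second fundamental form to leading order.
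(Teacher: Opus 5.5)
Your four-step architecture matches the paper's, but two of the steps are missing their essential ingredient.

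\textbf{Upper bound.} Neither alternative of your blow-up gives a contradiction.

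In the interior alternative, the limit is a bounded positive harmonic function on $\mathbb R^n$ with $\tilde u(0)=1$. Liouville gives $\tilde u\equiv1$, which is perfectly consistent. The interior equation is linear, so no Liouville theorem can exclude interior concentration.

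In the half-space alternative, a Li--Zhu bubble is a legitimate limit. ``The singularity is pushed to infinity'' is not a contradiction: bubbles do form in Yamabe-type blow-up. You must use the singularity quantitatively to show they cannot form at points with $d_g(x_k,0)^{\frac{n-2}{2}}u(x_k)\to\infty$.

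The paper does this in several moves:
\begin{enumerate}
\item It blows up at local maxima of the boundary trace, so the limit is the specific bubble $V$ whose critical moving-sphere radius is $n-2$.
\item It renormalizes by conformal Fermi coordinates at each $x_k$ and proves the coarse lower bound $v_k\ge\Lambda M_k^{-1}$.
\item It runs moving spheres on $v_k-v_k^\lambda+h_\lambda$. Here $h_\lambda$ absorbs the Kelvin-transform errors $E_\lambda,E^\partial_\lambda$ coming from the metric and mean curvature, and a barrier $\phi$ controls the far region $R_0\le|y|\le R_k$.
\item Reaching a radius $\Lambda_0>n-1$ contradicts the shape of $V$. This is where $n\le4$ first enters.
\end{enumerate}
Interior concentration is ruled out only afterwards. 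Once the boundary trace is bounded by $C|x|^{-\frac{n-2}{2}}$, the rescalings solve a linear Robin problem with bounded coefficient $v_r^{2/(n-2)}$. The scale-invariant Harnack inequality then transfers the bound inside (Propositions~\ref{prop:scale-harnack} and~\ref{prop:interior_control}).

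\textbf{Removability.} Getting $\mathcal P(u)=0$ from $\liminf=0$ is fine. Your route is more direct than the paper's, which needs the sharper asymptotics $P(r_i,u)=-c_0w_i^2(1+o(1))$ later anyway.

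The ``smallness propagates'' step, however, is circular. A comparison in which $u^{2/(n-2)}\le\varepsilon|x|^{-1}$ acts as a small potential needs that bound on a whole punctured neighborhood, i.e.\ $\limsup_{x\to0}|x|^{\frac{n-2}{2}}u=0$. You only have it near the spheres $|x|=r_j$, and between them the normalized profile may return to size $O(1)$.

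Your lower-bound paragraph does not close this gap either. The rescalings at scale $r_j$ converge to $0$ by Harnack, so they give nothing beyond $\mathcal P(u)=0$. And $\mathcal P(u)=0$ alone does not exclude nontrivial blow-up limits at other scales: any limit regular at the origin, e.g.\ a bubble, has zero Pohozaev constant.

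Excluding this oscillation is the content of Lemma~\ref{lem:removable}:
\begin{enumerate}
\item The profile $w(t)=r^{\frac{n-2}{2}}\bar u(r)$ satisfies a two-sided perturbed ODE inequality.
\item At local minima $t_i$, the rescaled limit is $\tfrac12|y|^{2-n}+\tfrac12$, so $P(r_i,u)\le-\tfrac{c_0}{2}w_i^2$.
\item With $\mathcal P(u)\ge0$, this bounds $w_i^2$ by the integrated geometric errors.
\item Splitting those errors into a core $|x|\le r_i^*$ and a safe zone, and using the excursion-length estimates, gives $w_i^2\le Ce^{-t_i^*}$.
\item This yields $\bar t_i\le t_i^*(1-1/\alpha)+C$, which contradicts $\bar t_i\to\infty$ when $\alpha=\frac{n-2}{2}\le1$.
\end{enumerate}

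Relatedly, your ``main obstacle'' is misplaced. With only first-order flatness and the scale-invariant bounds, $|X\mathcal E_{\rm int}|\le C|x|^{1-n}$, and the Pohozaev errors are $O(r)$ in every dimension. The restriction $n\le4$ comes from the moving-sphere barriers and from this final absorption, not from integrability of the Pohozaev errors.
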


	The second main result establishes the asymptotic cylindrical symmetry of non-removable singularities.

	\begin{thm}
		Suppose that \(n=3\) or \(n=4\), and let \(u\) be a nonnegative solution of \eqref{eq:Main_PDE} with a non-removable singularity at \(0\). Then there exists \(0<\alpha<1\) such that
		\[
		u(x)=\bar{u}(|x'|,x_n)\bigl(1+O(|x|^\alpha)\bigr)
		\quad \text{as } x\to0,
		\]
		where \(\bar{u}(|x'|,x_n)\) denotes the average of \(u\) over the slice \(\{(y',x_n): |y'|=|x'|\}\).

	\end{thm}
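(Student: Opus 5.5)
The argument is the method of moving spheres (equivalently, moving planes after a Kelvin transform) in the directions tangent to the boundary. I take as given the two-sided bound of the first theorem, $c\,|x|^{-\frac{n-2}{2}}\le u\le C|x|^{-\frac{n-2}{2}}$, with $d_g(x,0)\sim|x|$. I work in Fermi-type coordinates centered at $0$: the boundary is $\{x_n=0\}$, $g_{ij}(0)=\delta_{ij}$, the normal direction is geodesic, and the tangential metric is normalized so that $g_{ij}(x)=\delta_{ij}+O(|x|)$ (conformal Fermi coordinates if needed, so that $h_g=O(|x|)$ and the first derivatives have controlled structure). The target is a one-sided comparison in each tangential direction: for every unit $e\in\mathbb R^{n-1}\times\{0\}$ and every $x$ with $|x|=r$ small,
\[
u(x)\le u(x^*)\bigl(1+O(r^\alpha)\bigr),
\]
where $x^*$ is the reflection of $x$ through a tangential hyperplane orthogonal to $e$. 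The hyperplane is taken at distance $O(r^{1+\alpha})$ from $0$, or better, after the Kelvin/inversion centered at $0$, one moves planes in the cylindrical variable $t=-\log|x|$. Since tangential reflections and rotations about the $x_n$ axis preserve the slice $\{|y'|=|x'|,\ y_n=x_n\}$, comparing $u$ along the whole orbit and averaging yields $u(x)=\bar u(|x'|,x_n)(1+O(|x|^\alpha))$.

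The key steps, in order, are as follows.

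\textbf{Step 1 (Kelvin transform).} I pass to $w(y)=|y|^{2-n}u(y/|y|^2)$ on the exterior of a large half-ball. Then $w$ solves a perturbed problem
\[
-\Delta w+\mathcal{E}_1 w=0,\qquad \partial_\nu w+\mathcal{E}_2 w=w^{\frac{n}{n-2}},
\]
where the error operators have coefficients decaying like $|y|^{-1-\cdots}$ relative to the flat ones. By the upper bound, $w\le C|y|^{-\frac{n-2}{2}}$ near infinity.

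\textbf{Step 2 (moving planes with error).} I run moving planes in a tangential direction $e$ for $w$. I compare $w$ with $w_\lambda$, its reflection across $\{y\cdot e=\lambda\}$, and also use an auxiliary correction $h_\lambda$ built from the error terms: a small positive supersolution of order $|y|^{-\frac{n-2}{2}-\alpha}$ times a harmonic-type factor. The inequality to establish is $w_\lambda+h_\lambda\ge w$ in the reflected region for all $\lambda\ge\lambda_0$. Starting the planes is done using the lower bound of the first theorem, which gives positivity of $w_\lambda-w$ near the far end. The plane keeps moving via the maximum principle for the linearized problem, including the boundary Hopf lemma on $\{y_n=0\}$ where the nonlinearity is handled via mean value theorem, and via narrow-domain arguments. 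The correction $h_\lambda$ absorbs the metric error terms; this works in dimensions $3,4$ because $|\mathcal E_i w|\lesssim |y|^{-2}\cdot|y|^{-1}w$ is dominated by the correction.

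\textbf{Step 3 (conclusion).} Letting the plane reach $\lambda\sim0$ and converting back yields the one-sided estimate. Doing this for $\pm e$ gives two-sided comparison. Rotational invariance in $x'$ then follows up to $O(|x|^\alpha)$, and a Harnack inequality converts comparison between points of the same slice into the stated multiplicative error.

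The main obstacle is Step 2, in particular constructing $h_\lambda$. It must simultaneously dominate the curvature errors in the interior and in the boundary operator, which contains $h_g$ and the normal-derivative perturbations, and remain $o(w)$ so that the plane can still be started and the $1+O(|x|^\alpha)$ form survives. Here the dimension restriction $n\le4$ enters: the metric error $O(|x|)$ times $u\sim|x|^{-\frac{n-2}{2}}$ must be integrable against the Green's function of the half-space Neumann problem, and the decay exponents balance only for small $n$. My first attempt is
\[
h_\lambda=\varepsilon\,\lambda^{\,\cdot}\,|y|^{-\frac{n-2}{2}-\alpha}\,\phi(y_n/|y|),
\]
with $\phi$ chosen so that the boundary condition has the right sign. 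If this fails, I would fall back on Marques-style pointwise comparison using estimates of the difference $u-\bar u$ via the linearized operator around the limit cylinder.
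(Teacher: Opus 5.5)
Your Step 2 has a genuine gap, and it is structural rather than a matter of choosing \(h_\lambda\) well.

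\textbf{Why your planes cannot work.} Under \(y=x/|x|^2\), the plane \(\{y\cdot e=\lambda\}\) is the image of the sphere \(|x-x_0|=|x_0|\) with \(x_0=e/(2\lambda)\). That sphere passes through the singular point, and reflection across the plane is the Kelvin transform \(u_{x_0,|x_0|}\).
\begin{itemize}
\item \emph{The lower bound does not start the planes.} Near \(y=\infty\) (i.e.\ \(x\to0\)) one has \(|y^\lambda|/|y|\to1\). So the two-sided bound only gives \(w(y)/w(y^\lambda)\in[c/C,\,C/c]\), not the sign you claim ``near the far end''. A correction of relative size \(|y|^{-\alpha}\) cannot absorb an \(O(1)\) ambiguity.
\item \emph{The far-end inequality is the conclusion itself.} Near \(x=0\), inversion in a sphere through \(0\) is, to first order, reflection in the tangential hyperplane through \(0\). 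Indeed \(u_{x_0,|x_0|}(x)=u(x^*)\bigl(1+O(|x|/|x_0|)\bigr)\) by the gradient bound. So what you need at the far end is \(u(x)\le u(x^*)\bigl(1+O(|x|^\alpha)\bigr)\) as \(x\to0\), with \(x^*\) a tangential reflection of \(x\). That is exactly the statement you are trying to prove.
\item \emph{Negative minima escaping to infinity cannot be excluded.} The linearized Robin coefficient \(\frac{n}{n-2}\xi_\lambda^{2/(n-2)}\asymp|y|^{-1}\) is scale-invariant and has the unfavourable sign. At a negative boundary minimum, Hopf's lemma and \(\partial_\nu W=c_\lambda W\) are consistent, so there is no contradiction. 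Neither a narrow-domain argument nor a decay-at-infinity lemma controls the cylindrical end.
\end{itemize}
Every one of your planes is the degenerate sphere through the singularity, so the singularity never supplies a favourable sign. In the flat case the plane comparison is true, but only as the limit \(\lambda\uparrow|x_0|\) of non-degenerate spheres.

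\textbf{The missing idea: keep the singular point strictly inside the comparison region.}
\begin{itemize}
\item \emph{Where the lower bound enters.} The paper centres the Kelvin transform at a boundary point \(x_0\neq0\), \(a=|x_0|\), takes \(\lambda<a\), and works in the bounded region \(\{\lambda<|y-x_0|<Ka\}\setminus\{0\}\). With \(d=a-\lambda>0\), inverted points of the cap \(|y|\le 2\kappa d\) stay at distance \(\sim d\) from \(0\). Hence \(u_{x_0,\lambda}\le Cd^{-\alpha_0}\) while \(u\ge c(\kappa d)^{-\alpha_0}\) there, and this is where the lower bound is genuinely used.
\item \emph{Outer sphere and metric errors.} The outer sphere \(r=Ka\) is handled by \(K^{2-n}\ll K^{-\alpha_0}\). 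Barriers \(H_\lambda\le0\) and \(\Psi_\lambda\ge0\), vanishing on \(r=\lambda\) and of size \(C(r-\lambda)|y|^{-\alpha_0}\), absorb the Kelvin errors of the metric.
\item \emph{How far the spheres go.} The spheres are pushed only to \(a-\lambda\approx a^{1+\gamma}\), never to the sphere through \(0\).
\item \emph{How symmetry is extracted.} It comes from a local first-order argument, not from reflections through the origin. Take \(x_0=(x'-R\tau,0)\) with \(\tau\perp x'\), and \(\lambda=|x-x_0|(1-\varepsilon)\). A Taylor expansion of \(u_{x_0,\lambda}(x)\) gives a one-sided bound on \(\nabla_\tau\log u(x)\). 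Balancing with \(R=|x|^{2/(2+\gamma)}\) yields \(|\nabla_\tau\log u(x)|\le C|x|^{\sigma-1}\), which is then integrated along great circles of the slice.
\end{itemize}
Your last step (reflections plus the gradient bound to return to the slice) would be fine. It needs an input that your Step 2, as designed, cannot produce.
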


	The main difficulties in the boundary setting are twofold.
	The first concerns the upper bound. In contrast with the interior Yamabe equation, where blow-up analysis is centered at interior concentration points, the critical nonlinearity in the boundary Yamabe problem appears in the boundary condition. Nevertheless, a sequence violating the desired estimate need not concentrate on the boundary. Our argument therefore has a two-stage structure. We first exclude boundary blow-up by a moving-spheres argument adapted to the nonlinear boundary condition. Combined with Harnack and gradient estimates up to the boundary, this boundary control is then transferred to the remaining interior regime through an elliptic reduction of the locally normalized boundary problem, thereby ruling out interior concentration.

	The second concerns the role of Fowler solutions. In the interior Yamabe problem, the limiting profiles are identified with Fowler solutions of the Euclidean critical equation, whereas no corresponding classification is available for the nonlinear boundary problem considered here. In this paper, both the removability criterion and the asymptotic cylindrical symmetry are established directly, without relying on such a classification.
	After the upper bound and the Harnack inequality have been established, we pass to cylindrical variables and combine a boundary Pohozaev identity with one-dimensional estimates for an averaged profile. These estimates show that a vanishing normalized liminf forces the whole profile to decay, and hence the singularity to be removable. By contraposition, every non-removable singularity satisfies the optimal lower bound. The asymptotic cylindrical symmetry is obtained separately by a perturbed moving-spheres argument centered at nearby boundary points, with auxiliary barriers controlling the metric and boundary errors. This yields the stated \(O(|x|^\alpha)\) asymptotic cylindrical symmetry. Thus both conclusions are obtained from local analysis, without first identifying the solution with a classified global Fowler-type profile.

	The restriction to dimensions \(3\) and \(4\) reflects a loss caused by the boundary geometry. In the interior problem, conformal normal coordinates give the effective second-order flatness \(g_{ij}=\delta_{ij}+O(|x|^2)\). In the present boundary setting, the conformal Fermi normalization yields only first-order flatness, and the Kelvin-transform errors in the moving-spheres argument are therefore one order larger. The auxiliary functions must absorb both interior geometric errors and boundary mean-curvature errors, and with the present estimates the far-region barrier and the continuation argument for the upper bound close only for \(n=3,4\). The same geometric loss reappears in the Pohozaev--ODE analysis. Within the present argument, the final absorption requires the decay exponent \(\beta=(n-2)/2\) to satisfy \(\beta\le 1\), which leads to the dimensional restriction \(n\le4\).

	The paper is organized as follows. Section~\ref{sec:upper_bound} establishes the upper-bound estimate and derives the Harnack inequality and gradient estimates. Section~\ref{sec:main_part} proves the removability criterion and the lower bound and establishes the asymptotic cylindrical symmetry. The Appendix contains the technical estimates used in the moving-spheres and asymptotic-symmetry arguments.
		\\
		\noindent\textbf{Acknowledgements.} The author would like to thank Prof. Jiguang Bao and Prof. Jingang Xiong for his helpful suggestions and  constant support.

	\section{The upper bound}\label{sec:upper_bound}
	This section establishes the following upper bound.
	\begin{thm}\label{thm:upper_bound}
		Suppose that \(n=3\) or \(n=4\), and let \(u\) be a positive solution to the equation \eqref{eq:Main_PDE}, then
		\begin{equation}\label{eq:upper_bd}
			\limsup_{x\to0} d_g(x,0)^{\frac{n-2}{2}}u(x) < \infty.
		\end{equation}
	\end{thm}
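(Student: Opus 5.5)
We argue by contradiction, using a blow-up argument combined with the method of moving spheres, following the scheme outlined in the introduction. First we fix conformal Fermi coordinates centered at \(0\), so that \(h_g=O(|x|)\) and \(g_{ij}=\delta_{ij}+O(|x|)\) near \(0\), with \(g_{in}=\delta_{in}\) on the boundary. The quantities \(d_g(x,0)\) and \(|x|\) are comparable, so it suffices to bound \(|x|^{\frac{n-2}{2}}u(x)\). Suppose this fails along a sequence \(x_k\to0\). By the standard doubling (selection) lemma we replace \(x_k\) by points \(p_k\), with \(|p_k|\to 0\), and radii \(\sigma_k\le |p_k|/2\) with the following properties:
\[
u(x)\le 2u(p_k)\ \text{ on } \mathcal{B}_{\sigma_k}(p_k)\cap \mathcal{B}_1^+,\qquad \sigma_k^{\frac{n-2}{2}}u(p_k)\to\infty .
\]
Set \(M_k=u(p_k)\) and \(\lambda_k=M_k^{-2/(n-2)}\), and rescale:
\[
v_k(y)=M_k^{-1}\,u\bigl(p_k'+\lambda_k y\bigr),
\]
where the rescaling is centered at the boundary projection \(p_k'\) when \((p_k)_n/\lambda_k\) stays bounded, and at \(p_k\) otherwise. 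The functions \(v_k\) are locally bounded on domains that exhaust \(\mathbb R^n_+\) or \(\mathbb R^n\). They solve a perturbed problem whose coefficients converge to the Euclidean ones in \(C^1_{loc}\).

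\textbf{Case 1: boundary concentration.} Suppose \((p_k)_n/\lambda_k\) stays bounded. By Harnack and gradient estimates up to the boundary, \(v_k\to v\) in \(C^2_{loc}(\overline{\mathbb R^n_+})\). The limit \(v\) is a positive solution of \(\Delta v=0\) in \(\mathbb R^n_+\) with \(\partial_\nu v=v^{n/(n-2)}\) on the boundary, so by the Li--Zhu classification it is a bubble. To contradict the singular nature of \(u\) at \(0\), we run moving spheres directly on \(u\) centered at boundary points \(\bar x\) near \(p_k'\). We compare \(u\) with its Kelvin transform
\[
u_{\bar x,\lambda}(x)=\Bigl(\frac{\lambda}{|x-\bar x|}\Bigr)^{n-2}u\Bigl(\bar x+\frac{\lambda^2(x-\bar x)}{|x-\bar x|^2}\Bigr)
\]
on the region \(\lambda\le |x-\bar x|\le |p_k|/2\). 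The Kelvin transform no longer solves the same equation: there is an interior error of order \(|x|^{-1}\) times derivatives, coming from \(g-\delta=O(|x|)\), and a boundary error of order \(h_g=O(|x|)\). We absorb both by perturbing the comparison function,
\[
w_\lambda=u-u_{\bar x,\lambda}+E_\lambda ,
\]
where \(E_\lambda\) is an auxiliary barrier of size \(\lambda^{n-2}|x-\bar x|^{2-n}\cdot O(|p_k|^{?}\,|x-\bar x|)\), built from explicit superharmonic functions of the form \(|y|^{3-n}\) and \(|y|^{3-n}\log\) terms, with a boundary correction to handle the Neumann error. The plan is then the usual three steps. First, we start the spheres for small \(\lambda\). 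Second, we use the singularity at \(0\): since \(\liminf\) is large, the inner boundary values of \(u\) dominate, which lets the spheres be pushed to a radius \(\lambda\) of order comparable to \(|p_k|\). Third, this contradicts the bubble profile of \(v\), because a bubble cannot satisfy \(w_\lambda\ge0\) for all \(\lambda\) up to \(\infty\) in the rescaled variables unless it is trivial.

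\textbf{Case 2: interior concentration.} Suppose \((p_k)_n/\lambda_k\to\infty\). Then Case 1, applied to all boundary points together with the boundary Harnack inequality, gives the bound \(u\le C|x|^{-\frac{n-2}{2}}\) near the boundary. Consequently the rescaled boundary data \(\partial_\nu v_k=v_k^{n/(n-2)}\) is controlled on the boundary of the region where we need it. The equation is linear in the interior, \(L_g u=0\), so \(v_k\) is locally a solution of a uniformly elliptic linear equation. Its boundary Neumann data is bounded by the rescaled version of the boundary bound, which tends to zero relative to \(M_k\) at distance \((p_k)_n\). The maximum principle, or a Harnack estimate on the ball \(\mathcal{B}_{(p_k)_n}(p_k)\), then bounds \(u(p_k)\) by its boundary-scale values, which are at most \(C(p_k)_n^{-\frac{n-2}{2}}\). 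This contradicts \(\sigma_k^{\frac{n-2}{2}}M_k\to\infty\). In short, interior blow-up is impossible for a linear interior equation once the boundary is controlled.

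\textbf{Main obstacle.} The hard step is the construction of the barrier \(E_\lambda\) in Case 1, and the continuation of the moving spheres up to the far region. With only first-order flatness, the Kelvin-transform errors are of size \(|x|\,|\nabla^2|\) rather than \(|x|^2\). The barrier must be of order \(\lambda^{n-2}|x|^{3-n}\) relative to the bubble decay \(|x|^{2-n}\), and it has to remain small compared with \(u-u_{\bar x,\lambda}\) up to distance \(|p_k|\). That comparison closes only when the error exponent is integrable, i.e. when \(n\le4\); this is where the dimensional restriction enters. I would first try the barrier
\[
E_\lambda=-C|p_k|\,\lambda^{n-2}|x-\bar x|^{3-n}\Bigl(1-\tfrac{\lambda}{|x-\bar x|}\Bigr),
\]
plus a harmonic correction in \(x_n\) to handle the boundary term.
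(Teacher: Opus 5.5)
There is a genuine gap in Case~1, at the step that is supposed to push the spheres past the critical radius of the limiting bubble.

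You confine the comparison to $\lambda\le|x-\bar x|\le|p_k|/2$, which excludes the singular point. You then justify the continuation by saying that ``since liminf is large, the inner boundary values of $u$ dominate.'' The contradiction hypothesis only says $\limsup_{x\to0}|x|^{\frac{n-2}{2}}u(x)=\infty$. A lower bound of the form $u\gtrsim|x|^{-\frac{n-2}{2}}$ is not available at this stage. In the paper it is proved afterwards, using the upper bound and the Harnack inequality, so invoking it here is circular.

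Without that lower bound the outer-boundary comparison cannot be closed. On $|x-\bar x|=|p_k|/2$ the Kelvin transform has size about $\lambda^{n-2}M_k^{-1}|p_k|^{2-n}$, with $\lambda$ the rescaled radius. The only a priori lower bounds are $u\ge\Lambda>0$ from the maximum principle, or a Green-function bound $v_k\gtrsim|y|^{2-n}$ with a non-sharp constant. The first dominates only if $M_k|p_k|^{n-2}\gtrsim1$. This does not follow from $M_k|p_k|^{\frac{n-2}{2}}\to\infty$: take $M_k=|p_k|^{-3(n-2)/4}$. So inside your region $v_k$ may be indistinguishable from the bubble, the spheres stall exactly at its critical radius, and no contradiction is produced.

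The paper closes this by making the comparison region of fixed size in the original variables. First it re-centres the conformal Fermi normalization at the blow-up point, so that the rescaled metric is $\delta_{ij}+O(\varepsilon_k|y|)$ and the boundary coefficient is $O(\varepsilon_k^2|y|)$. Your coordinates centred at $0$ would instead leave non-Euclidean coefficients at $p_k$. It then runs the spheres on $\mathcal B^+_{R_k}\setminus\overline{\mathcal B^+_\lambda}$ with $R_k=\delta\varepsilon_k^{-1}$, a region that contains the puncture $S_k$.

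In that region the coarse lower bound $v_k\ge\Lambda M_k^{-1}$ of Proposition~\ref{prop:lower}, supplemented for $n=4$ by the far-field bound used in Step~5, beats $v_k^\lambda\lesssim r^{2-n}$ once $r\ge\rho_k=A_\rho\varepsilon_k^{-1/2}$. This is what lets the spheres pass the critical radius. The barrier $h_\lambda$ is needed only for $r<\rho_k$, where $\varepsilon_kr=o(1)$. The puncture is harmless because $v_k^\lambda$ and $h_\lambda$ stay bounded near $S_k$.

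Your Case~2 is the same idea as Proposition~\ref{prop:interior_control}. However, the Harnack step should be done on half-annuli centred at $0$, with the Robin coefficient frozen by the boundary bound, as in Proposition~\ref{prop:scale-harnack}. A ball of radius $(p_k)_n$ around $p_k$ can have its boundary foot near the singular point, where the boundary bound $C|x|^{-\frac{n-2}{2}}$ degenerates.
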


	The estimate is proved by a blow-up argument. The proof consists of two steps. We first exclude boundary blow-up behavior that would violate \eqref{eq:upper_bd}.
	Then, the interior behavior is shown to be controlled by the boundary, thereby excluding interior blow-up as well.

	\subsection{The boundary case}
	The argument proceeds by contradiction. Assume that \eqref{eq:upper_bd} fails for some sequence on the boundary. Then there exists a sequence \(x_k\subset\partial'\mathcal{B}_1^+\to0\) such that
	\begin{equation*}
		d_g(x_k,0)^{\frac{n-2}{2}}u(x_k)\to\infty \quad \text{as } k\to\infty.
	\end{equation*}

	\subsubsection{Step 1: Blow-up analysis}

	It is first shown that the points \(x_k\) can be chosen as local maximum points of \(u\) on the boundary. The proof of this fact is standard and we  describe it for readers’ convenience.

	Set \(f_k(y)=u(y)(d_k-d_g(y,x_k))^{\frac{n-2}{2}}\) on \(\partial'\mathcal B_1^+\cap\mathcal B_{d_k}(x_k)\), where
	\(d_k=\frac12d_g(x_k,0)\). Under this assumption, \(f_k(x_k)\to\infty\), while \(f_k=0\) on \(\partial\mathcal B_{d_k}(x_k)\). Let \(\hat x_k\) be a maximum point of \(f_k\) and set \(\sigma_k=\frac12(d_k-d_g(\hat x_k,x_k))\).
	The definition of \(\hat{x}_k\) gives
	\begin{equation}\label{des:domain}
		u(\hat{x}_k) (2\sigma_k)^{\frac{n-2}{2}} \geq u(x_k) d_k^{\frac{n-2}{2}} \to \infty.
	\end{equation}
	For any \(y\in\mathcal B_{\sigma_k}(\hat x_k)\cap\partial'\mathcal B_1^+\), one has \(d_k-d_g(y,x_k)\ge\sigma_k\), which implies
	\begin{equation*}
		u(y) \leq u(\hat{x}_k) \left(\frac{2\sigma_k}{d_k - d_g(y, x_k)}\right)^{\frac{n-2}{2}} \leq 2^{\frac{n-2}{2}} u(\hat{x}_k).
	\end{equation*}

	Define the blow-up sequence
	\begin{equation*}
		\hat{v}_k(y) = u(\hat{x}_k)^{-1} u\left(\exp_{\hat{x}_k}\bigl(u(\hat{x}_k)^{-\frac{2}{n-2}}y\bigr)\right), \quad y \in u(\hat{x}_k)^{\frac{2}{n-2}} \exp_{\hat{x}_k}^{-1}
		\bigl(\mathcal B_{\sigma_k}^+(\hat{x}_k)\bigr).
	\end{equation*}

	For every fixed \(R>0\), the rescaled half-ball \(B_{2R}^+\) is contained
	in the domain of \(\hat v_k\) for all sufficiently large \(k\), and it does not
	contain the rescaled singular point. Moreover,
	\[
	\hat{v}_k(0)=1,\qquad \hat{v}_k(y',0)\le C_0 \quad \text{for } |y'|\le 2R.
	\]

	The function \(\hat{v}_k\) satisfies
	\[
	-L_{g_k}\hat{v}_k=0\quad\text{in }B_{2R}^+,
	\]
	where \[g_k=(D\exp_{\hat{x}_k})^*_{u(\hat{x}_k)^{-\frac{2}{n-2}}y} g_{\exp_{\hat{x}_k}(u(\hat{x}_k)^{-\frac{2}{n-2}}y)}.\] On the flat boundary,
	\[
	\partial_{\nu_{g_k}}\hat{v}_k+\frac{n-2}{2}u(\hat{x}_k)^{-\frac{2}{n-2}} h_{g_k}\hat{v}_k=\hat{v}_k^{\frac n{n-2}}.
	\]
	Equivalently,
	\[
	\partial_{\nu_{g_k}}\hat{v}_k+q_k(y')\hat{v}_k=0,
	\]
	where
	\[
	q_k(y')=\frac{n-2}{2}u(\hat{x}_k)^{-\frac{2}{n-2}} h_{g_k}(y')-\hat{v}_k(y',0)^{\frac{2}{n-2}}.
	\]
	By the boundary trace control, \(\|q_k\|_{L^\infty(\partial'B_{2R}^+)}\le C_R\).
	Thus the function \(\hat v_k\) solves a uniformly elliptic equation with a uniformly oblique
	Robin boundary condition whose zeroth-order coefficient is uniformly bounded.

	The local boundary Harnack inequality and the local maximum principle for
	nonnegative solutions with oblique Robin boundary condition yield
	\[
	\sup_{B_R^+}\hat{v}_k\le C_R\hat{v}_k(0)=C_R.
	\]

	By the preceding local \(L^\infty\) estimate and standard elliptic estimates, a subsequence (still denoted \(\hat{v}_k\)) converges in \(C^2_{\rm loc}(\overline{\mathbb{R}^n_+})\) to a positive function \(V\) satisfying the limit equation
	\begin{equation}\label{eq:V}
		\begin{cases}
			-\Delta V = 0,                                        & \text{in } \mathbb{R}^n_+,          \\\
			\dfrac{\partial V}{\partial \nu} = V^{\frac{n}{n-2}}, & \text{on } \partial \mathbb{R}^n_+.\\
		\end{cases}
	\end{equation}
	By the classification theorem of Li and Zhu \cite{liUniquenessTheoremsMethod1995} for the boundary Yamabe problem, \(V\) must be a standard bubble:
	\begin{equation*}
		V(y) = c_n \left( \frac{\lambda}{|y' - y_0'|^2 + (y_n + \lambda)^2} \right)^{\frac{n-2}{2}}
	\end{equation*}
	for some \(\lambda > 0\) and \(y_0' \in \mathbb{R}^{n-1}\).

	Since \(V(\cdot,0)\) has a nondegenerate strict tangential maximum at \(y_0'\), and since the boundary equation gives the correct one-sided normal monotonicity into the half-space, \((y_0',0)\) is a strict relative maximum of \(V\) on \(\overline{\mathbb R^n_+}\). By the \(C^2_{\rm loc}\)-convergence up to the boundary, there exist points \(y_k'\to y_0'\) such that \((y_k',0)\) is a local maximum point of \(\hat v_k\) on the flat boundary. Thus, by redefining \(x_k\) as the pre-image of \((y_k',0)\), it may be assumed from the beginning that \(x_k\) is a local maximum point of \(u\) on \(\partial' \mathcal{B}_1^+\) and that
	\begin{equation*}
		u(x_k) \sigma_k^{\frac{n-2}{2}} \rightarrow \infty \quad \text{as } k \rightarrow \infty.
	\end{equation*}

	The metric is next simplified near \(x_k\) by conformal Fermi coordinates. Let \(x\) denote the corresponding local coordinate variable, centered so that \(x=0\) at \(x_k\). In these coordinates the original metric \(g\) can be written as \(g = dx_n^2 + g_{ab}(x)dx^a dx^b\).

	Let \(\bar g_k=w_k^{\frac{4}{n-2}}g\) be the conformally changed metric in these coordinates. The conformal factor \(w_k\) is chosen with \(w_k(0)=1\) and \(\partial_n w_k(0)=\frac{n-2}{2}h_g(0)\). Then
	\begin{equation*}
		-\partial_n w_k(0)+\frac{n-2}{2}h_g(0)=\frac{n-2}{2}h_{\bar g_k}(0),
	\end{equation*}
	which gives \(h_{\bar g_k}(0)=0\); therefore, near \(x=0\), \(h_{\bar g_k}(x)=O(|x|)\). Moreover, since \(\Delta w_k(0)=\sum_{i=1}^{n-1}\partial_{ii}w_k(0)+\partial_{nn}w_k(0)\), the value of \(\partial_{nn}w_k(0)\) may be chosen so that
	\begin{equation*}
		-\frac{4(n-1)}{n-2}\Delta w_k(0)+R_g(0)=R_{\bar g_k}(0),
	\end{equation*}
	guaranteeing \(R_{\bar g_k}(0)=0\) and thus \(R_{\bar g_k}(x)=O(|x|)\). In the resulting conformal Fermi coordinates,
	\begin{equation*}
		(\bar g_k)_{nn}(x)=1,\quad (\bar g_k)_{\alpha n}(x)=0\quad (1\le\alpha\le n-1),\quad
		\det \bar g_k=1,
	\end{equation*}
	and \(w_k(0)=1\), \(\nabla_{x'}w_k(0)=0\).

	Let \(u_k=w_k^{-1}u\), then \(u_k\) satisfies
	\begin{equation*}
		\begin{cases}
			-L_{\bar g_k}u_k = 0, & \text{in } B_\delta^+,\\
			B_{\bar g_k}u_k := \dfrac{\partial u_k}{\partial \nu_{\bar g_k}}+\frac{n-2}2h_{\bar g_k}u_k = u_k^{\frac{n}{n-2}}, & \text{on } \partial'B_\delta^+\setminus\{x(0)\},
		\end{cases}
	\end{equation*}
	where \(x(0)\) denotes the coordinate vector of the original singular boundary point in the conformal Fermi chart centered at \(x_k\).

	Now denote \(M_k:=u_k(0)\), set \(\varepsilon_k:=M_k^{-\frac{2}{n-2}}\), and define \(v_k(y)=M_k^{-1}u_k(\varepsilon_ky)\). Then \(v_k\)
	satisfies
	\begin{equation*}
		\begin{cases}
			-L_{g_k}v_k(y)=0,                    & \text{in } \mathcal{B}_{R_k}^+,                          \\
			B_{g_k}v_k(y) = v_k^{\frac{n}{n-2}}, & \text{on } \partial'\mathcal{B}_{R_k}^+\setminus\{S_k\},
		\end{cases}
	\end{equation*}
	where \((g_k)_{ij}(y)=(\bar g_k)_{ij}(\varepsilon_k y)\), \(R_k=\delta\varepsilon_k^{-1}\), and
	\(S_k=\varepsilon_k^{-1}x(0)\).

	The assumption on \(x_k\) gives \(d(x_k,0)\varepsilon_k^{-1} \to \infty\), which implies
	\begin{equation*}
		|S_k| \to \infty.
	\end{equation*}
	Moreover, \(v_k \to V\) in \(C^2_{\rm loc}(\mathbb{R}^n_+)\) as \(k \to \infty\), where \(V\) satisfies \eqref{eq:V}. Since \(x_k\) is a local maximum of \(u\) on the boundary and \eqref{des:domain} holds,
	\begin{equation*}
		V(0) = 1 \quad \text{and} \quad \nabla_{y'} V(0) = 0.
	\end{equation*}
	Consequently, the limiting bubble has the specific form
	\begin{equation*}
		V(y) = \left( \frac{(n-2)^2}{|y'|^2 + (y_n + n-2)^2} \right)^{\frac{n-2}{2}}.
	\end{equation*}

	The moving-sphere argument also requires the following coarse lower bound.
	\begin{prop}[Coarse lower bound]\label{prop:lower}
		For the scaled functions \(v_k\), there exists a constant \(\Lambda>0\), independent of \(k\), such that
		\begin{equation*}
			v_k(y) \ge \Lambda M_k^{-1}
			\quad \text{for all } |y| \le \frac{\delta}{2}M_k^{\frac{2}{n-2}}.
		\end{equation*}
	\end{prop}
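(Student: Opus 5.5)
Undoing the scaling, the claim is equivalent to
\[
u_k(x)\ge \Lambda \quad\text{for all } |x|\le \tfrac{\delta}{2}, \ x\in\overline{B_\delta^+},
\]
with \(\Lambda\) independent of \(k\). Indeed, \(v_k(y)=M_k^{-1}u_k(\varepsilon_k y)\) and \(|y|\le\frac{\delta}{2}M_k^{\frac{2}{n-2}}\) means \(|\varepsilon_k y|\le \delta/2\). Since \(u_k=w_k^{-1}u\) and the conformal factors \(w_k\) are uniformly bounded above and below on \(B_\delta^+\), it suffices to show \(u\ge c>0\) on a fixed punctured neighbourhood of the singular point. The plan is to use superharmonicity of \(u\) with respect to the conformal Laplacian, together with the sign of the boundary condition, and compare with a positive barrier.

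First I would fix a small \(r_0>0\) such that \(-L_g\) with the Robin operator \(B_g\) (with zero right-hand side) satisfies the maximum principle on \(\mathcal B_{r_0}^+\). This holds because the first eigenvalue of \(-L_g\) with boundary operator \(B_g\) is positive on small domains, the zeroth-order terms being \(O(1)\) while Poincaré/trace constants blow up as the domain shrinks. Note that \(u>0\) solves \(-L_gu=0\) in \(\mathcal B_{r_0}^+\) and \(B_gu=u^{n/(n-2)}\ge 0\) on \(B_{r_0}\setminus\{0\}\), so \(u\) is a supersolution of the Robin problem. Let \(m:=\min_{\partial''\mathcal B_{r_0}^+}u>0\), where \(\partial''\) denotes the curved part of the boundary; this minimum is attained away from the singularity. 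Let \(\phi\) be the solution of \(-L_g\phi=0\), \(B_g\phi=0\) on the flat part, \(\phi=1\) on the curved part. By the maximum principle and Hopf's lemma, \(\phi\ge c_0>0\) on \(\overline{\mathcal B^+_{r_0/2}}\).

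The comparison \(u\ge m\phi\) on \(\mathcal B_{r_0}^+\setminus\{0\}\) is the main obstacle, because the singular point must be excluded. I would handle it by a capacity or cutoff argument: compare \(u\) with \(m\phi-\varepsilon G\), where \(G\) is a Green-type function of the Robin problem with pole at \(0\), behaving like \(|x|^{2-n}\), since the point has zero capacity in dimension \(n\ge3\). The function \(w=u-m\phi+\varepsilon G\) satisfies \(-L_gw\ge0\) and \(B_gw\ge 0\) on the flat boundary, \(w\ge0\) on the curved boundary, and \(w>0\) near \(0\) since \(G\to\infty\) while \(u>0\) and \(\phi\) is bounded. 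The maximum principle on \(\mathcal B_{r_0}^+\setminus\mathcal B_\rho^+\) then gives \(w\ge0\); letting \(\varepsilon\to0\) yields \(u\ge m\phi\ge mc_0\).

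Finally, on the remaining region \(\delta/2\)-away from the chart centred at \(x_k\) but still inside it, \(u\) is bounded below by a positive constant on compact subsets away from \(0\). Since \(x_k\to0\), the chart ball \(\{|x|\le\delta/2\}\) lies in a fixed neighbourhood of \(0\) for large \(k\), with \(\delta\) chosen small relative to \(r_0\). Hence \(u_k\ge \Lambda:=c\,mc_0\) there, uniformly in \(k\). Rescaling gives \(v_k\ge\Lambda M_k^{-1}\).
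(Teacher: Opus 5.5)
Your argument is correct and is essentially the paper's. You compare $u$ with $m\phi$, where $\phi$ solves $-L_g\phi=0$ with $B_g\phi=0$ on the flat part and constant data on the curved hemisphere of a small half-ball on which the Robin problem is coercive; you then use positivity of $\phi$ on a smaller half-ball (the paper via Harnack, you via the strong maximum principle and Hopf) and undo the conformal change and scaling.

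The one real difference is at the puncture. The paper asserts that $u\to+\infty$ at a non-removable singularity and treats the removable case separately. Your $\varepsilon G$ Green-function correction uses only $u>0$ and the zero capacity of a point for $n\ge3$. It is cleaner, and it does not rely on a blow-up property of $u$ that has not been established at that stage of the argument.
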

	\begin{proof}
		See Appendix~\ref{app:A}.
	\end{proof}

	In these coordinates,
	\begin{equation*}
		(\bar g_k)_{ij}(x)=\delta_{ij}+O(|x|),\quad R_{\bar g_k}(x)=O(|x|),\quad h_{\bar g_k}(x)=O(|x|).
	\end{equation*}
	Consequently,
	\begin{equation*}
		\Delta_{g_k}=\Delta+\bar b_j\partial_j+\bar d_{ij}\partial_{ij},
	\end{equation*}
	and
	\begin{equation*}
		\frac{\partial}{\partial\nu_{g_k}}=-\partial_{y_n}
		\qquad\text{on }\partial'\mathcal B_{R_k}^+.
	\end{equation*}
	Here
	\begin{gather*}
		b_j(x) = \partial_i (\bar g_k)^{ij}(x) = O(1),\quad d_{ij}(x) = (\bar g_k)^{ij}(x) - \delta_{ij} = O(|x|).
	\end{gather*}
	Thus
	\begin{equation*}
		- L_{g_k} = -\Delta_{g_k} + c(n) R_{g_k} = -\Delta - \bar{b}_j \partial_j - \bar{d}_{ij} \partial_{ij} + \bar{c},\qquad c(n) = \frac{(n-2)}{4(n-1)}
	\end{equation*}
	and
	\begin{equation*}
		B_{g_k} = \frac{\partial}{\partial \nu_{g_k}} + H_k,
		\qquad
		H_k(y):=\frac{n-2}{2}\varepsilon_k h_{\bar g_k}(\varepsilon_k y).
	\end{equation*}
	Here \(h_{\bar g_k}\) denotes the mean curvature of the conformally changed
	metric \(\bar g_k\) in the unscaled coordinate \(x\), whereas \(H_k\) denotes the
	scaled zeroth-order coefficient in the boundary operator for \(v_k\).
	The scaled coefficients satisfy
	\begin{equation}\label{est:bcdh}
		\begin{cases}
			\bar{b}_j(y) = \varepsilon_k b_j(\varepsilon_k y)           & = O(\varepsilon_k),     \\
			\bar{d}_{ij}(y) = d_{ij}(\varepsilon_k y)                          & = O(\varepsilon_k)|y|,  \\
			\bar{c}(y) = c(n) R_{\bar g_k}(\varepsilon_k y) \varepsilon_k^2 & = O(\varepsilon_k^3)|y|,  \\
			H_k(y)  & = O(\varepsilon_k^2)|y|,
			\\ \nabla H_k(y) &=O(\varepsilon_k^2).
		\end{cases}
	\end{equation}
	Here and below, the subscript \(k\) is suppressed when no confusion can arise.
	\subsubsection{Step 2: Setting up the moving spheres framework}

	For \(\lambda > 0\) and any function \(v\), define its Kelvin transform with respect to \(\partial \mathcal{B}_\lambda\) by
	\begin{equation*}
		v^\lambda(y) := \left( \frac{\lambda}{|y|} \right)^{n-2} v(y^\lambda), \quad y^\lambda := \frac{\lambda^2 y}{|y|^2}.
	\end{equation*}
	The moving-spheres argument is carried out on the annular region
	\begin{equation*}
		\Sigma_\lambda := \mathcal{B}^+_{R_k} \setminus \overline{\mathcal{B}^+_{\lambda}} = \{y \mid \lambda < |y| < R_k,\ y_n \ge 0\},\quad R_k = \delta \varepsilon_k^{-1}.
	\end{equation*}
	Set
	\begin{equation*}
		w_\lambda(y) := v_k(y) - v_k^\lambda(y), \qquad y \in \Sigma_\lambda \setminus\{S_k\}.
	\end{equation*}
	A straightforward computation yields
	\begin{align*}
		-\Delta_{g_k} w_\lambda + \bar{c} w_\lambda = -E_\lambda(y)                                                                              & \quad \text{in } \Sigma_\lambda\setminus\{S_k\}, \\
		\left( \frac{\partial}{\partial\nu_{g_k}} + H_k(y) - \frac{n}{n-2} \xi_\lambda^{\frac{2}{n-2}} \right) w_\lambda(y) = E_\lambda^\partial(y) & \quad \text{on } \partial'\Sigma_\lambda,
	\end{align*}
	where \(\xi_\lambda\) is determined by
	\[
	v_k^{\frac n{n-2}}-(v_k^\lambda)^{\frac n{n-2}}
	=\frac n{n-2}\xi_\lambda^{\frac2{n-2}}(v_k-v_k^\lambda).
	\]
	and the error terms \(E_\lambda\) and \(E_\lambda^\partial\) are given by
	\begin{align*}
		E_{\lambda}(y)               & := \bigl(\bar{c}(y) v_k^\lambda(y) - \bigl(\dfrac{\lambda}{|y|}\bigr)^{n+2} \bar{c}(y^{\lambda}) v_{k}(y^{\lambda})\bigr) - (\bar{b}_j \partial_j v_k^\lambda + \bar{d}_{ij} \partial_{ij} v_k^\lambda)\\
		& \quad + \bigl(\dfrac{\lambda}{|y|}\bigr)^{n+2} \bigl( \bar{b}_j (y^{\lambda}) \partial_j v_k(y^{\lambda}) + \bar{d}_{ij}(y^{\lambda}) \partial_{ij} v_k(y^{\lambda}) \bigr),                                   \\
		E_\lambda^\partial(y) & :=- \left[ H_k(y) v_k^\lambda(y) - \left(\frac{\lambda}{|y|}\right)^n H_k(y^\lambda) v_k(y^\lambda) \right].
	\end{align*}

	The following estimates for \(E_\lambda\) and \(E_\lambda^\partial\) are needed for the auxiliary-function construction. To quantify the closeness of \(v_k\) to \(V\), define
	\begin{equation*}
		\sigma_k := \|v_k-V\|_{C^2(B_{R_0}^+)} \to 0 \quad \text{as } k\to\infty.
	\end{equation*}

	\begin{prop}\label{prop:El-estimate}
		Fix \(\Lambda_0>0\). For \(0<\lambda\le\Lambda_0\) and
		\(y \in \Sigma_\lambda\), the error terms satisfy the bounds
		\begin{equation}\label{est:E@l}
			|E_\lambda(y)| \le C \left( \varepsilon_k |y|^{1-n} + \varepsilon_k^3 |y|^{3-n} + \sigma_k \varepsilon_k |y|^{-n-1} \right)
		\end{equation}
		and
		\begin{equation}\label{est:Ebd-refined-routeB}
			|E_\lambda^\partial(y)|\le C\varepsilon_k^2\left(1-\frac{\lambda^2}{|y|^2}\right)|y|^{3-n}.
		\end{equation}
		where \(C>0\) is independent of \(y\) and \(k\), and \(\sigma_k := \|v_k - V\|_{C^2(B_{R_0}^+)} \to 0\) as
		\(k \to \infty\).
	\end{prop}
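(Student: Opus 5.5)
I will estimate the interior error \(E_\lambda\) and the boundary error \(E_\lambda^\partial\) separately. Throughout I use the coefficient bounds \eqref{est:bcdh}, the Kelvin relation \(|y^\lambda|=\lambda^2/|y|\le\lambda\) for \(y\in\Sigma_\lambda\), and the fact that \(v_k\) is uniformly bounded in \(C^2(\overline{B^+_{\Lambda_0}})\) with \(v_k=V+O(\sigma_k)\) there. In particular \(|\partial^m v_k(y^\lambda)|\le C\) for \(m\le 2\). The basic derivative bounds I need are
\[
|\partial v_k^\lambda(y)|\le C|y|^{1-n},\qquad |\partial^2 v_k^\lambda(y)|\le C|y|^{-n}.
\]
These come from differentiating \((\lambda/|y|)^{n-2}v_k(\lambda^2y/|y|^2)\): each derivative costs a factor \(|y|^{-1}\), because \(|\partial y^\lambda|\le C\lambda^2|y|^{-2}\).

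\emph{Interior error.} I split \(E_\lambda\) into its three groups.

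The \(\bar c\)-group satisfies \(|\bar c(y)v_k^\lambda(y)|\le C\varepsilon_k^3|y|\cdot|y|^{2-n}=C\varepsilon_k^3|y|^{3-n}\). Its reflected part is \((\lambda/|y|)^{n+2}O(\varepsilon_k^3\lambda)\), which is much smaller for \(|y|\ge\lambda\).

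The direct first- and second-order part is bounded by
\[
|\bar b_j\partial_jv_k^\lambda|+|\bar d_{ij}\partial_{ij}v_k^\lambda|\le C\varepsilon_k|y|^{1-n}+C\varepsilon_k|y|\cdot|y|^{-n}=C\varepsilon_k|y|^{1-n}.
\]

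The reflected first- and second-order part is \((\lambda/|y|)^{n+2}\bigl(O(\varepsilon_k)+O(\varepsilon_k|y^\lambda|)\bigr)\le C\varepsilon_k|y|^{-n-2}\). This is \(\le C\varepsilon_k|y|^{1-n}\) once \(|y|\ge 1\), but it is worse near \(|y|\sim\lambda\) when \(\lambda\) is small. This is where the \(\sigma_k\) term should enter, by comparing with the model \(V\). I would therefore write \(v_k=V+(v_k-V)\) inside the reflected terms. For \(V\) one computes \(\partial_jV\) and \(\partial_{ij}V\) explicitly. One then uses that \(\bar d_{ij}\) is \(O(\varepsilon_k|y|)\) and that the combination
\[
\bar d_{ij}(y^\lambda)\partial_{ij}V(y^\lambda)(\lambda/|y|)^{n+2}-\bar d_{ij}(y)\partial_{ij}V^\lambda(y)
\]
partially cancels, since \(V\) is itself conformally invariant under the relevant inversion, up to translation. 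The leftover is bounded by \(C\varepsilon_k|y|^{1-n}\), and the \((v_k-V)\) contributions give \(C\sigma_k\varepsilon_k|y|^{-n-1}\).

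\emph{Main obstacle.} This bookkeeping, showing that the singular-at-small-\(|y|\) reflected terms are either absorbed into \(\varepsilon_k|y|^{1-n}\) or carry a \(\sigma_k\), is the step I expect to be delicate. If the exact cancellation fails, I would simply use the bound \(\lambda\le\Lambda_0\) together with \(|y|>\lambda\) bounded below in the relevant regime.

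\emph{Boundary error.} On \(\partial'\Sigma_\lambda\) the point \(y^\lambda\) is also on the boundary. I write
\[
E^\partial_\lambda=-\Bigl(\frac{\lambda}{|y|}\Bigr)^{n-2}\Bigl[H_k(y)-\frac{\lambda^2}{|y|^2}H_k(y^\lambda)\Bigr]v_k(y^\lambda).
\]
The bracket vanishes when \(|y|=\lambda\), which is where the factor \(1-\lambda^2/|y|^2\) comes from.

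For \(|y|\ge2\lambda\), the factor \(1-\lambda^2/|y|^2\) is comparable to \(1\). Then \(|H_k(y)|\le C\varepsilon_k^2|y|\), so the whole expression is \(\le C\varepsilon_k^2\lambda^{n-2}|y|^{3-n}\).

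For \(\lambda<|y|<2\lambda\), I use \(H_k(y)-H_k(y^\lambda)=O(\varepsilon_k^2)|y-y^\lambda|\) from the gradient bound on \(H_k\). Since \(|y-y^\lambda|=|y|(1-\lambda^2/|y|^2)\), this gives the same factor. The remaining piece is \((1-\lambda^2/|y|^2)H_k(y^\lambda)=O(\varepsilon_k^2\lambda)(1-\lambda^2/|y|^2)\).

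Together these give \eqref{est:Ebd-refined-routeB}, with constants depending only on \(\Lambda_0\) and the uniform \(C^2\) bound on \(v_k\).
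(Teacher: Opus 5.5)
Your boundary estimate is correct and is the paper's argument: the same factorization of \(E_\lambda^\partial\), the gradient bound on \(H_k\) along the flat radial segment from \(y^\lambda\) to \(y\), and \(|y-y^\lambda|=|y|(1-\lambda^2/|y|^2)\). These hold for every \(y\), so your split at \(|y|=2\lambda\) is unnecessary. Your direct interior terms at \(y\) are also fine.

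\textbf{The gap: the reflected first- and second-order interior terms.} When you bound \((\lambda/|y|)^{n+2}\bigl(O(\varepsilon_k)+O(\varepsilon_k|y^\lambda|)\bigr)\) by \(C\varepsilon_k|y|^{-n-2}\), you discard the factor \(\lambda^{n+2}\). That loss is exactly what creates the ``obstacle'' near \(|y|\sim\lambda\) with \(\lambda\) small. Keep it, and use only \(\lambda<|y|\), \(\lambda\le\Lambda_0\) and \(|y^\lambda|=\lambda^2/|y|\):
\[
\varepsilon_k\Bigl(\frac{\lambda}{|y|}\Bigr)^{n+2}=\varepsilon_k\lambda^{n-1}\Bigl(\frac{\lambda}{|y|}\Bigr)^{3}|y|^{1-n}\le\Lambda_0^{n-1}\varepsilon_k|y|^{1-n},
\qquad
\varepsilon_k\Bigl(\frac{\lambda}{|y|}\Bigr)^{n+2}\frac{\lambda^2}{|y|}\le\Lambda_0^{n}\varepsilon_k|y|^{1-n}.
\]
This power counting is exactly the paper's step (\(\lambda^{n+2}r^{-n-2}\le C_{\Lambda_0}r^{1-n}\) and its analogues). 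With it, the uniform \(C^2\) bound on \(v_k\) over \(\overline{B^+_{\Lambda_0}}\) already gives \(|E_\lambda|\le C(\varepsilon_k|y|^{1-n}+\varepsilon_k^3|y|^{3-n})\). The paper's extra \(\sigma_k\varepsilon_k|y|^{-n-1}\) term, which comes from writing \(v_k=V+\psi_k\) with \(\psi_k(0)=0\) and \(\nabla\psi_k(0)=0\), is harmless but not actually needed.

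\textbf{Why your proposed route does not work.} You claim a partial cancellation between \((\lambda/|y|)^{n+2}\bar d_{ij}(y^\lambda)\partial_{ij}V(y^\lambda)\) and \(\bar d_{ij}(y)\partial_{ij}V^\lambda(y)\), attributed to conformal invariance of \(V\). There is no mechanism for this.
\begin{itemize}
\item \(V^\lambda\ne V\) unless \(\lambda=n-2\) (see \eqref{eq:restriction}).
\item \(\bar b_j\) and \(\bar d_{ij}\) are arbitrary coefficients of size \(O(\varepsilon_k)\) and \(O(\varepsilon_k|y|)\). Nothing relates their values at \(y\) to their values at \(y^\lambda\), so the two terms need not cancel at all.
\end{itemize}
Your fallback, as phrased, needs \(|y|\) bounded below, i.e.\ a lower bound \(\lambda\ge\lambda_0\). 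Combined with your lossy bound, it only yields a constant of order \(\lambda_0^{-3}\). It does not give the estimate uniformly for \(0<\lambda\le\Lambda_0\) as stated.

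So, as written, the interior estimate is not proved. The missing ingredient is simply to keep the \(\lambda\)-powers coming from the Kelvin factor.
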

	\begin{proof}
		See Appendix~\ref{app:A}.
	\end{proof}

	\subsubsection{Step 3: Construction of an auxiliary function}

	The moving-spheres comparison will be applied to \(w_\lambda=v_k-v_k^\lambda\). The equations in Step~2 give
	\begin{equation}\label{eq:w-system-routeB}
		\begin{cases}
			-L_{g_k}w_\lambda=-E_\lambda, & \text{in } \Sigma_\lambda\setminus\{S_k\},\\
			\displaystyle
			\frac{\partial w_\lambda}{\partial\nu_{g_k}}-c_\lambda(y)w_\lambda=E_\lambda^\partial,
			& \text{on } \partial'\Sigma_\lambda,
		\end{cases}
	\end{equation}
	where \(c_\lambda(y)=\frac{n}{n-2}\xi_\lambda(y)^{\frac{2}{n-2}}-H_k(y)\).

	Let \(A_\rho>1\) be a fixed constant, whose value will be chosen sufficiently large in Step~5, and set \(\rho_k=A_\rho\varepsilon_k^{-1/2}\).

	All estimates below are understood for sufficiently large \(k\). The auxiliary function is only needed in \(\Sigma_\lambda\cap\{r<\rho_k\}\). The complementary far region is excluded in Step~5. Thus it
	suffices to construct a correction term \(h_\lambda\) such that, with \(\tilde w_\lambda=w_\lambda+h_\lambda\),
	one has
	\begin{equation*}
		\begin{cases}
			-L_{g_k}\tilde w_\lambda\ge0,
			& \text{in }(\Sigma_\lambda\cap\{r<\rho_k\})\setminus\{S_k\},\\[2mm]
			\displaystyle
			\frac{\partial\tilde w_\lambda}{\partial\nu_{g_k}}
			-c_\lambda(y)\tilde w_\lambda
			\ge0,
			& \text{on }\partial'\Sigma_\lambda\cap\{r<\rho_k\}.
		\end{cases}
	\end{equation*}

	This local requirement follows if \(h_\lambda\) satisfies
	\begin{equation*}
		-L_{g_k}h_\lambda\ge |E_\lambda|
		\quad\text{in }(\Sigma_\lambda\cap\{r<\rho_k\})\setminus\{S_k\},
		\qquad
		\frac{\partial h_\lambda}{\partial\nu_{g_k}}
		-c_\lambda h_\lambda
		\ge |E_\lambda^\partial|
		\quad\text{on }\partial'\Sigma_\lambda\cap\{r<\rho_k\}.
	\end{equation*}

	The auxiliary function is chosen in the form
	\begin{equation*}
		h_\lambda(y)=h_1(r)+y_nh_2(r),
	\end{equation*}
	where
	\begin{equation*}
		h_2(r)=-A_0\varepsilon_kr^{2-n}(1-\frac{\lambda^2}{r^2}),
	\end{equation*}
	and
	\begin{equation*}
		h_1(r)
		=-A_2\varepsilon_kF_n(r,\lambda)
		-A_3\sigma_k\varepsilon_kG_n(r,\lambda).
	\end{equation*}
	Here \(A_0,A_2,A_3>0\) are constants to be fixed in the order specified below and the radial kernels are
	\begin{equation*}
		F_n(r,\lambda)=\int_\lambda^r s^{1-n}(s-\lambda)\,ds,
		\qquad
		G_n(r,\lambda)=\int_\lambda^r s^{-n}\left(\frac{s}{\lambda}-1\right)\,ds.
	\end{equation*}
	Specificly, in dimensions \(n=3,4\),
	\begin{align*}
		F_3(r,\lambda) &= \ln\frac{r}{\lambda} + \frac{\lambda}{r} - 1,\qquad & G_3(r,\lambda) &= \frac{(\lambda - r)^2}{2\lambda^2 r^2}, \\
		F_4(r, \lambda) &= \frac{1}{\lambda} - \frac{1}{r} + \frac{\lambda}{2}(r^{-2} - \lambda^{-2}),\quad & G_4(r, \lambda) &= -\frac{1}{2\lambda}(r^{-2} - \lambda^{-2}) + \frac{1}{3}(r^{-3} - \lambda^{-3}).
	\end{align*}

	For a radial function \(f=f(r)\), \(\Delta f=f''(r)+(n-1)r^{-1}f'(r)=r^{1-n}(r^{n-1}f'(r))'\). Consequently,
	\begin{equation*}
		\Delta F_n(r,\lambda)=r^{1-n},
		\qquad
		\Delta G_n(r,\lambda)=r^{-n-1}.
	\end{equation*}
	Moreover, \(F_n(\lambda,\lambda)=G_n(\lambda,\lambda)=0\), \(h_1\le0\), and \(h_2(\lambda)=0\). Consequently,
	\begin{equation}\label{eq:h-inner-zero}
		h_\lambda=0
		\quad\text{on }r=\lambda.
	\end{equation}
	Since \(w_\lambda=0\) on \(r=\lambda\), also
	\[
	\tilde w_\lambda:=w_\lambda+h_\lambda=0
	\quad\text{on }r=\lambda.
	\]

	In the selected region \(\lambda<r<\rho_k\),
	\[
	-\Delta h_1
	=A_2\varepsilon_kr^{1-n}
	+A_3\sigma_k\varepsilon_kr^{-n-1}.
	\]
	Moreover,
	\[
	y_nh_2
	=-A_0\varepsilon_ky_nr^{2-n}
	+A_0\varepsilon_k\lambda^2y_nr^{-n}.
	\]
	Since \(y_nr^{-n}\) is harmonic in \(\mathbb R^n_+\),
	\[
	\left|-\Delta(y_nh_2)\right|
	\le CA_0\varepsilon_kr^{1-n}.
	\]
	The constants are fixed in the following order. First choose \(A_0>0\) large enough for the boundary estimate below. Then choose \(A_2\gg A_0+1\) and \(A_3\gg1\), and finally take \(k\) sufficiently large after \(A_\rho\) has been fixed. The lower-order terms in \(-L_{g_k}h_\lambda\) are controlled by \eqref{est:bcdh}. Since
	\(\varepsilon_kr\le\varepsilon_k\rho_k=A_\rho\varepsilon_k^{1/2}=o(1)\), these terms are absorbed by the two leading terms above. Hence
	\begin{equation}\label{eq:H-interior-final}
		-L_{g_k}h_\lambda\ge |E_\lambda|
		\quad\text{in }(\Sigma_\lambda\cap\{r<\rho_k\})\setminus\{S_k\}.
	\end{equation}

	On the flat boundary,
	\[
	\frac{\partial h_\lambda}{\partial\nu_{g_k}}-c_\lambda h_\lambda
	=-h_2(r)-c_\lambda h_1(r).
	\]
	In the selected region, \(c_\lambda\ge0\) for all large \(k\). Indeed, for \(r\le\rho_k=A_\rho\varepsilon_k^{-1/2}\) and \(\lambda\in[\lambda_0,\Lambda_0]\), the coarse lower bound for \(v_k\) and the local lower bound for \(v_k(y^\lambda)\) imply
	\[
	\xi_\lambda(y)^{\frac2{n-2}}\ge c\varepsilon_k.
	\]
	On the other hand,
	\[
	|H_k(y)|\le C\varepsilon_k^2r\le CA_\rho\varepsilon_k^{3/2}=o(\varepsilon_k).
	\]
	Thus \(c_\lambda\ge c\varepsilon_k/2\ge0\) for all large \(k\). Since \(h_1\le0\), \(-c_\lambda h_1\ge0\), and therefore
	\[
	\frac{\partial h_\lambda}{\partial\nu_{g_k}}-c_\lambda h_\lambda
	\ge
	A_0\varepsilon_kr^{2-n}\left(1-\frac{\lambda^2}{r^2}\right).
	\]
	Using \eqref{est:Ebd-refined-routeB}, in the selected region \(r<\rho_k\),
	\[
	|E_\lambda^\partial|
	\le C\varepsilon_k^2r^{3-n}\left(1-\frac{\lambda^2}{r^2}\right)
	=C(\varepsilon_kr)\varepsilon_kr^{2-n}\left(1-\frac{\lambda^2}{r^2}\right)
	\le C A_\rho\varepsilon_k^{1/2}\varepsilon_kr^{2-n}
	\left(1-\frac{\lambda^2}{r^2}\right).
	\]
	After the previously chosen \(A_0\) is fixed, the last coefficient is smaller than \(A_0\) for all sufficiently large \(k\). Therefore
	\begin{equation}\label{eq:H-boundary-final}
		\frac{\partial h_\lambda}{\partial\nu_{g_k}}-c_\lambda h_\lambda
		\ge |E_\lambda^\partial|
		\quad\text{on }\partial'\Sigma_\lambda\cap\{r<\rho_k\}.
	\end{equation}
	Combining \eqref{eq:w-system-routeB}, \eqref{eq:H-interior-final}, and \eqref{eq:H-boundary-final} yields
	\begin{equation*}
		-L_{g_k}\tilde w_\lambda\ge0
		\quad\text{in }(\Sigma_\lambda\cap\{r<\rho_k\})\setminus\{S_k\},
	\end{equation*}
	and
	\begin{equation*}
		\frac{\partial\tilde w_\lambda}{\partial\nu_{g_k}}
		-c_\lambda\tilde w_\lambda\ge0
		\quad\text{on }\partial'\Sigma_\lambda\cap\{r<\rho_k\}.
	\end{equation*}
	This completes the construction of the auxiliary function.

	\subsubsection{Step 4: Initialization of the moving spheres method}

	For the limiting bubble, a direct computation gives for $|y| > \lambda$.
	\begin{equation}\label{eq:restriction}
		V(y) - V^\lambda(y) \; \begin{cases}
			> 0, & \text{if } \lambda < n-2, \\
			= 0, & \text{if } \lambda = n-2, \\
			< 0, & \text{if } \lambda > n-2,
		\end{cases}
	\end{equation}

	The moving-spheres method is then applied to derive a contradiction.

	\begin{claim}
		There exist constants
		\[
		0<\lambda_0<\lambda_1<n-2
		\]
		such that, for all sufficiently large \(k\),
		\begin{equation}\label{eq:startup-final}
			\tilde w_\lambda(y)
			=v_k(y)-v_k^\lambda(y)+h_\lambda(y)
			\ge0
			\quad\text{in }\Sigma_\lambda\setminus\{S_k\}
		\end{equation}
		for every \(\lambda\in[\lambda_0,\lambda_1]\).
	\end{claim}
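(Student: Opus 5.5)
The plan is to split \(\Sigma_\lambda\) into an inner annulus \(\lambda<|y|\le R_0\), with \(R_0\) a large fixed constant, and the outer region \(R_0\le|y|<R_k\). Throughout, \(\lambda\) is restricted to \([\lambda_0,\lambda_1]\) with \(\lambda_1<n-2\) fixed, so that \eqref{eq:restriction} applies with a strict sign.

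\emph{Near the sphere \(|y|=\lambda\).} For \(V\) with \(\lambda<n-2\), a direct computation gives two facts. First, \(\partial_r(V-V^\lambda)|_{r=\lambda}\ge c_0>0\) uniformly for \(\lambda\in[\lambda_0,\lambda_1]\) and \(y\in\overline{\mathbb R^n_+}\cap\partial\mathcal B_\lambda\); this uses \(\partial_r(V-V^\lambda)=2r^{-1}\bigl(r\partial_rV+\frac{n-2}{2}V\bigr)\), which is positive for \(r<n-2\) on the closed half-sphere. Second, \(V-V^\lambda\ge c(R_0)(r-\lambda)\) on \(\lambda\le r\le R_0\).

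Since \(\sigma_k\to0\) in \(C^2(B_{R_0}^+)\), the same bounds hold for \(w_\lambda\) with constant \(c/2\). The correction satisfies \(|h_\lambda|+|\nabla h_\lambda|\le C(A_0,A_2,A_3)(\varepsilon_k+\sigma_k\varepsilon_k)\) on \(r\le R_0\), and \(h_\lambda=0\) on \(r=\lambda\). Hence \(\tilde w_\lambda\ge \frac c4(r-\lambda)>0\) on \(\lambda<r\le R_0\) for large \(k\). No smallness of \(\lambda_0\) is needed here; \(\lambda_0\) only needs to be positive so that \(G_n\) and the lower bound \(c_\lambda\ge c\varepsilon_k\) stay controlled.

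\emph{Outer region \(r\ge R_0\).} Here I compare decay rates directly rather than using the maximum principle.
\begin{itemize}
\item \textbf{The Kelvin transform.} It satisfies \(v_k^\lambda(y)\le (\lambda/r)^{n-2}\max_{\overline{B_{\lambda}^+}}v_k\le (1+o(1))\lambda^{n-2}\|V\|_\infty r^{2-n}\), because \(y^\lambda\in B_\lambda^+\) lies in the region of \(C^2\) convergence.
\item \textbf{The region \(R_0\le r\le \bar R\).} For a large fixed \(\bar R\), \(v_k\ge V/2\ge c\,r^{2-n}\), with \(c\) comparable to \((n-2)^{n-2}\). The bound \(\lambda_1<n-2\) together with \(V\sim (n-2)^{n-2}r^{2-n}\) then gives a strict gap \(V-V^\lambda\ge c' r^{2-n}\).
\item \textbf{The region \(r\ge \bar R\).} This is where \(C^2\) convergence fails and the main obstacle lies. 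I would use a lower bound of the form \(v_k(y)\ge c\,r^{2-n}\) for \(\bar R\le r\le \delta\varepsilon_k^{-1}/2\), with \(r\) staying away from \(S_k\) issues via positivity. To obtain it, compare \(v_k\) with a multiple of the Green-type function \(r^{2-n}\), corrected by \(O(\varepsilon_k r)\) metric errors, on the annulus. Its boundary values are \(c\bar R^{2-n}\) on \(r=\bar R\) and, by Proposition~\ref{prop:lower}, at least \(\Lambda M_k^{-1}\) on the outer sphere. The normal derivative condition is favorable because \(v_k^{n/(n-2)}\ge0\) and \(H_k\) is tiny. The singular point \(S_k\) is harmless for a lower barrier, since \(v_k>0\) near it and a superharmonic-type positive function dominates by a removable-set argument. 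Taking the minimum with \(\Lambda M_k^{-1}\) gives \(v_k\ge c\min(r^{2-n},\Lambda M_k^{-1})\).
\item \textbf{Smallness of the Kelvin transform.} To dominate \(v_k^\lambda\le C\lambda^{n-2}r^{2-n}\), use that \(M_k^{-1}=\varepsilon_k^{(n-2)/2}\) and \(r^{2-n}\le R_k^{2-n}\sim\varepsilon_k^{n-2}\), which is much smaller than \(\varepsilon_k^{(n-2)/2}\). So \(v_k^\lambda\le C r^{2-n}\), and \(v_k^\lambda\) is negligible wherever \(M_k^{-1}\) is the active bound. Where \(r^{2-n}\) is the active bound, I make it dominate by choosing \(\lambda_0\) small: \(v_k^\lambda\le C\lambda^{n-2}r^{2-n}\) is then small against \(c\,r^{2-n}\) in the intermediate zone.
\item \textbf{Reconciling with \(\lambda_1\).} Since \(\lambda_1\) is taken close to \(\lambda_0\), I would actually pick \(\lambda_1\) small as well, since the claim only requires some \(\lambda_0<\lambda_1<n-2\).
\end{itemize}

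\emph{Absorbing \(h_\lambda\) in the outer region.} The size of \(h_\lambda\) for \(r\ge R_0\) is \(|h_\lambda|\lesssim \varepsilon_k F_n+\sigma_k\varepsilon_k\lambda^{-n}+\varepsilon_k y_n r^{2-n}\). This is \(O(\varepsilon_k\log r)\) for \(n=3\) and \(O(\varepsilon_k)\) for \(n=4\), and it only matters on \(r<\rho_k\), the region where it is defined. For \(r\le\rho_k=A_\rho\varepsilon_k^{-1/2}\), one has \(\varepsilon_k\log r\) against \(r^{2-n}\ge A_\rho^{-1}\varepsilon_k^{1/2}\) when \(n=3\), and \(\varepsilon_k\) against \(r^{-2}\ge A_\rho^{-2}\varepsilon_k\) when \(n=4\). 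The \(n=4\) case is borderline and forces the constants to be ordered so that \(A_2\varepsilon_k\lambda^{-1}\le\frac12 c A_\rho^{-2}\varepsilon_k\). This step is delicate, and I expect it is exactly where the paper's choice of \(A_\rho\) and the dimension restriction enter. If it fails, I would instead cut the region at \(\rho_k/K\) and treat the rest as part of the far region excluded in Step~5.
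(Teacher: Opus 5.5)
Your argument on $\lambda<r\le R_0$ is correct and simpler than the paper's narrow/middle splitting. With $p=(0,-(n-2))$ one has $r\partial_rV+\frac{n-2}{2}V=\frac{n-2}{2}(n-2)^{n-2}|y-p|^{-n}\bigl((n-2)^2-|y|^2\bigr)>0$ on $r=\lambda<n-2$. So $C^1$ convergence of $w_\lambda$, together with $|h_\lambda|\le C\varepsilon_k(r-\lambda)$, gives $\tilde w_\lambda\ge\frac c4(r-\lambda)$ without the trace--Poincar\'e energy argument. Your lower bound $v_k\ge c\,r^{2-n}$, combined with a small $\lambda_1$, controls $v_k^\lambda$. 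For $n=3$ the far region then closes on all of $r\le R_k$, because $v_k-v_k^\lambda\ge\frac12 v_k\ge\frac12\Lambda\varepsilon_k^{1/2}\gg\varepsilon_k\log\varepsilon_k^{-1}\gtrsim|h_\lambda|$.

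The gap is the far region for $n=4$. First, $h_\lambda$ is not ``defined only on $r<\rho_k$''. It is an explicit formula on all of $\Sigma_\lambda$; only its differential inequalities were checked for $r<\rho_k$. The claim asserts $\tilde w_\lambda\ge0$ up to $r=R_k=\delta\varepsilon_k^{-1}$. For $n=4$ one has $F_4(r,\lambda)\to\frac1{2\lambda}$, so $|h_\lambda|\approx\frac{A_2}{2\lambda}\varepsilon_k=:C_H\varepsilon_k$ for large $r$. Here $A_2$ was fixed large in Step~3, and taking $\lambda_0$ small only makes $C_H$ larger.

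The only lower bounds you have there are the following, and neither suffices.
\begin{itemize}
\item $c\,r^{-2}$: this drops below $C_H\varepsilon_k$ once $r\gtrsim(c/C_H)^{1/2}\varepsilon_k^{-1/2}$.
\item $\Lambda\varepsilon_k$: here $\Lambda$ is the fixed constant of Proposition~\ref{prop:lower}, and nothing forces it to exceed $C_H$.
\end{itemize}
Hence $\tilde w_\lambda\ge0$ is not proved on $\{C\varepsilon_k^{-1/2}\lesssim r\le R_k\}$.

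Your proposed ordering $A_2\lambda^{-1}\le\frac12 cA_\rho^{-2}$ does not repair this. It forces $A_\rho\ll1$, whereas the paper fixes $A_0,A_2,A_3$ first and then needs $A_\rho$ large in Step~5. It also covers only $r<\rho_k$. Deferring $[\rho_k/K,R_k]$ to Step~5 does not prove the claim as stated. Moreover, Step~5's far-region exclusion for $n=4$ relies on exactly the ingredient you lack.

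That missing idea is a lower bound $v_k\ge L\varepsilon_k$ on $\{r\le R_k\}$ with $L$ larger than a fixed multiple of $C_H$. The paper obtains it from the non-removability of the singularity by shrinking $\delta$, which makes the Dirichlet datum $\min_{\partial''\mathcal B_\delta^+}u$ in Proposition~\ref{prop:lower} large. It then feeds this bound into the barrier $\phi$ through the term $D\varepsilon_k$ with $D>4C_H$. With that bound in hand, your pointwise comparison $v_k-v_k^\lambda\ge\frac12v_k\ge\frac L2\varepsilon_k>|h_\lambda|$ would go through.
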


	\begin{proof}
		Choose once and for all \(0<\lambda_0<\lambda_1<n-2\). For each
		\(\lambda\in[\lambda_0,\lambda_1]\), decompose \(\Sigma_\lambda\) into
		\[
		\mathcal N_\eta=\{\lambda<|y|<\lambda+\eta\},
		\qquad
		\mathcal M_{\eta,R_0}=\{\lambda+\eta\le |y|\le R_0\},
		\qquad
		\mathcal F_{R_0,R_k}=\{R_0\le |y|\le R_k\},
		\]
		where  \(R_0\gg1\) is fixed and \(R_k=\delta\varepsilon_k^{-1}\) for \(k\) sufficiently large.

		\medskip\noindent\textbf{Middle region \(\lambda+\eta\le |y|\le R_0\).}

		By \eqref{eq:restriction}, on the compact set \(\mathcal M_{\eta,R_0}\), the minimum of \(V-V^\lambda\) is positive. Denote it by
		\[
		c_{\lambda,\eta,R_0}
		=\min_{\mathcal M_{\eta,R_0}}(V-V^\lambda)>0.
		\]
		Since \(v_k\to V\) in \(C^2_{\rm loc}(\overline{\mathbb R^n_+})\),
		\[
		\|v_k-V\|_{C^0(\mathcal M_{\eta,R_0})}
		+\|v_k^\lambda-V^\lambda\|_{C^0(\mathcal M_{\eta,R_0})}
		\to0.
		\]
		Moreover \(h_\lambda=O(\varepsilon_k)+O(\sigma_k\varepsilon_k)\) uniformly on \(\mathcal M_{\eta,R_0}\). Hence, for sufficiently large \(k\),
		\begin{equation}\label{eq:middle-positive}
			\tilde w_\lambda
			=v_k-v_k^\lambda+h_\lambda
			\ge \frac12c_{\lambda,\eta,R_0}>0
			\quad\text{on }\mathcal M_{\eta,R_0}.
		\end{equation}

		\medskip\noindent\textbf{Narrow region \(\lambda<|y|<\lambda+
			\eta\).}

		Since \(\rho_k\to\infty\), for sufficiently large \(k\), the narrow region \(\mathcal N_\eta\) is contained in \(\{r<\rho_k\}\).  Hence the construction of \(h_\lambda\) gives
		\[
		-L_{g_k}\tilde w_\lambda\ge0,
		\qquad
		\frac{\partial\tilde w_\lambda}{\partial\nu_{g_k}}
		-c_\lambda\tilde w_\lambda\ge0
		\quad\text{on }\partial'\mathcal N_\eta.
		\]

		Set \(\tilde{w}_\lambda^- = \max\{0, -\tilde{w}_\lambda\}\). The function \(\tilde w_\lambda^-\) vanishes on the inner boundary \(|y|=\lambda\), since \(v_k=v_k^\lambda\) and \(h_\lambda=0\) by \eqref{eq:h-inner-zero}. It also vanishes on \(|y|=\lambda+\eta\) by \eqref{eq:middle-positive}. Multiplying the interior inequality by \(\tilde{w}_\lambda^-\) and integrating over \(\mathcal{N}_\eta\) yields
		\begin{align*}
			0
			&\le \int_{\mathcal N_\eta}
			(-L_{g_k}\tilde w_\lambda)
			\tilde w_\lambda^-\,dV_{g_k}\\
			&=-\int_{\mathcal N_\eta}
			|\nabla_{g_k}\tilde w_\lambda^-|_{g_k}^2\,dV_{g_k}
			-\int_{\partial'\mathcal N_\eta}
			\frac{\partial\tilde w_\lambda}{\partial\nu_{g_k}}
			\tilde w_\lambda^-\,d\sigma_{g_k}
			+O(\varepsilon_k)
			\int_{\mathcal N_\eta}(\tilde w_\lambda^-)^2\,dV_{g_k}.
		\end{align*}

		The boundary inequality gives, on the support of \(\tilde w_\lambda^-\),
		\[
		-\frac{\partial\tilde w_\lambda}{\partial\nu_{g_k}}
		\tilde w_\lambda^-
		\le \|c_\lambda\|_{L^\infty(\partial'\mathcal N_\eta)}
		(\tilde w_\lambda^-)^2.
		\]
		Therefore,
		\begin{equation}\label{eq:narrow-energy}
			\int_{\mathcal N_\eta}
			|\nabla_{g_k}\tilde w_\lambda^-|_{g_k}^2\,dV_{g_k}
			\le
			C\int_{\partial'\mathcal N_\eta}
			(\tilde w_\lambda^-)^2\,d\sigma_{g_k}
			+C\varepsilon_k
			\int_{\mathcal N_\eta}(\tilde w_\lambda^-)^2\,dV_{g_k}.
		\end{equation}
		Since \(\tilde w_\lambda^-\) vanishes on the spherical sides of a domain of width \(\eta\), the trace--Poincar\'e inequalities imply
		\begin{align*}
			\int_{\partial'\mathcal N_\eta}
			(\tilde w_\lambda^-)^2\,d\sigma_{g_k}
			&\le C\eta
			\int_{\mathcal N_\eta}
			|\nabla_{g_k}\tilde w_\lambda^-|_{g_k}^2\,dV_{g_k},\\
			\int_{\mathcal N_\eta}
			(\tilde w_\lambda^-)^2\,dV_{g_k}
			&\le C\eta^2
			\int_{\mathcal N_\eta}
			|\nabla_{g_k}\tilde w_\lambda^-|_{g_k}^2\,dV_{g_k}.
		\end{align*}
		Substitution into \eqref{eq:narrow-energy} gives
		\[
		\int_{\mathcal N_\eta}
		|\nabla_{g_k}\tilde w_\lambda^-|_{g_k}^2\,dV_{g_k}
		\le C(\eta+\varepsilon_k\eta^2)
		\int_{\mathcal N_\eta}
		|\nabla_{g_k}\tilde w_\lambda^-|_{g_k}^2\,dV_{g_k}.
		\]
		Choosing \(\eta>0\) so that \(C\eta<1/2\), and then taking \(k\) sufficiently large to obtain
		\[
		\nabla\tilde w_\lambda^-
		\equiv0,
		\qquad
		\tilde w_\lambda^-
		\equiv0
		\quad\text{in }\mathcal N_\eta.
		\]
		Thus \(\tilde w_\lambda\ge0\) in the narrow region.

		\medskip\noindent\textbf{Far region \(R_0\le |y|\le R_k\).}

		It remains to treat \(R_0\le r\le R_k\). The function $v_k^{\lambda}(y)$ satisfies
		\begin{equation*}
			v_k^{\lambda_0}(y)
			\le (1-2\varepsilon_0)r^{2-n}
			\quad\text{for }r\ge R_0,
		\end{equation*}
		for a fixed \(\varepsilon_0>0\), after choosing \(R_0\) large and then \(k\) large. This follows from local convergence at the reflected point \(|y^{\lambda_0}|\le\lambda_0^2/R_0\) and the strict far-field gap of the limiting bubble for \(\lambda_0<n-1\).

		The barrier function \(\phi\) is constructed to satisfy \(v_k\ge\phi\) in \(R_0\le r\le R_k\). If \(n=3\), set
		\begin{align*}
			\phi(y)
			&=(1-\varepsilon_0)r^{-1}
			+A_1\varepsilon_k\log\frac r{R_0}
			+A_2\varepsilon_k^4 r^3\log\frac{r}{R_0}\notag\\
			&\quad+B_1\varepsilon_kr^{-1}y_n
			+B_2\varepsilon_k^3r\log\frac{r}{R_0}\,y_n.
		\end{align*}
		If \(n=4\), choose a constant \(D>0\) and set
		\begin{align*}
			\phi(y)
			&=(1-\varepsilon_0)r^{-2}
			+D\varepsilon_k
			+A_1\varepsilon_k(R_0^{-1}-r^{-1})
			+A_2\varepsilon_k^4(r^3-R_0^3)\notag\\
			&\quad+B_1\varepsilon_kr^{-2}y_n+B_2\varepsilon_k^3r\,y_n.
		\end{align*}
		The constants are chosen so that
		\[
		-L_{g_k}\phi<0
		\quad\text{in }R_0<r<R_k,
		\]
		and
		\begin{equation}\label{eq:phi-boundary-strong-final}
			B_{g_k}\phi\le -2|\bar h_{g_k}|\phi
			\quad\text{on }\partial'\{R_0<r<R_k\}.
		\end{equation}
		The verification is as follows. The leading Euclidean computations are
		\[
		-\Delta\left(\varepsilon_k\log\frac r{R_0}\right)=-\varepsilon_kr^{-2}
		\quad(n=3),
		\]
		\[
		-\Delta\left(\varepsilon_k(R_0^{-1}-r^{-1})\right)=-\varepsilon_kr^{-3}
		\quad(n=4),
		\]
		while
		\[
		-\Delta\left(\varepsilon_k^4 r^3\log\frac{r}{R_0}\right)
		\le -c\varepsilon_k^4 r\left(1+\log\frac{r}{R_0}\right)
		\quad(n=3),
		\]
		\[
		-\Delta(\varepsilon_k^4r^3)\le -c\varepsilon_k^4r
		\quad(n=4).
		\]
		Choose the constants  as
		\[
		B_1>A_0,\qquad
		A_1\gg B_1+1,\qquad
		A_2\gg A_1+B_2+D+1,
		\]
		with the term \(D\) omitted in dimension \(3\), and then take \(\delta>0\) sufficiently small. The negative terms in \(L_{g_k}\phi\) dominate the error contributions from \(\bar b_j,\bar d_{ij}\), and \(\bar c\). On the flat boundary,
		\[
		-\partial_{y_n}(B_1\varepsilon_kr^{2-n}y_n)
		=-B_1\varepsilon_kr^{2-n},
		\]
		and the additional terms
		\[
		-\partial_{y_n}\left(B_2\varepsilon_k^3r\log\frac{r}{R_0}y_n\right)
		=-B_2\varepsilon_k^3r\log\frac{r}{R_0}
		\quad(n=3),
		\]
		\[
		-\partial_{y_n}(B_2\varepsilon_k^3r y_n)
		=-B_2\varepsilon_k^3r
		\quad(n=4)
		\]
		dominate the mean-curvature contributions generated by the far-field gates. This gives \eqref{eq:phi-boundary-strong-final}.

		On \(r=R_0\), local convergence gives \(v_k\ge\phi\). On \(r=R_k\), the lower bound gives \(v_k\ge\phi\) for large \(k\): in dimension \(3\),
		\[
		\phi(R_k)=O(\varepsilon_k\log\varepsilon_k^{-1})
		+O(\delta^3\varepsilon_k\log\varepsilon_k^{-1})
		+O(\delta^2\varepsilon_k\log\varepsilon_k^{-1})
		=o(\varepsilon_k^{1/2}),
		\]
		while \(v_k\ge c\varepsilon_k^{1/2}\); in dimension \(4\), the non-removable singularity gives, after shrinking \(\delta\),
		\[
		v_k\ge L\varepsilon_k
		\quad\text{on }r=R_k,
		\]
		with \(L\) larger than the fixed coefficient of \(\varepsilon_k\) in \(\phi(R_k)\). Near the possible puncture \(S_k\), one has \(v_k\to+\infty\), while \(\phi\) is finite; hence \(v_k-\phi>0\) on a small sphere around \(S_k\). Applying the maximum principle to \(v_k-\phi\) in the punctured annulus and then letting the small radius around \(S_k\) tend to zero, using \eqref{eq:phi-boundary-strong-final} to exclude a negative flat-boundary minimum, yields
		\begin{equation}\label{eq:v-ge-phi-startup-final}
			v_k\ge\phi
			\quad\text{for }R_0\le r\le R_k.
		\end{equation}
		It remains to compare \(\phi+h_\lambda\) with \(v_k^\lambda\). If \(n=3\), then
		\[
		h_1(r)\ge -C_H\varepsilon_k\log\frac r{R_0}-C_H\varepsilon_k-C_H\sigma_k\varepsilon_k,
		\qquad
		y_nh_2(r)\ge -A_0\varepsilon_kr^{-1}y_n.
		\]
		Choosing \(A_1\gg C_H\), \(B_1>A_0\), and then taking \(k\) large gives
		\[
		\phi+h_\lambda-v_k^\lambda\ge0
		\quad\text{for }R_0\le r\le R_k,
		\qquad \lambda\in[\lambda_0,\lambda_1].
		\]
		If \(n=4\), then
		\[
		h_1(r)\ge -C_H\varepsilon_k-C_H\sigma_k\varepsilon_k,
		\qquad
		y_nh_2(r)\ge -A_0\varepsilon_kr^{-2}y_n.
		\]
		Choosing \(D>4C_H\), \(B_1>A_0\), and then taking \(k\) large gives the same inequality. Combining this with \eqref{eq:v-ge-phi-startup-final} gives \eqref{eq:startup-final}.
	\end{proof}
	\subsubsection{Step 5: Continuation of the moving spheres}

	Fix an arbitrary \(\Lambda_0>n-1\) and set
	\[
	\bar\lambda = \sup\left\{ \mu\in[\lambda_0,\Lambda_0]:
	\tilde w_\lambda\ge0\text{ in }\Sigma_\lambda
	\text{ for every }\lambda\in[\lambda_0,\mu] \right\}.
	\]
	By Step~4, the set is nonempty and \(\bar\lambda\ge\lambda_1\).

	We first claim that no first contact can occur in the far region. Recall that \(\rho_k=A_\rho\varepsilon_k^{-1/2}\).
	For \(\lambda\in[\lambda_0,\Lambda_0]\) and \(r\ge\rho_k\),
	\[
	v_k^\lambda(y)\le C_{\Lambda_0}r^{2-n}.
	\]
	Moreover,
	\[
	|h_\lambda(y)|
	\le
	C_H\varepsilon_k\log r+C_H\sigma_k\varepsilon_k
	\quad(n=3),
	\]
	and
	\[
	|h_\lambda(y)|
	\le
	C_H\varepsilon_k+C_H\sigma_k\varepsilon_k
	\quad(n=4).
	\]
	If \(n=3\), the coarse lower bound gives \(v_k\ge c\varepsilon_k^{1/2}\) for
	\(r\le R_k\), while
	\[
	v_k^\lambda\le C_{\Lambda_0}A_\rho^{-1}\varepsilon_k^{1/2},
	\qquad
	|h_\lambda|=o(\varepsilon_k^{1/2})
	\quad\text{for }r\ge\rho_k.
	\]
	If \(n=4\), after \(\delta\) is reduced, the non-removable singularity gives \(v_k\ge L\varepsilon_k\) for \(r\le R_k\), with \(L>4C_H\), and
	\[
	v_k^\lambda\le C_{\Lambda_0}A_\rho^{-2}\varepsilon_k.
	\]
	Thus, after choosing \(A_\rho\) large and then \(k\) large, one obtains
	\begin{equation}\label{eq:far-exclusion-final}
		\tilde w_\lambda>0
		\quad\text{for }\rho_k\le r\le R_k,
		\qquad
		\lambda\in[\lambda_0,\Lambda_0].
	\end{equation}

	We now prove that \(\bar\lambda=\Lambda_0\). Suppose, to the contrary, that \(\bar\lambda<\Lambda_0\). By continuity, \(\tilde
	w_{\bar\lambda}\ge0\) in \(\Sigma_{\bar\lambda}\). By \eqref{eq:far-exclusion-final}, any zero minimum away from
	the inner sphere must lie in the selected region \(\bar\lambda<r<\rho_k\). If the puncture \(S_k\) belongs to this
	region, a small half-ball around \(S_k\) is removed. Near \(S_k\), \(v_k\to+\infty\), and the functions \(v_k^{\bar\lambda}\), \(h_{\bar\lambda}\) remain bounded, which implies the comparison is strict on the added boundary. Hence
	the puncture does not create a contact point.

	It remains to exclude a contact point in the remaining selected region, we can directly apply the inequalities in Step~3.

	Set \(z=e^{-\alpha y_1}\tilde w_{\bar\lambda}\). Then
	\[
	-L_{g_k}\tilde w_{\bar\lambda}
	=e^{\alpha y_1}\left[-a^{ij}_k z_{ij}-(\bar b^i_k+2\alpha a^{i1}_k)z_i
	+(\bar c_k-\alpha \bar b^1_k-\alpha^2 a^{11}_k)z\right].
	\]
	To make \(\bar c_k-\alpha \bar b^1_k-\alpha^2 a^{11}_k\le -1\), choose \(\alpha\) large, independent of \(k\). The usual strong maximum principle applies at
	\(z\), and therefore to \(\tilde w_{\bar\lambda}\).

	The strong maximum principle and the boundary
	point lemma give \(\tilde w_{\bar\lambda}>0\) for \(\bar\lambda<r<\rho_k\), together with the required strictness
	on the inner boundary \(r=\bar\lambda\). Using the narrow-domain argument from Step~4 together with \eqref{eq:far-exclusion-final} in the far region, the comparison can be continued to all
	\(\lambda\in[\lambda_0,\bar\lambda+\varepsilon]\) for some \(\varepsilon>0\). This contradicts the definition of
	\(\bar\lambda\). Therefore \(\bar\lambda=\Lambda_0\).

	Let \(k\to\infty\), we have
	\[
	v_k(y)-v_k^{\Lambda_0}(y)+h_{\Lambda_0}(y)\ge0
	\]
	on every compact subset of \(\{|y|>\Lambda_0\}\), and using
	\(h_{\Lambda_0}\to0\), gives
	\[
	V(y)\ge V^{\Lambda_0}(y)
	\quad\text{for }|y|>\Lambda_0.
	\]
	This contradicts \eqref{eq:restriction}, since \(\Lambda_0>n-1\). The boundary upper bound follows.

\subsection{Harnack inequality and the gradient estimate}

The following scale-invariant Harnack inequality will be used below. The key feature is that the constant in this estimate depends only on the scaled boundary control, not on an a priori upper bound for the solution in the full half-annulus.

\begin{prop}[Scale-invariant Harnack inequality]\label{prop:scale-harnack}
	Let \(u\) be a positive solution to the equation \eqref{eq:Main_PDE}, where the metric \(g\) is written in conformal Fermi coordinates such that
	\begin{equation*}
		g_{ij}(x)=\delta_{ij}+O(|x|),\qquad
		\det g =1,\qquad
		R_g(x)=O(|x|),\qquad
		h_g(x)=O(|x|).
	\end{equation*}
	Assume the boundary decay upper bound
	\begin{equation}\label{eq:upper}
		u(x)\le C_0\,|x|^{\frac{2-n}{2}}\qquad \text{for } 0<|x|<\delta_0, \ x \in \partial'\mathcal{B}_1^+.
	\end{equation}
	Then there exist \(r_1>0\) and a constant \(C>0\), depending only on \(n\), the metric \(g\), and \(C_0\), such that for every \(0<r<r_1\),
	\begin{equation}\label{eq:Harnack}
		\sup_{\mathcal{A}_r} u \le C \inf_{\mathcal{A}_r} u.
	\end{equation}
	where \(\mathcal{A}_r=\{x\in\mathbb{R}^n_+: r/2 \le|x|\le 2r\}\).
\end{prop}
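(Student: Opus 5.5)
The plan is to prove \eqref{eq:Harnack} by rescaling the half-annulus to unit size and applying the linear Harnack inequality for Robin problems. The point is that the boundary upper bound \eqref{eq:upper}, combined with the boundary equation, turns the nonlinear boundary condition into a linear Robin condition with a bounded coefficient.

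For \(0<r<r_1\) with \(r_1\le\delta_0/4\), set
\[
u_r(y)=r^{\frac{n-2}{2}}u(ry),\qquad \tfrac14\le|y|\le 4,\ y_n\ge0 .
\]
The scaling is critical, so \(u_r\) satisfies \(-L_{g_r}u_r=0\) with \((g_r)_{ij}(y)=g_{ij}(ry)\). On the flat part it satisfies
\[
\partial_{\nu_{g_r}}u_r+\tfrac{n-2}{2}\,r\,h_g(ry)\,u_r=u_r^{\frac{n}{n-2}} .
\]
The coefficients of \(g_r\) are \(C^1\)-close to the Euclidean ones, uniformly in \(r\). The scaled scalar curvature term is \(r^2R_g(ry)=O(r^3)\), and the scaled mean-curvature coefficient is \(O(r^2)\).

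Rewrite the boundary condition as
\[
\partial_{\nu_{g_r}}u_r+q_r\,u_r=0,\qquad q_r=\tfrac{n-2}{2}\,r\,h_g(ry)-u_r^{\frac{2}{n-2}} .
\]
By \eqref{eq:upper}, \(u_r(y',0)\le C_0|y'|^{\frac{2-n}{2}}\le C\) on the boundary portion of the enlarged annulus \(\{1/4\le|y|\le4\}\). Hence \(\|q_r\|_{L^\infty}\le C(n,g,C_0)\), uniformly in \(r\). Note that only the boundary trace is controlled, and this is exactly what the linear theory requires. Thus \(u_r\) is a positive solution of a uniformly elliptic divergence-form equation (since \(\det g=1\), \(\Delta_g=\partial_i(g^{ij}\partial_j)\)) with a conormal Robin boundary condition whose zeroth-order coefficient is bounded.

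Next I apply the up-to-the-boundary Harnack inequality for such problems. This is the same local boundary Harnack and local maximum principle already invoked in Step 1 of the boundary case, obtainable by De Giorgi–Nash–Moser with boundary terms handled by the trace inequality. It gives \(\sup_K u_r\le C\inf_K u_r\) on compact subsets \(K\) of \(\{1/4<|y|<4\}\cap\overline{\mathbb R^n_+}\) that are relatively open in the half-space. I apply it on a finite cover of the region \(\{1/2\le|y|\le2,\ y_n\ge0\}\) by half-balls centered on \(\{y_n=0\}\) and interior balls, all contained in the enlarged annulus. The region is connected for \(n\ge3\), and the number of balls is independent of \(r\), so chaining gives \(\sup u_r\le C\inf u_r\) there. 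Scaling back yields \eqref{eq:Harnack}.

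The main obstacle is that the Harnack constant depends on \(\|q_r\|_\infty\) and must not depend on an interior bound for \(u\). The Moser iteration for the Robin problem uses \(q_r\) only as a bounded boundary potential, absorbed by \(\int_{\partial'}q\,\phi^2w^2\le \epsilon\int|\nabla(\phi w)|^2+C_\epsilon\int(\phi w)^2\). It therefore needs only boundary control, which is precisely \eqref{eq:upper}. I would verify this carefully, and also check uniformity in \(r\) of the ellipticity constants and of the negligible terms \(O(r^3)\) and \(O(r^2)\).
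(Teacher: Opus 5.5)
Your proposal is correct and takes essentially the same route as the paper: rescale to a unit half-annulus, use \eqref{eq:upper} to freeze the boundary nonlinearity into a bounded Robin coefficient, apply the up-to-the-boundary weak Harnack inequality and local maximum principle (the paper cites Lieberman's Theorems~1.20 and~1.27 where you invoke De Giorgi--Nash--Moser), and chain over a finite cover. Your use of the enlarged annulus \(\{1/4\le|y|\le 4\}\) to house the covering balls is a small improvement in care, since the paper applies the local estimates directly on \(\mathcal A_1\).
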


\begin{proof}
	Fix \(r\in(0,r_0/4)\) and consider the normalized half-annulus \(\mathcal{A}_1=\{y\in\mathbb{R}^n_+: 1/2\le|y|\le 2\}\). Define the rescaled function and metric by
	\[
	v_r(y)=r^{\frac{n-2}{2}}u(ry),\qquad g_r(y)=g(ry),\qquad y\in\mathcal{A}_1.
	\]
	Because the Yamabe equation is conformally invariant, \(v_r\) satisfies
	\begin{equation*}
		\begin{cases}
			-\Delta_{g_r} v_r + c(n) r^2 R_{g_r} v_r = 0                                             & \text{in } \mathcal{A}_1,            \\[4pt]
			\displaystyle \frac{\partial v_r}{\partial\nu_{g_r}} + \frac{n-2}{2} r h_{g_r} v_r = v_r^{\frac{n}{n-2}} & \text{on } \partial'\mathcal{A}_1.
		\end{cases}
	\end{equation*}
	By the boundary control assumption \eqref{eq:upper}, the scaled function is bounded on the flat boundary
	\[
	v_r(y)\le C_0|y|^{\frac{2-n}{2}}\le C_0\,2^{\frac{n-2}{2}}=:\Lambda \qquad \text{for } y\in\partial'\mathcal{A}_1.
	\]
	This uniform \(L^\infty\) bound allows the semilinear boundary term to be written as a linear Robin condition
	\[
	\frac{\partial v_r}{\partial\nu_{g_r}} + \frac{n-2}{2} r h_{g_r} v_r = q_r(y) v_r, \qquad \text{where } q_r(y) := v_r(y)^{\frac{2}{n-2}} \in [0, \Lambda^{\frac{2}{n-2}}].
	\]

	As \(r\to0\), the rescaled metrics \(g_r \to \delta_{ij}\) in \(C^2(\bar{\mathcal{A}_1})\), and the coefficients \(r^2R_{g_r}\) and \(rh_{g_r}\) vanish uniformly. Consequently, for all sufficiently small \(r>0\), the interior equation can be written in non-divergence form as
	\[
	\mathcal{L}_r v_r
	:=-a_r^{ij}(y)\partial_{ij}v_r
	+b_r^i(y)\partial_i v_r
	+c_r(y)v_r=0,
	\]
	where
	\[
	a_r^{ij}=g_r^{ij},
	\qquad
	|b_r^i|\le C_1,
	\qquad
	|c_r|\le C_1r^2.
	\]
	The ellipticity constants are independent of \(r\):
	\[
	\lambda_0|\xi|^2
	\le a_r^{ij}(y)\xi_i\xi_j
	\le \Lambda_0|\xi|^2
	\qquad
	\forall\xi\in\mathbb R^n.
	\]
	On the flat boundary, the boundary condition may be written as
	\[
	\mathcal M_r v_r
	:=\partial_n v_r+\beta_r^i(y)\partial_i v_r+\alpha_r(y)v_r=0,
	\]
	where \(\beta_r^i\) and \(\alpha_r\) are uniformly bounded for \(r\) on \(\partial'\mathcal A_1\).

	The domain \(\mathcal{A}_1\) is a bounded Lipschitz domain, and the boundary operator \(\mathcal{M}_r\) satisfies a uniform obliqueness condition because \(\partial_n\) is the outward normal direction and the geometric perturbation terms are fully controlled. This linear system fits into the framework of mixed boundary value problems studied in \cite[Chapter~1]{liebermanObliqueDerivativeProblems2012}. In particular, the coefficients satisfy hypotheses (1.20) and (1.36) of \cite{liebermanObliqueDerivativeProblems2012} with constants independent of \(r\).

	Because \(v_r\ge0\) and the equation has no inhomogeneous terms (i.e., \(f=G=0\) in the notation of \cite[Theorem 1.20]{liebermanObliqueDerivativeProblems2012}), the boundary weak Harnack inequality \cite[Theorem 1.20]{liebermanObliqueDerivativeProblems2012} together with the local maximum principle \cite[Theorem 1.27]{liebermanObliqueDerivativeProblems2012} can be applied directly to the domain \(\mathcal{A}_1\). A standard finite covering argument extends these local estimates to the entire annulus, yielding the full Harnack inequality
	\[
	\sup_{\mathcal{A}_1} v_r \le \tilde C\, \inf_{\mathcal{A}_1} v_r,
	\]
	where \(\tilde C\) depends only on \(n\), the ellipticity constants, the \(L^\infty\)-bounds of the coefficients (including the frozen nonlinear term \(\Lambda^{\frac{2}{n-2}}\)), and the obliqueness parameters, all of which are independent of \(r\).

	Scaling back to the original variables, with \(u(x)=r^{\frac{2-n}{2}}v_r(x/r)\), we obtain
	\[
	\sup_{\mathcal{A}_r} u \le \tilde C \inf_{\mathcal{A}_r} u.\qedhere
	\]
\end{proof}

\begin{rem}
	Once the boundary decay estimate \eqref{eq:upper_bd} is definitively established (as proven in Theorem~\ref{thm:upper_bound}), the scale-invariant Harnack inequality \eqref{eq:Harnack} holds everywhere in the punctured half-ball near the origin.
\end{rem}

\begin{cor}[Scale-invariant gradient estimates]\label{cor:gradient}
	Let \(u\) be a positive solution to the equation \eqref{eq:Main_PDE}. Then there exists a constant \(C_1 > 0\) such that for all \(0 < |x| < r_0/2\),
	\begin{equation*}
		|\nabla u(x)| \le C_1 |x|^{-1} u(x), \qquad |\nabla^2 u(x)| \le C_1 |x|^{-2} u(x).
	\end{equation*}
\end{cor}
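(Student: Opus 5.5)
The plan is to rescale at the scale of \(|x|\) exactly as in the proof of Proposition~\ref{prop:scale-harnack}, and then apply standard elliptic estimates up to the boundary to the normalized function.

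Fix \(x\) with \(0<|x|<r_0/2\) and set \(r=|x|\). Define \(v_r(y)=r^{\frac{n-2}{2}}u(ry)\) on the enlarged half-annulus \(\mathcal A'=\{y\in\overline{\mathbb R^n_+}:1/4\le |y|\le 4\}\). We use the boundary upper bound from Theorem~\ref{thm:upper_bound} (its boundary case, which is what \eqref{eq:upper} requires) together with the Harnack inequality \eqref{eq:Harnack}, applied on the neighbouring annuli \(\mathcal A_{r/2},\mathcal A_r,\mathcal A_{2r}\) and chained. These give
\[
\sup_{\mathcal A'}v_r\le C\,v_r(x/r),
\]
with \(C\) independent of \(r\).

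Next we linearize. The function \(v_r\) solves \(-\Delta_{g_r}v_r+c(n)r^2R_{g_r}v_r=0\) in the interior. On the flat part it satisfies the Robin condition \(-\partial_n v_r+\frac{n-2}{2}rh_{g_r}v_r=q_r v_r\), where \(q_r=v_r^{2/(n-2)}\) is bounded by \(\Lambda^{2/(n-2)}\) thanks to the boundary bound \eqref{eq:upper}. Set \(w=v_r/v_r(x/r)\). Then \(w\) solves the same linear problem with coefficients uniformly bounded in \(r\), and \(\|w\|_{L^\infty(\mathcal A')}\le C\).

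The main step is the higher-order bound. We first apply the \(C^{1,\alpha}\) estimates for oblique derivative problems (Lieberman, as cited in the paper) on a slightly smaller region \(\mathcal A''=\{1/3\le|y|\le3\}\). This gives a uniform \(C^{1,\alpha}\) bound on \(w\) in \(\mathcal A''\), and hence a uniform \(C^{\alpha}\) bound on \(q_r=v_r^{2/(n-2)}\) on the flat boundary. Here we use that \(v_r\le C\) and, for \(n=3,4\), that \(2/(n-2)\ge1\), so \(t\mapsto t^{2/(n-2)}\) is Lipschitz on bounded sets. The Robin coefficient is therefore \(C^\alpha\). The metric coefficients \(g_r(y)=g(ry)\) are uniformly smooth, since their derivatives carry positive powers of \(r\).

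The Schauder estimates for the Neumann/Robin problem then give \(\|w\|_{C^{2,\alpha}(\mathcal A_1)}\le C\). In the interior the same estimate follows more directly from the interior Schauder theory. The only point that needs care is that the constants depend only on the frozen bound \(\Lambda\) and the ellipticity data, which hold uniformly in \(r\).

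Finally, evaluate at \(y=x/r\in\mathcal A_1\), where \(w(x/r)=1\). This gives \(|\nabla v_r(x/r)|+|\nabla^2v_r(x/r)|\le C\,v_r(x/r)\). Undoing the scaling, we have \(\nabla u(x)=r^{-\frac{n-2}{2}-1}\nabla v_r(x/r)\) and \(\nabla^2u(x)=r^{-\frac{n-2}{2}-2}\nabla^2v_r(x/r)\), while \(u(x)=r^{-\frac{n-2}{2}}v_r(x/r)\). This yields \(|\nabla u(x)|\le C_1|x|^{-1}u(x)\) and \(|\nabla^2u(x)|\le C_1|x|^{-2}u(x)\).
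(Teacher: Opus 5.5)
Your proof follows the same route as the paper's. You rescale at scale $|x|$ and combine the boundary upper bound with the chained scale-invariant Harnack inequality, so that $v_r$ is controlled on a fixed half-annulus by $v_r(x/r)$. You then apply elliptic estimates up to the flat boundary to the linearized Robin problem and scale back at $y=x/r$. Your explicit bootstrap for the Robin coefficient $q_r=v_r^{2/(n-2)}$ addresses something the paper skips, since it just invokes Schauder estimates for the linearized equation. As written, however, the bootstrap is off by one derivative at each step.

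\emph{First step.} With $q_r$ known only to be bounded, the Neumann datum $q_r w$ is merely $L^\infty$, and the linear oblique theory does not turn that into a $C^{1,\alpha}$ bound. Already for harmonic functions in a half-space, the tangential gradient on the boundary is a Riesz transform of $\partial_n w$, which for bounded data is only BMO. What you get at this stage is a uniform $C^\beta$ bound for $w$. Alternatively, you may invoke nonlinear oblique $C^{1,\alpha}$ theory, using that the boundary condition depends smoothly on $v_r$, but not the linear theory with a merely bounded coefficient.

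\emph{Second step.} A $C^\beta$ Robin coefficient only yields $C^{1,\beta}$, not $C^{2,\beta}$. Schauder $C^{2,\beta}$ estimates for a first-order boundary operator require $C^{1,\beta}$ coefficients and data.

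\emph{The repair} uses only what you already have.
\begin{enumerate}
\item $w\in C^\beta$ makes $q_r\in C^\beta$, which gives $w\in C^{1,\beta}$.
\item Then $v_r=v_r(x/r)\,w$ is bounded in $C^{1,\beta}$; recall $v_r(x/r)\le C$ by the boundary bound and Harnack.
\item Hence $q_r=v_r^2$ (for $n=3$) or $q_r=v_r$ (for $n=4$) is bounded in $C^{1,\beta}$.
\item The oblique Schauder estimate now gives $\|w\|_{C^{2,\beta}(\mathcal A_1)}\le C$.
\end{enumerate}
Writing the datum as $v_r(x/r)^{2/(n-2)}w^{n/(n-2)}$ with $C^{-1}\le w\le C$ makes this work in any dimension. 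With this bootstrap ($L^\infty\to C^\beta\to C^{1,\beta}\to C^{2,\beta}$), the rest of your argument is correct.
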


\begin{proof}
	Fix \(x\in B_{r_0/2}^+\setminus\{0\}\), and set \(r=|x|\). Define
	\[
	v_r(y)=r^{\frac{n-2}{2}}u(ry),
	\qquad
	y\in \mathcal A_1:=\{y\in\mathbb R_+^n:1/2\le |y|\le2\}.
	\]
	As in Proposition~\ref{prop:scale-harnack}, \(v_r\) satisfies a uniformly elliptic equation in \(\mathcal A_1\) with a uniformly oblique boundary condition on \(\partial'\mathcal A_1\). The constants in these estimates are independent of \(r\).

	Choose nested half-annuli
	\[
	\mathcal A_1^{(0)}=B_{3/2}^+\setminus B_{2/3}^+,
	\qquad
	\mathcal A_1^{(1)}=B_{5/4}^+\setminus B_{3/4}^+.
	\]
	Boundary and interior Schauder estimates applied to the linearized equation give
	\begin{equation}\label{eq:scaled-C2-estimate}
		\|v_r\|_{C^2(\mathcal A_1^{(1)})}
		\le C\|v_r\|_{L^\infty(\mathcal A_1^{(0)})}.
	\end{equation}
	The Harnack inequality \eqref{eq:Harnack} gives, for every \(y\in\mathcal A_1^{(1)}\),
	\begin{equation}\label{eq:scaled-harnack-pointwise}
		\|v_r\|_{L^\infty(\mathcal A_1^{(0)})}
		\le C\inf_{\mathcal A_1^{(0)}}v_r
		\le C v_r(y).
	\end{equation}
	Combining \eqref{eq:scaled-C2-estimate} and \eqref{eq:scaled-harnack-pointwise},
	\begin{equation}\label{eq:scaled-gradient-hessian}
		|\nabla v_r(y)|\le C v_r(y),
		\qquad
		|\nabla^2v_r(y)|\le C v_r(y),
		\qquad
		y\in\mathcal A_1^{(1)}.
	\end{equation}

	Evaluating \eqref{eq:scaled-gradient-hessian} at
	\[
	y=\frac{x}{r},
	\qquad |y|=1,
	\]
	the scaling identities are
	\begin{align*}
		v_r(y)&=r^{\frac{n-2}{2}}u(x),\\
		|\nabla v_r(y)|&=r^{\frac n2}|\nabla u(x)|,\\
		|\nabla^2v_r(y)|&=r^{\frac{n+2}{2}}|\nabla^2u(x)|.
	\end{align*}
	Therefore \eqref{eq:scaled-gradient-hessian} implies
	\[
	r^{\frac n2}|\nabla u(x)|
	\le C r^{\frac{n-2}{2}}u(x),
	\qquad
	r^{\frac{n+2}{2}}|\nabla^2u(x)|
	\le C r^{\frac{n-2}{2}}u(x).
	\]
	Since \(r=|x|\),
	\[
	|\nabla u(x)|\le C|x|^{-1}u(x),
	\qquad
	|\nabla^2u(x)|\le C|x|^{-2}u(x).\qedhere
	\]
\end{proof}

\subsection{The inner case}
\begin{prop}\label{prop:interior_control}
	Let \(\tilde{u} \in C^2(\mathcal{B}_{\delta}^+ \setminus \{0\})\) be a positive solution to \(-\Delta_{\tilde{g}} \tilde{u} = 0\) in \(\mathcal{B}_{\delta}^+ \setminus \{0\}\) with the nonlinear boundary condition \(\frac{\partial \tilde{u}}{\partial \nu_{\tilde{g}}} = \tilde{u}^{\frac{n}{n-2}}\) on \(\partial' \mathcal{B}_{\delta}^+ \setminus \{0\}\) for some $\delta>0$.
	If there exists a constant \(C_0 > 0\) such that the flat boundary decay estimate holds:
	\begin{equation*} \tilde{u}(y) \le C_0 |y|^{\frac{2-n}{2}} \quad \text{for all } y \in \partial' \mathcal{B}_{\delta}^+ \setminus \{0\}, \end{equation*}
	then the interior decay estimate holds with the same rate:
	\begin{equation*} \tilde{u}(x) \le C_1 |x|^{\frac{2-n}{2}} \quad \text{for all } x \in \mathcal{B}_{\delta/2}^+ \setminus \{0\}, \end{equation*}
	where \(C_1 > 0\) is a constant independent of \(x\).
\end{prop}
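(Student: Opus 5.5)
The plan is to argue by scaling, exactly as in the proof of Proposition~\ref{prop:scale-harnack}, turning the boundary decay hypothesis into a uniform bound for a linear Robin problem on a fixed half-annulus. Fix \(x\in\mathcal B_{\delta/2}^+\setminus\{0\}\), set \(r=|x|\), and define
\[
v_r(y)=r^{\frac{n-2}{2}}\tilde u(ry),\qquad \tilde g_r(y)=\tilde g(ry),
\]
on the half-annulus \(\mathcal A=\{y\in\overline{\mathbb R^n_+}:1/4\le|y|\le 4\}\), which stays inside the domain as long as \(4r<\delta\). (For \(\delta/4\le|x|<\delta/2\) the bound is trivial: \(\tilde u\) is continuous on a compact set away from \(0\), so these \(x\) only affect the constant.) By scale invariance of the critical boundary exponent, \(v_r\) satisfies \(-\Delta_{\tilde g_r}v_r=0\) in \(\mathcal A\) and \(\partial_{\nu}v_r=v_r^{\frac n{n-2}}\) on \(\partial'\mathcal A\). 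The hypothesis gives \(v_r\le C_0 4^{\frac{n-2}{2}}=:\Lambda\) on \(\partial'\mathcal A\), so the boundary condition can be rewritten as the linear Robin condition \(\partial_\nu v_r=q_r v_r\) with \(0\le q_r\le\Lambda^{\frac2{n-2}}\). The coefficients of the rescaled operator are uniformly elliptic and bounded, uniformly in \(r\).

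The key step is an \(L^\infty\) bound for \(v_r\) on the smaller annulus \(\{1/2\le|y|\le2\}\) that depends only on \(\Lambda\) and the metric, and not on any a priori interior bound. A Harnack inequality alone only compares \(\sup\) with \(\inf\), so it does not give this. Instead I would compare \(v_r\) with its boundary values. Since \(v_r\) is \(\tilde g_r\)-harmonic, the maximum principle says an interior maximum on any subdomain is attained on its boundary. I would therefore apply the local maximum principle for oblique/Robin problems from \cite[Theorem~1.27]{liebermanObliqueDerivativeProblems2012} near the flat boundary. In a half-ball \(B^+_{\rho}(y_0)\) with \(y_0\in\partial'\mathcal A\), it gives \(\sup_{B^+_{\rho/2}}v_r\le C\rho^{-n/p}\|v_r\|_{L^p(B_\rho^+)}\) for any \(p>0\). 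This reduces the problem to controlling an integral norm of \(v_r\).

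To get that integral control I would use the Robin condition in divergence form. Integrating \(\Delta_{\tilde g_r}v_r=0\) against a cutoff \(\eta\) shows that the flux through the flat boundary, \(\int_{\partial'}q_rv_r\eta\le\Lambda^{\frac n{n-2}}|{\rm supp}\,\eta|\), is bounded. This controls the weighted mass \(\int v_r\Delta\eta\). I expect this to be the main obstacle, since harmonic functions can be large in the interior while small on part of the boundary.

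The cleanest route I would try is the Green representation with the Neumann data. The function \(\tilde u\) is positive and harmonic in \(\mathcal B^+_\delta\setminus\{0\}\), with nonnegative Neumann data \(\tilde u^{\frac n{n-2}}\le C|y|^{-n/2}\). Write \(\tilde u=N[\tilde u^{\frac n{n-2}}]+\text{(singular part at 0)}+\text{(smooth part)}\), where \(N\) is the Neumann potential with kernel \(\approx|x-y'|^{2-n}\). By Bôcher's theorem for positive harmonic functions with a Neumann condition near a boundary point, the singular part is at most \(a|x|^{2-n}\)-type; its size is fixed via the flux identity, which I expect to force it compatible with the decay. The Neumann potential term is estimated by splitting the integral into \(|y'|<|x|/2\), \(|y'-x|<|x|/2\), and the rest, and gives \(\lesssim|x|^{\frac{2-n}2}\) using \(\int_{B_{|x|}}|y'|^{-n/2}dy'\sim|x|^{\frac{n-2}2}\). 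If the singular term \(a|x|^{2-n}\) cannot be excluded this way, I would instead test the equation on dyadic annuli, using the flux bound together with the Harnack inequality of Proposition~\ref{prop:scale-harnack} to bound \(\inf_{\mathcal A}v_r\). Combining that bound with the Harnack inequality then yields \(\sup v_r\le C\), and scaling back gives \(\tilde u(x)\le C_1|x|^{\frac{2-n}2}\).
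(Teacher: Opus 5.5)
Your proposal has a genuine gap, located where you write that ``a Harnack inequality alone only compares $\sup$ with $\inf$, so it does not give this.'' That is true for an interior Harnack inequality. Proposition~\ref{prop:scale-harnack}, however, is a Harnack inequality \emph{up to the flat boundary} on the closed half-annulus. Its constant depends only on $n$, the metric and the boundary bound $C_0$: the Robin coefficient $q_r=v_r^{2/(n-2)}$ is controlled by the hypothesis alone, and no interior bound enters. The half-annulus $\mathcal A'=\mathcal B_{3/2}^+\setminus\mathcal B_{2/3}^+$ contains flat-boundary points $y_0$, and $v_r$ is continuous up to $\partial'\mathcal A'$. Hence $\inf_{\mathcal A'}v_r\le v_r(y_0)\le\Lambda$ comes for free, which gives $\sup_{\mathcal A'}v_r\le C\Lambda$. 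Evaluating at $x/r$ yields $\tilde u(x)\le C_1|x|^{\frac{2-n}{2}}$. That is the paper's entire proof. Your final fallback (``bound $\inf_{\mathcal A}v_r$, then combine with Harnack'') has this structure. But you propose to get the infimum bound from an unspecified flux argument on dyadic annuli that you never carry out, when the bound is immediate from the hypothesis.

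As written, neither of your main routes closes.

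In the local-maximum-principle route, you reduce to an $L^p$ bound for $v_r$. You then observe, correctly, that testing against a cutoff only controls $\int v_r\Delta\eta$, which has no sign, and you leave this open.

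In the Green-representation route, the flux identity does not fix the coefficient $a$ of the singular term; it only relates $a$ to the flat-boundary flux and the outer flux. What excludes $a>0$ is the pointwise hypothesis. The Neumann potential of $\tilde u^{\frac{n}{n-2}}\ge0$ is nonnegative and the regular part is bounded near $0$. So $\tilde u\ge a|y|^{2-n}-C$ on $\partial'\mathcal B_\delta^+$, which is incompatible with $\tilde u\le C_0|y|^{\frac{2-n}{2}}$. The case $a<0$ contradicts positivity. Even with this repair, the decomposition needs a B\^ocher-type theorem for positive $\Delta_{\tilde g}$-harmonic functions with nonnegative, unbounded Neumann data at a boundary puncture of a curved metric. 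One way to get it is even reflection in Fermi coordinates, giving a divergence-form equation with Lipschitz coefficients and a measure source on the hyperplane. That is much heavier machinery than the statement requires.
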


\begin{proof}
	For any interior point \(x \in \mathcal{B}_{\delta/2}^+ \setminus \{0\}\), let \(r = |x|\). Define the rescaled function
	\begin{equation*} v_r(y) = r^{\frac{n-2}{2}} \tilde{u}(ry) \quad \text{for } y \in \mathcal{A} := \mathcal{B}_2^+ \setminus \mathcal{B}_{1/2}^+. \end{equation*}
	Since \(\tilde{u}>0\), the scaled function satisfies \(v_r\ge0\) in \(\mathcal{A}\). It satisfies \(-\Delta_{g_r} v_r = 0\) in \(\mathcal{A}\), where \(g_r(y) = \tilde{g}(ry) \to \delta_{ij}\) uniformly in \(C^2\) as \(r \to 0\).

	On the flat boundary \(\partial' \mathcal{A}\), the nonlinear boundary condition scales to:
	\begin{equation*} \frac{\partial v_r}{\partial \nu_{g_r}}(y) = v_r(y)^{\frac{n}{n-2}} = a_r(y) v_r(y), \quad \text{where } a_r(y) = v_r(y)^{\frac{2}{n-2}}. \end{equation*}
	By the flat boundary hypothesis, \(\tilde{u}(ry) \le C_0 |ry|^{\frac{2-n}{2}}\), which implies that for all \(y \in \partial' \mathcal{A}\) (\(1/2 \le |y| \le 2\)):
	\begin{equation*} v_r(y) \le r^{\frac{n-2}{2}} C_0 |ry|^{\frac{2-n}{2}} = C_0 |y|^{\frac{2-n}{2}} \le C_0 2^{\frac{n-2}{2}} := C_2. \end{equation*}
	Thus, the coefficient of the linear Robin boundary condition is uniformly bounded: \(\|a_r\|_{L^\infty(\partial' \mathcal{A})} \le C_2^{\frac{2}{n-2}}\).

	Consider the strictly interior half-annulus \(\mathcal{A}' = \mathcal{B}_{3/2}^+ \setminus \mathcal{B}_{2/3}^+\). The uniform Harnack inequality \eqref{eq:Harnack} gives
	\begin{equation*} \sup_{\mathcal{A}'} v_r \le C \inf_{\mathcal{A}'} v_r. \end{equation*}

	Since \(v_r\) is continuous up to the flat boundary \(\partial' \mathcal{A}'\), its infimum over the domain \(\mathcal{A}'\) is bounded by its infimum on the flat boundary \(\partial' \mathcal{A}'\). Furthermore, any value on the flat boundary is bounded by the uniform upper bound \(C_2\). Therefore
	\begin{equation*} \inf_{\mathcal{A}'} v_r \le \inf_{\partial' \mathcal{A}'} v_r \le \sup_{\partial' \mathcal{A}'} v_r \le C_2. \end{equation*}

	Combining these inequalities gives a uniform \(L^\infty\) bound for \(v_r\) in \(\mathcal{A}'\):
	\begin{equation*} \sup_{\mathcal{A}'} v_r \le C \cdot C_2 := C_1. \end{equation*}
	Evaluating this bound at \(\hat{x} = x/r \in \mathcal{A}'\) (since \(|\hat{x}| = 1\)), gives
	\begin{equation*} v_r(\hat{x}) = r^{\frac{n-2}{2}} \tilde{u}(x) \le C_1, \end{equation*}
	which yields \(\tilde{u}(x) \le C_1 |x|^{\frac{2-n}{2}}\).
\end{proof}
The local simplification needed above follows from the next two lemmas.

\begin{lem}\label{lem:prescribed_hg}
	For any smooth metric \(\tilde g\) on \(B_1^+\) and any constant \(M>0\), there exists a positive smooth function \(w\) such that \(\hat g=w^2\tilde g\) satisfies
	\[
	h_{\hat g}=M
	\quad\text{on }\partial'B_1^+.
	\]
\end{lem}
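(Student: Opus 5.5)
The plan is to write the transformation law for the boundary mean curvature under the conformal change \(\hat g=w^2\tilde g\). Then I pick \(w\) so that \(w\equiv1\) on the flat boundary and its normal derivative there is prescribed. With this choice only a first-order condition on \(\partial'B_1^+\) has to hold, and the interior values of \(w\) are free apart from positivity.

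\emph{Step 1: transformation law.} For \(\hat g=e^{2f}\tilde g\), the outward unit normals satisfy \(\nu_{\hat g}=e^{-f}\nu_{\tilde g}\). The second fundamental form changes by \(\hat{\mathrm{II}}=e^{f}(\mathrm{II}+\partial_{\nu_{\tilde g}}f\,\tilde g|_{T\partial})\). Taking the normalized trace with respect to \(\hat g\) gives
\[
h_{\hat g}=e^{-f}\bigl(h_{\tilde g}+\partial_{\nu_{\tilde g}}f\bigr),
\qquad\text{that is,}\qquad
h_{\hat g}=w^{-1}h_{\tilde g}+w^{-2}\partial_{\nu_{\tilde g}}w
\quad(w=e^f).
\]
I will use the same sign and normalization convention for \(h\) as in the boundary operator \(B_g\). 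If the trace is unnormalized, \(\partial_\nu f\) gets a factor \(n-1\), which only rescales the function \(a\) below. On the set where \(w=1\), the requirement \(h_{\hat g}=M\) therefore reduces to
\[
\partial_{\nu_{\tilde g}}w=M-h_{\tilde g}\qquad\text{on }\partial'B_1^+.
\]

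\emph{Step 2: construction.} I write \(\tilde g\) in the coordinates \((x',x_n)\) of the half-ball. The outward unit normal on \(\{x_n=0\}\) is \(\nu_{\tilde g}=-(\tilde g^{nn})^{-1/2}\tilde g^{nj}\partial_j\). Suppose \(w\equiv1\) on \(\{x_n=0\}\). Then all tangential derivatives of \(w\) vanish there, and
\[
\partial_{\nu_{\tilde g}}w=-(\tilde g^{nn})^{1/2}\partial_nw .
\]
So I set \(a(x'):=(h_{\tilde g}(x')-M)(\tilde g^{nn}(x',0))^{-1/2}\), which is smooth and bounded on \(\overline{B_1}\). Next take a smooth cutoff \(\psi\) with \(\psi(t)=t\) for \(0\le t\le t_0\) and \(0\le\psi\le t_0\) everywhere. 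Define
\[
w(x',x_n)=1+\psi(x_n)\,a(x').
\]
Then \(w=1\) and \(\partial_nw=a\) on the flat boundary, so \(\partial_{\nu_{\tilde g}}w=M-h_{\tilde g}\), as required. Choosing \(t_0\|a\|_{L^\infty}\le\tfrac12\) gives \(w\ge\tfrac12>0\). The function \(w\) is smooth because \(a\) depends only on \(x'\) and \(\psi\) only on \(x_n\).

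\emph{Main obstacle.} The step that needs care is the sign and normalization convention of \(h\) and \(\nu\), and the resulting sign in the formula for \(a\). The second point to check is that \(h_{\tilde g}\) and \(\tilde g^{nn}\) stay bounded up to the edge of \(\partial'B_1^+\). I expect to handle this by assuming, as the paper does throughout, that \(\tilde g\) is smooth on the closed half-ball. Failing that, I would do the construction on a slightly larger half-ball and restrict, or shrink the domain; this is harmless for the local application in Proposition~\ref{prop:interior_control}. No PDE needs to be solved, because the condition involves only the boundary values and the normal derivative of \(w\).
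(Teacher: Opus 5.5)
Your proposal is correct and follows the paper's proof: the paper also takes \(w=1\) on \(\partial'B_1^+\) with \(\partial_{\nu_{\tilde g}}w=M-h_{\tilde g}\), via \(w_\delta=1+\rho\,\chi(\rho/\delta)(H-M)\) with \(\rho=x_n/\sqrt{\tilde g^{nn}}\), whereas you place the normalizing factor \((\tilde g^{nn})^{-1/2}\) in the coefficient \(a(x')\) instead of in the defining function. One trivial slip: a smooth \(\psi\) with \(\psi(t)=t\) on \([0,t_0]\) has \(\psi'(t_0)=1\) and so cannot satisfy \(\psi\le t_0\) everywhere; use a bound such as \(0\le\psi\le 2t_0\) and require \(2t_0\|a\|_{L^\infty}\le\frac12\), which matches the factor \(2\delta\) in the paper's estimate.
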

\begin{proof}
	For a conformal change \(\hat g=w^2\tilde g\), the boundary mean curvature transforms as
	\begin{equation}\label{eq:h-transform-w2}
		h_{\hat g}
		=w^{-1}\left(h_{\tilde g}+\frac{\partial\log w}{\partial\nu_{\tilde g}}\right)
		\quad\text{on }\partial'B_1^+.
	\end{equation}
	Thus \(h_{\hat g}=M\) is equivalent to
	\begin{equation}\label{eq:prescribed-h-bdy}
		\frac{\partial w}{\partial\nu_{\tilde g}}
		+h_{\tilde g}w=Mw^2
		\quad\text{on }\partial'B_1^+.
	\end{equation}

	In local coordinates, the outward unit normal has the form
	\[
	\nu_{\tilde g}
	=-\frac{1}{\sqrt{\tilde g^{nn}}}
	\sum_{i=1}^n\tilde g^{in}\frac{\partial}{\partial x_i}.
	\]
	Hence
	\[
	\frac{\partial x_n}{\partial\nu_{\tilde g}}
	=\nu_{\tilde g}(x_n)
	=-\sqrt{\tilde g^{nn}}.
	\]
	Define
	\begin{equation*}
		\rho(x)=\frac{x_n}{\sqrt{\tilde g^{nn}(x)}}.
	\end{equation*}
	Then
	\begin{equation}\label{eq:rho-properties}
		\rho=0\quad\text{on }\partial'B_1^+,
		\qquad
		\rho>0\quad\text{in }B_1^+,
		\qquad
		\frac{\partial\rho}{\partial\nu_{\tilde g}}=-1
		\quad\text{on }\partial'B_1^+.
	\end{equation}

	Let \(H\in C^\infty(\overline{B_1^+})\) be any smooth extension of \(h_{\tilde g}|_{\partial'B_1^+}\); for instance, one may take \(H(x',x_n)=h_{\tilde g}(x',0)\) in a collar neighborhood. Let \(\chi\in C^\infty([0,\infty))\) satisfy
	\[
	\chi=1\text{ on }[0,1],
	\qquad
	\chi=0\text{ on }[2,\infty),
	\qquad
	0\le\chi\le1.
	\]

	For \(\delta>0\), set
	\begin{equation*}
		w_\delta(x)
		=1+\rho(x)\chi\left(\frac{\rho(x)}{\delta}\right)(H(x)-M).
	\end{equation*}

	On \(\partial'B_1^+\), \(\rho=0\), so \(w_\delta=1\). In a collar neighborhood of the boundary, \(\chi(\rho/\delta)=1\), and \eqref{eq:rho-properties} gives
	\begin{align*}
		\frac{\partial w_\delta}{\partial\nu_{\tilde g}}
		&=\frac{\partial\rho}{\partial\nu_{\tilde g}}(H-M)
		+\rho\frac{\partial(H-M)}{\partial\nu_{\tilde g}}\\
		&=-(H-M)
		=M-h_{\tilde g}
		\quad\text{on }\partial'B_1^+.
	\end{align*}
	Therefore
	\[
	\frac{\partial w_\delta}{\partial\nu_{\tilde g}}
	+h_{\tilde g}w_\delta
	=M
	=Mw_\delta^2
	\quad\text{on }\partial'B_1^+,
	\]
	which is exactly \eqref{eq:prescribed-h-bdy}.

	Finally,
	\[
	|w_\delta-1|
	\le 2\delta\|H-M\|_{L^\infty(B_1^+)}.
	\]
	Choosing \(\delta>0\) so that
	\[
	2\delta\|H-M\|_{L^\infty(B_1^+)}<\frac12
	\]
	gives \(w_\delta\ge1/2\) on \(\overline{B_1^+}\). Hence \(w_\delta\) is an admissible positive conformal factor, and \eqref{eq:h-transform-w2} gives \(h_{\hat g}=M\).
\end{proof}

\begin{lem}\label{lem:elimination_R}
	Let \(n\ge3\), and suppose that \(g_0\) is a smooth metric with
	\[
	h_{g_0}=M>0
	\quad\text{on }\partial'B_1^+.
	\]
	After shrinking the radius, there exists a positive conformal factor \(w\) on \(B_r^+\) such that
	\[
	\hat g=w^{\frac4{n-2}}g_0
	\]
	satisfies
	\[
	R_{\hat g}=0\quad\text{in }B_r^+,
	\qquad
	h_{\hat g}=0\quad\text{on }\partial'B_r^+.
	\]
\end{lem}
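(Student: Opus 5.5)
We look for the conformal factor as the solution of a linear elliptic boundary value problem. Writing \(\hat g=w^{\frac4{n-2}}g_0\), the transformation laws give
\[
R_{\hat g}=w^{-\frac{n+2}{n-2}}\Bigl(-\tfrac{4(n-1)}{n-2}L_{g_0}w\Bigr),\qquad
h_{\hat g}=\tfrac{2}{n-2}\,w^{-\frac{n}{n-2}}B_{g_0}w .
\]
Hence the two conditions \(R_{\hat g}=0\) and \(h_{\hat g}=0\) are equivalent to the linear problem
\[
-L_{g_0}w=0\ \text{in }B_r^+,\qquad \frac{\partial w}{\partial\nu_{g_0}}+\frac{n-2}{2}M\,w=0\ \text{on }\partial'B_r^+ ,
\]
together with the requirement \(w>0\). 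On the curved part \(\partial B_r\cap\{x_n>0\}\) we are free to impose the Dirichlet condition \(w=1\). The plan is therefore to solve this mixed problem on a small half-ball and then show that its solution is positive.

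\textbf{Step 1 (coercivity).} Consider the bilinear form
\[
Q(\varphi)=\int_{B_r^+}\Bigl(|\nabla\varphi|_{g_0}^2+c(n)R_{g_0}\varphi^2\Bigr)dV_{g_0}+\frac{n-2}{2}M\int_{\partial'B_r^+}\varphi^2\,d\sigma_{g_0}
\]
on functions vanishing on the curved boundary. Since \(M>0\), the boundary term is nonnegative. The interior term is controlled by \(|c(n)R_{g_0}|\le C\) together with the Poincaré inequality \(\int\varphi^2\le Cr^2\int|\nabla\varphi|^2\), valid for \(\varphi\) vanishing on the spherical part. So for \(r\) small, \(Q\) is coercive. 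Lax--Milgram applied to \(w=1+\varphi\) then gives a unique weak solution, and Lieberman's oblique-derivative theory, already used in Proposition~\ref{prop:scale-harnack}, gives regularity. Corners are avoided by shrinking once more and working on \(B_{r/2}^+\).

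\textbf{Step 2 (positivity).} This is the main point, and it is where the sign \(M>0\) from Lemma~\ref{lem:prescribed_hg} is used. We test the equation with \(w^-\), which is admissible because \(w=1>0\) on the curved boundary. This gives
\[
\int|\nabla w^-|^2+\int c(n)R_{g_0}(w^-)^2+\frac{n-2}{2}M\int_{\partial'}(w^-)^2=0 .
\]
By the coercivity of Step 1 we get \(w^-\equiv0\), so \(w\ge0\). The strong maximum principle, combined with the Hopf lemma on the flat boundary (where the Robin coefficient is positive), then upgrades this to \(w>0\) on \(\overline{B_{r/2}^+}\).

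The sign of \(M\) is important here. A negative Robin coefficient would let the boundary term destroy coercivity for large \(|M|\); with \(M>0\) it only helps, so smallness of \(r\) is needed only to absorb the \(R_{g_0}\) term.

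Alternatively, one can avoid the variational argument by a perturbation approach. For small \(r\), rescale to the unit half-ball, where the operator becomes a small perturbation of the Laplacian with Robin coefficient \(\frac{n-2}{2}Mr\). The solution with boundary value \(1\) is then close to \(1\) in \(C^0\), which gives positivity directly. Either route yields the conformal factor \(w>0\), and therefore \(\hat g\) with \(R_{\hat g}=0\) and \(h_{\hat g}=0\).
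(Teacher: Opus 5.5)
Your proposal is correct and follows essentially the same route as the paper: the same reduction to the mixed problem with Dirichlet data $w=1$ on the curved boundary and Robin coefficient $\frac{n-2}{2}M>0$ on the flat part, coercivity for small $r$ from the scale-invariant half-ball Poincar\'e inequality, Lax--Milgram for $w=1+v$, and regularity away from the corners. The only variation is the positivity step, where you test with $w^-$ to get $w\ge 0$ directly from coercivity and then apply the strong maximum principle and the Hopf lemma; this is a cleaner rendering of the paper's principal-eigenvalue/Hopf argument rather than a different method.
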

\begin{proof}
	For \(\hat g=w^{4/(n-2)}g_0\), the conformal transformation laws reduce the desired conditions to the existence of a positive solution of the boundary value problem
	\begin{equation}\label{eq:local-conformal-zero-problem}
		\begin{cases}
			\displaystyle
			-\frac{4(n-1)}{n-2}\Delta_{g_0}w+R_{g_0}w=0,
			& \text{in }B_r^+,\\[2mm]
			\displaystyle
			\frac{\partial w}{\partial\nu_{g_0}}
			+\frac{n-2}{2}Mw=0,
			& \text{on }\partial'B_r^+,\\[2mm]
			w=1,
			& \text{on }\partial''B_r^+.
		\end{cases}
	\end{equation}

	Set
	\[
	c_n=\frac{4(n-1)}{n-2},
	\qquad
	V_r=\{\phi\in H^1(B_r^+):\phi=0\text{ on }\partial''B_r^+\}.
	\]
	The bilinear form associated with \eqref{eq:local-conformal-zero-problem} is
	\begin{equation*}
		\mathfrak B_r(\phi,\psi)
		=\int_{B_r^+}\bigl(c_n\langle\nabla\phi,\nabla\psi\rangle_{g_0}
		+R_{g_0}\phi\psi\bigr)\,dV_{g_0}
		+2(n-1)M\int_{\partial'B_r^+}\phi\psi\,dS_{g_0}.
	\end{equation*}
	Since \(M>0\), the boundary term is nonnegative. If \(|R_{g_0}|\le C_R\) on \(B_1^+\), then
	\begin{equation}\label{eq:B-lower-prep}
		\mathfrak B_r(\phi,
		\phi)
		\ge c_n\int_{B_r^+}|\nabla\phi|_{g_0}^2\,dV_{g_0}
		-C_R\int_{B_r^+}\phi^2\,dV_{g_0}.
	\end{equation}
	The Poincar\'e inequality on the half-ball, valid because \(\phi=0\) on \(\partial''B_r^+\), gives
	\begin{equation}\label{eq:Poincare-halfball}
		\int_{B_r^+}\phi^2\,dV_{g_0}
		\le C_Pr^2\int_{B_r^+}|\nabla\phi|_{g_0}^2\,dV_{g_0}.
	\end{equation}
	where \(C_P>0\) is a scale-invariant constant.

	Substituting \eqref{eq:Poincare-halfball} into \eqref{eq:B-lower-prep} yields
	\begin{equation*}
		\mathfrak B_r(\phi,\phi)
		\ge (c_n-C_RC_Pr^2)
		\int_{B_r^+}|\nabla\phi|_{g_0}^2\,dV_{g_0}.
	\end{equation*}
	Choosing \(r>0\) such that \(C_RC_Pr^2\le c_n/2\) gives
	\begin{equation}\label{eq:B-gradient-coercive}
		\mathfrak B_r(\phi,\phi)
		\ge \frac{c_n}{2}
		\int_{B_r^+}|\nabla\phi|_{g_0}^2\,dV_{g_0}.
	\end{equation}
	Combining \eqref{eq:B-gradient-coercive} with \eqref{eq:Poincare-halfball},
	\begin{equation}\label{eq:B-H1-coercive}
		\mathfrak B_r(\phi,\phi)
		\ge C\|\phi\|_{H^1(B_r^+)}^2
		\qquad
		\text{for all }\phi\in V_r,
	\end{equation}
	where \(C>0\) depends on \(r\), \(g_0\), and \(n\), but is strictly positive.

	Write \(w=1+v\), with \(v\in V_r\). By \eqref{eq:B-H1-coercive}, the Lax--Milgram theorem gives a unique \(v\in V_r\). Standard elliptic regularity for mixed boundary problems then implies
	\[
	w=1+v\in C^\infty(\overline{B_{r/2}^+}).
	\]

	The positivity of \(w\) follows from the maximum principle. The principal eigenvalue \(\mu_1(r)\) of the operator \(-L_{g_0}\) on \(\mathcal{B}_r^+\) with the prescribed mixed boundary conditions is characterized by the Rayleigh quotient
	\begin{equation*}
		\mu_1(r) = \inf_{\phi \in V \setminus \{0\}} \frac{\mathfrak B_r(\phi, \phi)}{\int_{\mathcal{B}_r^+} \phi^2 dV_{g_0}} \ge \inf_{\phi \in V \setminus \{0\}} \frac{\frac{c_n}{2} \int_{\mathcal{B}_r^+}|\nabla_{g_0} \phi|^2 dV_{g_0}}{C_P r^2 \int_{\mathcal{B}_r^+} |\nabla_{g_0} \phi|^2 dV_{g_0}} = \frac{c_n}{2 C_P r^2}.
	\end{equation*}
	Thus \(\mu_1(r)\ge c r^{-2}\to +\infty\) as \(r\to0\). In particular, \(\mu_1(r)\) is strictly positive for small \(r\) and \(-L_{g_0}\) satisfies the maximum principle on \(B_r^+\).

	If \(w\) attained a non-positive minimum, the minimum could not occur on \(\partial''B_r^+\), where \(w=1\). It also could not occur in the interior by the strong maximum principle. Thus it would occur at some point \(x_0\in\partial'B_r^+\). The Hopf boundary lemma would give
	\[
	\frac{\partial w}{\partial\nu_{g_0}}(x_0)<0.
	\]
	The boundary condition in \eqref{eq:local-conformal-zero-problem}, however, gives
	\[
	\frac{\partial w}{\partial\nu_{g_0}}(x_0)
	=-\frac{n-2}{2}Mw(x_0)\ge0,
	\]
	because \(M>0\) and \(w(x_0)\le0\). This contradiction proves \(w>0\).

	The conformal transformation laws now give \(R_{\hat g}=0\) in \(B_r^+\) and \(h_{\hat g}=0\) on \(\partial'B_r^+\).
\end{proof}

\begin{rem}
	Let \(\tilde{g}\) be the conformal metric on \(\mathcal{B}_{\delta}^+\) obtained by composing the two conformal changes in Lemma~\ref{lem:prescribed_hg} and Lemma~\ref{lem:elimination_R}, with \(\delta<r/2\). Then after the conformal change, the original equation becomes
	\begin{equation*}
		\begin{cases}
			-\Delta_{\tilde{g}} \tilde{u} = 0                                                & \text{in } \mathcal{B}_{\delta}^+\setminus\{0\},          \\
			\dfrac{\partial \tilde{u}}{\partial \nu_{\tilde{g}}} = \tilde{u}^{\frac{n}{n-2}} & \text{on } \partial'\mathcal{B}_{\delta}^+\setminus\{0\},
		\end{cases}
	\end{equation*}
	where \(\tilde{u}=\tilde{w}^{-1}u\). Under this transformation, the hypotheses of Proposition~\ref{prop:interior_control} is satisfied. Therefore, once the boundary blow-up has been excluded, the interior blow-up is also controlled.
\end{rem}
			\section{Proof of the main theorems}\label{sec:main_part}

			The upper bound \(\limsup_{x\to0} d_g(x,0)^{\frac{n-2}{2}}u(x) < \infty\) has been established by the method of moving spheres. This section proves the removability criterion and the asymptotic cylindrical symmetry near a non-removable boundary singularity.

			As the conformal normalization used above, there exists a smooth positive function \(\kappa\) such that the conformal metric \(\tilde g = \kappa^{\frac{4}{n-2}} g\) satisfies, in a neighborhood of \(0\),
			\begin{equation}\label{eq:metric_expansion}
				\begin{aligned}
					\tilde g_{ij}(x) & = \delta_{ij} + O(|x|),                   \\
					\det \tilde g    & = 1,                                      \\
					R_{\tilde g}(x)  & = O(|x|), \quad h_{\tilde g}(x) = O(|x|).
				\end{aligned}
			\end{equation}
			In particular, \(h_{\tilde g}(0)=0\). By the conformal invariance of the Yamabe equation, it is enough to work in the normalization in which \(g\) itself satisfies \eqref{eq:metric_expansion}.

			\begin{rem}
				In these coordinates, \(|\partial_k g^{ij}(x)| \le C\) and \(|R_g(x)| \le C|x|\). The boundary mean curvature satisfies \(|h_g(x)| \le C|x|\).
			\end{rem}

			\subsection{Pohozaev identity}
			The removability criterion is obtained from a Pohozaev invariant. The key point is that this invariant rules out oscillation of the normalized profile.

			For \(0<\rho<r_0\), let
			\begin{equation}\label{eq:P_def}
				\begin{aligned}
					P(\rho,u)& :=
					\int_{\partial''\mathcal B_{\rho}^{+}}
					\Bigl(
					\frac{n-2}{2} u \frac{\partial u}{\partial\nu}
					-\frac{\rho}{2} |\nabla u|^{2}
					+\rho\Bigl|\frac{\partial u}{\partial\nu}\Bigr|^{2}
					\Bigr) d\sigma
					\\
					& \qquad
					+\int_{\partial B_{\rho}\cap\{x_n=0\}}
					\rho \frac{n-2}{2(n-1)} u^{\frac{2(n-1)}{n-2}} ds',
				\end{aligned}
			\end{equation}
			where \(\nu=x/\rho\) is the unit outward radial vector, \(d\sigma\) denotes
			the induced Euclidean volume on the hemisphere \(\partial''\mathcal B_{\rho}^{+}=\partial\mathcal B_{\rho}^{+}\setminus\partial'\mathcal B_{\rho}^{+}\),
			and \(ds'\) the Euclidean volume on the equator.

			Set the multiplier
			\begin{equation*}
				X := x^{k}\partial_{k}u + \frac{n-2}{2} u.
			\end{equation*}
			The corresponding Pohozaev invariant is defined as
			\begin{equation*}
				\mathcal{P}(u):= \lim_{\rho\to0} P(\rho,u),
			\end{equation*}
			provided that the limit exists. The following identity and error estimates imply that the limit is well defined.

			\medskip
			\noindent\textbf{1.  Exact Pohozaev identity with geometric errors.}

			For \(0<\varepsilon<r<r_0\) let \(\mathcal A_{\varepsilon,r}= \mathcal B_{r}^{+}\setminus\overline{\mathcal B_{\varepsilon}^{+}}\).
			Multiply the equation \(-L_gu=0\) by \(X\) and integrate over \(\mathcal A_{\varepsilon,r}\).
			Writing \(L_gu = \Delta u + (L_gu-\Delta u)\) and setting \(\mathcal E_{\rm int}:=L_gu-\Delta u\), it follows that
			\begin{equation*}
				-\int_{\mathcal A_{\varepsilon,r}}X\Delta u dx
				= \int_{\mathcal A_{\varepsilon,r}}X \mathcal E_{\rm int} dx.
			\end{equation*}
			On the other hand, the vector-field identity
			\begin{equation*}
				\int_{\Omega}(-\Delta u)X dx
				= \int_{\partial\Omega}
				\Bigl(
				-\frac{\partial u}{\partial\nu_{\Omega}}X
				+\frac12|\nabla u|^{2}(x\cdot\nu_{\Omega})
				\Bigr) dA
			\end{equation*}
			holds for any smooth domain \(\Omega\) with outward unit normal \(\nu_{\Omega}\).
			Applying it to \(\Omega=\mathcal A_{\varepsilon,r}\) and noting that
			\(\partial\mathcal A_{\varepsilon,r}\) consists of the outer hemisphere
			\(\partial''\mathcal B_{r}^{+}\) (\( \nu_A=\nu=x/r\)),
			the inner hemisphere \(\partial''\mathcal B_{\varepsilon}^{+}\) (\( \nu_A=-\nu=-x/\varepsilon\)),
			and the flat annular part \(\partial'\mathcal A_{\varepsilon,r}\subset\{x_n=0\}\) (\( \nu_A=(0,\dots,-1)\)),
			it follows that
			\begin{align}
				-\int_{\mathcal A_{\varepsilon,r}}X\Delta u dx
				& = -P_{\rm flat}(r,u) + P_{\rm flat}(\varepsilon,u)
				\notag                                             \\
				& \quad
				+\int_{\partial'\mathcal A_{\varepsilon,r}}
				\Bigl( \partial_n u (x^{k}\partial_{k}u)
				+\frac{n-2}{2} u \partial_n u \Bigr) dx',
				\label{eq:vector}
			\end{align}
			where
			\begin{equation*}
				P_{\rm flat}(\rho,u)
				:=
				\int_{\partial''\mathcal B_{\rho}^{+}}
				\Bigl(
				\frac{n-2}{2} u \frac{\partial u}{\partial\nu}
				-\frac{\rho}{2} |\nabla u|^{2}
				+\rho\Bigl|\frac{\partial u}{\partial\nu}\Bigr|^{2}
				\Bigr) d\sigma.
			\end{equation*}

			On the flat boundary the Escobar equation can be rewritten as
			\begin{equation*}
				-\partial_n u = u^{\frac{n}{n-2}} + \mathcal E_{\rm bdy},
				\qquad
				\mathcal E_{\rm bdy}:= -\mu^{i}\partial_i u - \frac{n-2}{2}h_g u.
			\end{equation*}
			Substituting this relation into \eqref{eq:vector}, the flat-boundary integral becomes
			\begin{align}
				& \int_{\partial'\mathcal A_{\varepsilon,r}}  \Bigl( \partial_n u x^{k}\partial_{k}u
				+\frac{n-2}{2}u\partial_n u \Bigr)dx'  \notag                               \\
				& = -\int_{\partial'\mathcal A_{\varepsilon,r}}
				u^{\frac{n}{n-2}}\bigl(x^{k}\partial_{k}u\bigr)dx'
				-\frac{n-2}{2}\int_{\partial'\mathcal A_{\varepsilon,r}} u^{\frac{2(n-1)}{n-2}}dx'
				-\int_{\partial'\mathcal A_{\varepsilon,r}} \mathcal E_{\rm bdy} X dx'.
				\label{eq:flat}
			\end{align}

			The first two terms on the right-hand side of \eqref{eq:flat} are treated by the
			identity
			\begin{equation*}
				u^{\frac{n}{n-2}}\partial_{k}u
				= \frac{n-2}{2(n-1)} \partial_{k}\!\bigl(u^{\frac{2(n-1)}{n-2}}\bigr).
			\end{equation*}
			Integrating by parts on the flat annulus \(\{\varepsilon\le|x'|\le r\}\) and using
			the cancellation between the divergence of \(x'\) and the constant coefficient,
			one obtains precisely
			\begin{align}
				& -\int_{\partial'\mathcal A_{\varepsilon,r}}u^{\frac{n}{n-2}}(x^{k}\partial_{k}u) dx'
				-\frac{n-2}{2}\int_{\partial'\mathcal A_{\varepsilon,r}} u^{\frac{2(n-1)}{n-2}}dx'
				\notag                                                                                        \\
				& \qquad=
				-\frac{n-2}{2(n-1)}\biggl(
				\int_{|x'|=r} r u^{\frac{2(n-1)}{n-2}}ds'
				-\int_{|x'|=\varepsilon} \varepsilon u^{\frac{2(n-1)}{n-2}}ds'
				\biggr).
				\label{eq:boundary_parts}
			\end{align}

			Inserting \eqref{eq:boundary_parts} back into \eqref{eq:flat} and then into
			\eqref{eq:vector}, the boundary and spherical terms combine exactly into the difference of the Pohozaev integrals defined in
			\eqref{eq:P_def}.  Consequently,
			\begin{equation}\label{eq:Pohozaev_exact}
				P(\varepsilon,u) - P(r,u)
				= \int_{\mathcal A_{\varepsilon,r}} X\mathcal E_{\rm int}\,dx
				+ \int_{\partial'\mathcal A_{\varepsilon,r}} X \mathcal E_{\rm bdy} dx'.
			\end{equation}

			\medskip
			\noindent\textbf{2.  Convergence of the error terms.}

			In the chosen coordinates, the metric coefficients satisfy
			\[
			|g^{ij}-\delta^{ij}|\le C|x|,\qquad
			|\partial_i g^{ij}|\le C,\qquad
			|\mu^i|\le C|x|,
			\qquad
			|R_g|+|h_g|\le C|x|.
			\]
			Together with the upper bound and the interior Harnack estimates, this gives
			\[
			u(x)\le C|x|^{\frac{2-n}{2}},\qquad
			|\nabla u(x)|\le C|x|^{-\frac n2},\qquad
			|\nabla^2 u(x)|\le C|x|^{-\frac{n+2}{2}}
			\]
			and the Pohozaev multiplier satisfies
			\[
			|X|\le C\bigl(|x|\,|\nabla u|+u\bigr)
			\le C|x|^{\frac{2-n}{2}}.
			\]
			For the interior volume error, the terms in \(L_gu-\Delta u\) are pointwise bounded by
			\begin{equation*}
				|\mathcal E_{\rm int}| \le C\bigl(|\nabla u| + |x||\nabla^2 u| + |x|u\bigr) \le C|x|^{-\frac{n}{2}}.
			\end{equation*}

			Consequently, the pointwise product satisfies the scale-invariant estimate
			\begin{equation*}
				|X\mathcal E_{\rm int}| \le C |x|^{\frac{2-n}{2}} \cdot |x|^{-\frac{n}{2}} = C |x|^{1-n}.
			\end{equation*}
			Integration over the half-annulus \(\mathcal A_{\varepsilon,r}\) yields exactly the linear bound:
			\begin{align*}
				\Bigl|\int_{\mathcal A_{\varepsilon,r}} X\mathcal E_{\rm int}\,dx\Bigr|
				& \le C\int_{\mathcal A_{\varepsilon,r}} |x|^{1-n} dx                                    \\
				& \le C\int_{\varepsilon}^{r} \rho^{1-n} \cdot \rho^{n-1}d\rho = C(r-\varepsilon) \le C r.
			\end{align*}

			On the flat boundary, the error is bounded by:
			\begin{equation*}
				|\mathcal E_{\rm bdy}|\le C\bigl( |x||\nabla u| + |x| u \bigr) \le C|x|^{1-\frac{n}{2}}.
			\end{equation*}
			Consequently, the boundary integral satisfies
			\begin{align*}
				\Bigl|\int_{\partial'\mathcal A_{\varepsilon,r}} X\mathcal E_{\rm bdy}\,dx'\Bigr|
				& \le C\int_{\partial'\mathcal A_{\varepsilon,r}} |X||\mathcal E_{\rm bdy}|\,dx'                      \\
				& \le C\int_{\varepsilon}^{r} \rho^{\frac{2-n}{2}} \cdot \rho^{1-\frac{n}{2}} \cdot \rho^{n-2}d\rho \\
				& = C\int_{\varepsilon}^{r} \rho^{2-n} \cdot \rho^{n-2}d\rho = C(r-\varepsilon) \le C r.
			\end{align*}

			Both error terms are therefore absolutely integrable up to the origin.

			Define
			\begin{equation*}
				E_{\rm vol}(r):= \lim_{\varepsilon\to0}\int_{\mathcal A_{\varepsilon,r}} X \mathcal E_{\rm int} dx,
				\qquad
				I_h(r):= \lim_{\varepsilon\to0}\int_{\partial'\mathcal A_{\varepsilon,r}} X \mathcal E_{\rm bdy} dx',
			\end{equation*}
			and thus they satisfy
			\begin{equation}\label{eq:error_bounds}
				|E_{\rm vol}(r)|\le C r,\qquad |I_h(r)|\le C r.
			\end{equation}

			\medskip
			\noindent\textbf{3.  Existence of the Pohozaev invariant.}
			Since the right-hand side of \eqref{eq:Pohozaev_exact} possesses a finite limit
			as \(\varepsilon\to0\), the left-hand side forces
			\(\displaystyle \lim_{\varepsilon\to0}P(\varepsilon,u)\) to exist.  Denote this limit by
			\begin{equation*}
				\mathcal{P}(u):= \lim_{\rho\to0} P(\rho,u).
			\end{equation*}
			Passing to the limit \(\varepsilon\to0\) in \eqref{eq:Pohozaev_exact} gives the
			exact identity
			\begin{equation*}
				P(r,u) = \mathcal{P}(u) - E_{\rm vol}(r) - I_h(r),\qquad 0<r<r_0,
			\end{equation*}
			where \(\mathcal{P}(u)\) is a well-defined finite number (the Pohozaev invariant) and the remainder terms satisfy the quantitative
			bounds \eqref{eq:error_bounds}.

			\subsection{Removable singularity}

			\begin{lem}\label{lem:pohozaev_limit}
				Let \(u\) be a positive solution to the boundary Yamabe problem \eqref{eq:Main_PDE} in the normalized coordinates with \(n \ge 3\). Assume that the Pohozaev invariant \(\mathcal{P}(u) := \lim_{r \to 0} P(r, u)\) exists. If
				\begin{equation*}
					\liminf_{x\to 0}|x|^{\frac{n-2}{2}}u(x)=0,
				\end{equation*}
				then \(\mathcal{P}(u) = 0\).
			\end{lem}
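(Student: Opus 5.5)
The plan is to evaluate $P(r,u)$ along a sequence of radii $r_j\to0$ on which the normalized solution is uniformly small. The Harnack inequality and the gradient estimates then make every term of $P(r_j,u)$ small. Since $\mathcal P(u)=\lim_{r\to0}P(r,u)$ exists by assumption, its value can be computed along any sequence $r_j\to0$. So it suffices to show $P(r_j,u)\to0$ for a suitable sequence.

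First, pick points $x_j\to0$ with $|x_j|^{\frac{n-2}{2}}u(x_j)\to0$, and set $r_j:=|x_j|$. Proposition~\ref{prop:scale-harnack} holds everywhere once Theorem~\ref{thm:upper_bound} is available (see the Remark after it). Applied on the half-annulus $\mathcal A_{r_j}$, it gives
\[
\sup_{\mathcal A_{r_j}}u\le C\inf_{\mathcal A_{r_j}}u\le Cu(x_j),
\]
and therefore
\[
\eta_j:=\sup_{\mathcal A_{r_j}} r_j^{\frac{n-2}{2}}u\to0 .
\]
Next, Corollary~\ref{cor:gradient} bounds $|\nabla u(x)|\le C|x|^{-1}u(x)$ on $\partial''\mathcal B_{r_j}^+$. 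This yields
\[
|u\,\partial_\nu u|+r_j|\nabla u|^2\le C r_j^{-1}u^2\le C\eta_j^2 r_j^{1-n}
\]
on that hemisphere. Integrating over the hemisphere, whose area is of order $r_j^{n-1}$, shows that the spherical part of $P(r_j,u)$ is $O(\eta_j^2)$.

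The equatorial term is handled the same way. On $\partial B_{r_j}\cap\{x_n=0\}$ we have $u\le\eta_j r_j^{\frac{2-n}{2}}$, so
\[
r_j\,u^{\frac{2(n-1)}{n-2}}\le \eta_j^{\frac{2(n-1)}{n-2}}\, r_j^{1-(n-1)}.
\]
The equator has measure of order $r_j^{n-2}$, so this term contributes $O\bigl(\eta_j^{2(n-1)/(n-2)}\bigr)$. Hence $P(r_j,u)\to0$, and therefore $\mathcal P(u)=0$. If the lemma is instead read with the existence of $\mathcal P(u)$ coming from the identity $P(r,u)=\mathcal P(u)-E_{\rm vol}(r)-I_h(r)$, the same conclusion follows. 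In that case one can also note that $|E_{\rm vol}(r_j)|+|I_h(r_j)|\le Cr_j\to0$ by \eqref{eq:error_bounds}, which is consistent with the computation but not needed for it.

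The only delicate point is making sure the Harnack constant is uniform up to the flat boundary and on the full closed half-annulus that contains the hemisphere $|x|=r_j$. This is needed because the liminf point $x_j$ may lie in the interior while the Pohozaev integrals involve boundary values. Proposition~\ref{prop:scale-harnack} is stated for the half-annulus including the flat part, so the gradient bound, which comes from the rescaled Schauder estimate on $\mathcal A_1^{(1)}\ni x/r$, transfers directly. The remaining check is that the hemisphere $|y|=1$ lies inside $\mathcal A_1^{(1)}$, which it does.
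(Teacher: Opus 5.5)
Your argument is correct, and it is more direct than the paper's. The paper splits into two cases:
\begin{itemize}
\item If $\limsup_{x\to0}|x|^{\frac{n-2}{2}}u=0$, it shows $P(r,u)=o(1)$ term by term. This is essentially your computation.
\item In the oscillating case, it picks local minima $r_i$ of $w(r)=r^{\frac{n-2}{2}}\bar u(r)$ and blows up $v_i(y)=u(r_iy)/u(r_ie_1)$. It identifies the limit as $v=\tfrac12|y|^{2-n}+\tfrac12$ via even reflection, B\^ocher's theorem and the critical-point condition. It then computes $P(r_i,u)=(w_i^*)^2\bigl(-c_0+o(1)\bigr)$.
\end{itemize}

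You observe that for $\mathcal P(u)=0$ only the size $O(\eta_j^2)$ of $P(r_j,u)$ matters. That bound follows from the scale-invariant Harnack inequality and Corollary~\ref{cor:gradient}, along any radii $r_j=|x_j|$ at which the normalized solution is small. This treats both cases at once. It also needs neither the existence of local minima of $w$ nor any compactness or classification of blow-up limits.

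What the paper's heavier route buys is the definite negative sign $P(r_i,u)\le-\tfrac{c_0}{2}w_i^2$ at the local minima. That is exactly the input reused in Lemma~\ref{lem:removable}, through the balance $\tfrac{c_0}{2}w_i^2\le|E_{\rm vol}(r_i)|+|I_h(r_i)|$. For the present lemma this refinement is superfluous.

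Your closing check that $|y|=1$ lies in $\mathcal A_1^{(1)}$ is also unnecessary. Corollary~\ref{cor:gradient} is already stated pointwise for all $0<|x|<r_0/2$, and the Harnack inequality of Proposition~\ref{prop:scale-harnack} already covers the closed half-annulus up to the flat boundary.
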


			\begin{proof}
				The established upper bound gives \(\limsup_{x\to 0}|x|^{\frac{n-2}{2}}u(x)<\infty\). Two cases are possible.

				\medskip
				\noindent\textbf{Case 1: The limit vanishes.}
				Suppose \(\limsup_{x\to0}|x|^{\frac{n-2}{2}}u(x) = 0\). Because \(u\) is positive, this assumption immediately implies that the strict limit exists and vanishes:
				\begin{equation*}\lim_{x\to0}|x|^{\frac{n-2}{2}}u(x) = 0.\end{equation*}
				Equivalently, \(u=o(|x|^{\frac{2-n}{2}})\). The gradient estimate following from \eqref{eq:Harnack} gives
				\begin{equation*}
					|\nabla u(x)|\le C|x|^{-1}u = o(|x|^{-\frac{n}{2}}).
				\end{equation*}

				It remains to evaluate the asymptotic behavior of the Pohozaev integral \(P(r,u)\) term by term as \(r \to 0\). Recall that the \((n-1)\)-dimensional area measure of the hemisphere \(\partial''\mathcal{B}_r^+\) scales as \(O(r^{n-1})\), and the \((n-2)\)-dimensional area measure of the flat equator \(\partial B_r \cap \{x_n=0\}\) scales as \(O(r^{n-2})\).

				For the mixed boundary derivative term on the hemisphere, the pointwise estimate is
				\begin{equation*}
					u \frac{\partial u}{\partial \nu} = o(r^{\frac{2-n}{2}}) \cdot o(r^{-\frac{n}{2}}) = o(r^{1-n}).
				\end{equation*}
				After integration over the hemisphere, we have
				\begin{equation*}\int_{\partial''\mathcal{B}_r^+} u \frac{\partial u}{\partial \nu}  d\sigma = o(r^{1-n}) \cdot O(r^{n-1}) = o(1).\end{equation*}

				Moreover, for the quadratic gradient terms on the hemisphere, the radial weight \(r\) provides an extra vanishing factor
				\begin{equation*}
					r |\nabla u|^2 = r \cdot o(r^{-n}) = o(r^{1-n}).
				\end{equation*}
				The integral over the hemisphere is bounded by
				\begin{equation*}\int_{\partial''\mathcal{B}_r^+} r \Bigl( \frac{1}{2}|\nabla u|^2 - \Bigl|\frac{\partial u}{\partial \nu}\Bigr|^2 \Bigr)  d\sigma = o(r^{1-n}) \cdot O(r^{n-1}) = o(1).\end{equation*}

				We next consider the nonlinear boundary term on the flat equator. The critical Sobolev exponent is exactly \(p = \frac{2(n-1)}{n-2}\). Substituting the asymptotic bound for \(u\), one obtains
				\begin{equation*}
					u^{\frac{2(n-1)}{n-2}} = o\Bigl( (r^{\frac{2-n}{2}})^{\frac{2(n-1)}{n-2}} \Bigr) = o(r^{1-n}).
				\end{equation*}
				Multiplying by the radial weight \(r\) yields \(r u^{\frac{2(n-1)}{n-2}} = o(r^{2-n})\). Integrating this over the equator gives
				\begin{equation*}\int_{\partial B_r \cap \{x_n=0\}} r u^{\frac{2(n-1)}{n-2}}  ds' = o(r^{2-n}) \cdot O(r^{n-2}) = o(1).\end{equation*}

				Summing all these components, the preceding estimates implies that the entire Pohozaev integral vanishes asymptotically
				\begin{equation*}
					P(r,u) = o(1) + o(1) + o(1) = o(1) \quad \text{as } r \to 0.
				\end{equation*}
				and thus
				\(\mathcal{P}(u) = 0.\)

				\medskip
				\noindent\textbf{Case 2: The solution oscillates.}
				Suppose \(\limsup_{x\to0}|x|^{\frac{n-2}{2}}u(x) > 0\).
				Let \(\bar{u}(r)\) denote the spherical average of \(u\) over \(\partial''\mathcal{B}_r^+\), namely,
				\begin{equation*}
					\bar u(r) = \frac{1}{|\partial''\mathcal{B}_r^+|}\int_{\partial''\mathcal{B}_r^+} u(x) d\sigma.
				\end{equation*}
				The uniform Harnack inequality gives \(u(x) \asymp \bar{u}(r)\) for \(|x|=r\) which means that there exists a constant \(C>1\), independent of \(r\), such that
				\[
				C^{-1}\bar u(r)\le u(x)\le C\bar u(r),
				\qquad |x|=r.
				\]

				Define the normalized function
				\begin{equation*}
					w(r) := r^{\frac{n-2}{2}}\bar{u}(r).
				\end{equation*}
				By the hypothesis \(\liminf_{r \to 0} w(r) = 0\), there exists a sequence of radii \(r_i \to 0\) such that \(r_i\) are local minima of \(w(r)\). This critical point condition dictates
				\begin{equation}\label{eq:local_min_condition}
					w_i := w(r_i) \to 0, \qquad \text{and} \qquad \left. \frac{d}{dr} \left( r^{\frac{n-2}{2}}\bar{u}(r) \right) \right|_{r=r_i} = 0.
				\end{equation}

				Fix a unit vector \(e_1 \in \partial B_1 \cap \{x_n=0\}\) and define the rescaled sequence
				\begin{equation*}
					v_i(y) := \frac{u(r_i y)}{u(r_i e_1)}, \qquad y \in \overline{\mathcal{B}_{R_i}^+} \setminus \{0\},
				\end{equation*}
				where \(R_i = r_0 / r_i \to \infty\). By construction, \(v_i(e_1) = 1\). The rescaled metric is \(g_i(y) = g(r_i y) = \delta_{ij} + O(r_i|y|)\).

				The boundary condition \(B_g u = u^{\frac{n}{n-2}}\) transforms under rescaling to:
				\begin{equation*}
					-\partial_{y_n} v_i = r_i u(r_i e_1)^{\frac{2}{n-2}} v_i^{\frac{n}{n-2}} + O(r_i|y|)v_i.
				\end{equation*}
				The coefficient of the nonlinear term is determined by \(w_i\):
				\begin{equation*}
					r_i u(r_i e_1)^{\frac{2}{n-2}} = r_i \left( u(r_i e_1) r_i^{\frac{n-2}{2}} r_i^{-\frac{n-2}{2}} \right)^{\frac{2}{n-2}} \asymp r_i \left( w_i r_i^{-\frac{n-2}{2}} \right)^{\frac{2}{n-2}} = w_i^{\frac{2}{n-2}}.
				\end{equation*}
				Since \(w_i \to 0\), the nonlinear coefficient vanishes, namely, \(\lim_{i \to \infty} w_i^{\frac{2}{n-2}} = 0\).

				The normalization at the fixed boundary point does not lose compactness information. Indeed, by the uniform Harnack inequality on each fixed annulus
				\(K\Subset \overline{\mathbb R^n_+}\setminus\{0\}\),
				\begin{equation*}
					C_K^{-1}\le v_i\le C_K \qquad \text{on }K,
				\end{equation*}
				because \(u(r_i e_1)\asymp \bar u(r_i)\) and \(r_i^{\frac{n-2}{2}}u(r_i e_1)\asymp w_i\). Hence the rescaled boundary coefficient
				\(r_i u(r_i e_1)^{2/(n-2)}\) is \(O(w_i^{2/(n-2)})\) and tends to zero. Standard elliptic interior and boundary estimates then imply that \(\{v_i\}\) is precompact in
				\(C^2_{\rm loc}(\overline{\mathbb{R}^n_+}\setminus\{0\})\). Passing to a subsequence, \(v_i \to v\), where the limit function \(v \ge 0\) satisfies the linear Neumann system
				\begin{equation*}
					\begin{cases}
						-\Delta v = 0    & \text{in } \mathbb{R}^n_+ \setminus \{0\},         \\
						\partial_n v = 0 & \text{on } \partial\mathbb{R}^n_+ \setminus \{0\}.
					\end{cases}
				\end{equation*}

				Since \(v\) satisfies the homogeneous Neumann condition \(\partial_n v = 0\), define its even reflection across the boundary:
				\begin{equation*}
					\tilde{v}(y', y_n) :=
					\begin{cases}
						v(y', y_n)  & \text{if } y_n \ge 0, \\
						v(y', -y_n) & \text{if } y_n < 0.
					\end{cases}
				\end{equation*}
				By the Schwarz Reflection Principle, \(\tilde{v}\) is a positive, smooth harmonic function on \(\mathbb{R}^n \setminus \{0\}\). Applying Bôcher's Theorem to \(\tilde{v}\), there exist constants \(A, B \ge 0\) such that:
				\begin{equation*}
					\tilde{v}(y) = \frac{A}{|y|^{n-2}} + B.
				\end{equation*}
				Restricting back to the half-space, \(v(y) = A|y|^{2-n} + B\) for \(y \in \overline{\mathbb{R}^n_+} \setminus \{0\}\).

				The normalization condition \(v(e_1) = 1\) implies:
				\begin{equation}\label{eq:norm_AB}
					A + B = 1.
				\end{equation}
				Furthermore, the local minimum condition \eqref{eq:local_min_condition} transfers exactly to the rescaled averages. If \(\bar v_i(\rho)\) denotes the average of \(v_i\) on \(\partial''\mathcal B_\rho^+\), then
				\begin{equation*}
					\bar v_i(\rho)=\frac{\bar u(r_i\rho)}{u(r_i e_1)}.
				\end{equation*}
				Thus multiplication by the constant \(u(r_i e_1)^{-1}\) does not affect the critical point condition. After the change of variables \(r=r_i\rho\),
				\begin{equation*}
					\left.\frac{d}{d\rho}\left(\rho^{\frac{n-2}{2}}\bar v_i(\rho)\right)\right|_{\rho=1}=0.
				\end{equation*}
				Passing to the limit \(i \to \infty\) gives
				\begin{equation*}
					\left. \frac{d}{dr} \left( r^{\frac{n-2}{2}}\bar{v}(r) \right) \right|_{r=1} = 0.
				\end{equation*}
				Since \(v\) is radial, \(\bar{v}(r) = A r^{2-n} + B\). Expanding the derivative:
				\begin{equation*}
					\frac{d}{dr} \left( A r^{\frac{2-n}{2}} + B r^{\frac{n-2}{2}} \right) \bigg|_{r=1} = \frac{2-n}{2}A + \frac{n-2}{2}B = 0 \implies A = B.
				\end{equation*}
				Coupled with \eqref{eq:norm_AB}, it follows that \(A = B = \frac{1}{2}\). Hence,
				\begin{equation*}
					v(y) = \frac{1}{2}|y|^{2-n} + \frac{1}{2}.
				\end{equation*}

				The standard flat Pohozaev integral is evaluated for the limit function \(v\) at \(r=1\):
				\begin{equation*}
					P_{\rm flat}(1, v) := \int_{\partial''\mathcal{B}_1^+} \left( \frac{n-2}{2} v \frac{\partial v}{\partial \nu} - \frac{1}{2} |\nabla v|^2 + \left| \frac{\partial v}{\partial \nu} \right|^2 \right) d\sigma.
				\end{equation*}
				Evaluating \(v\) and its radial derivative at \(r=1\):
				\begin{equation*}
					v(1) = 1, \qquad \frac{\partial v}{\partial \nu}(1) = \frac{1}{2}(2-n)(1)^{1-n} = \frac{2-n}{2}.
				\end{equation*}
				Substitute these directly into the integral:
				\begin{align*}
					P_{\rm flat}(1, v) & = \int_{\partial''\mathcal{B}_1^+} \left[ \frac{n-2}{2} (1) \left(\frac{2-n}{2}\right) - \frac{1}{2} \left(\frac{2-n}{2}\right)^2 + \left(\frac{2-n}{2}\right)^2 \right] d\sigma \notag \\
					& = \left[ -\frac{(n-2)^2}{4} + \frac{(n-2)^2}{8} \right] |\partial''\mathcal{B}_1^+| \notag                                                                                              \\
					& = - \frac{(n-2)^2}{8} \omega_n' =: -c_0.
				\end{align*}
				Crucially, \(P_{\rm flat}(1, v)\) is a strictly negative constant \(-c_0 < 0\).

				The Pohozaev integral \(P(r_i,u)\) can be rewritten in terms of the rescaled function \(v_i(y)\). Using \(u(x) = u(r_i e_1) v_i(x/r_i)\) and \(d\sigma_x = r_i^{n-1}d\sigma_y\), we have
				\begin{align*}
					P(r_i, u) & = \int_{\partial''\mathcal{B}_{r_i}^+} \left( \frac{n-2}{2} u u_\nu - \frac{r_i}{2}|\nabla u|^2 + r_i u_\nu^2 \right) d\sigma_x \notag \\
					& \quad + \int_{\partial B_{r_i}\cap\{x_n=0\}} r_i \frac{n-2}{2(n-1)} u^{\frac{2n-2}{n-2}} dx' \notag                                    \\
					& = u(r_i e_1)^2 r_i^{n-2} \left[ P_{\rm flat}(1, v_i) + o(1) \right]   \notag                                                      \\
					& \qquad+ \int_{\partial B_1\cap\{y_n=0\}} r_i u(r_i e_1)^{\frac{2n-2}{n-2}} r_i^{n-2} v_i^{\frac{2n-2}{n-2}} dy' \notag                 \\
					& = u(r_i e_1)^2 r_i^{n-2} \left[ P_{\rm flat}(1, v_i) + O \left( r_i u(r_i e_1)^{\frac{2}{n-2}} \right) + o(1) \right].
				\end{align*}
				Let \(w_i^*:= u(r_i e_1)r_i^{\frac{n-2}{2}}\). By Harnack inequality, \(w_i^*\asymp w_i \to 0\). The boundary error coefficient \(r_i u(r_i e_1)^{\frac{2}{n-2}} \asymp w_i^{\frac{2}{n-2}}\) vanishes as \(i \to \infty\).

				Because \(v_i \to v\) in \(C^2_{\rm loc}\), the integral \(P_{\rm flat}(1, v_i) \to P_{\rm flat}(1, v) = -c_0\). Thus, the full Pohozaev integral at the critical sequence follows the precise asymptotic scaling
				\begin{equation*}
					P(r_i, u) = (w_i^*)^2 \left( -c_0 + o(1) \right).
				\end{equation*}
				By the existence of the limit \(\mathcal{P}(u) = \lim_{r \to 0} P(r, u)\), evaluating it along the sequence \(r_i\):
				\begin{equation*}
					\mathcal{P}(u) = \lim_{i \to \infty} P(r_i, u) = \lim_{i \to \infty} \left[ (w_i^*)^2 (-c_0) \right] = 0.
				\end{equation*}
				This concludes the proof.
			\end{proof}

			\begin{lem}\label{lem:removable}
				Assume that \(\mathcal{P}(u) \ge 0\). If \(\liminf_{x \to 0} u(x)|x|^{\frac{n-2}{2}} = 0\), then \(\limsup_{x \to 0} u(x)|x|^{\frac{n-2}{2}} = 0\).
			\end{lem}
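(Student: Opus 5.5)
I will argue by contradiction: assume \(\liminf w=0\) but \(\limsup w>0\), where \(w(r)=r^{\frac{n-2}{2}}\bar u(r)\) is the normalized hemispherical average from the proof of Lemma~\ref{lem:pohozaev_limit}. By the Harnack inequality, \(w\) is comparable to \(|x|^{\frac{n-2}{2}}u\) on \(|x|=r\). Under this assumption \(w\) oscillates. So there are radii \(r_i\to0\) that are local minima with \(w(r_i)\to0\). Between consecutive such minima there are local maxima \(s_i\), where \(w(s_i)\ge\eta>0\) for a fixed \(\eta\) and infinitely many \(i\). The plan is to show that the Pohozaev functional \(P(\rho,u)\) cannot reach \(\mathcal P(u)\ge0\) in the limit along such a configuration. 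The first ingredient is the identity \(P(\rho,u)=\mathcal P(u)-E_{\rm vol}(\rho)-I_h(\rho)\) with \(|E_{\rm vol}|+|I_h|\le C\rho\), which gives \(P(\rho,u)\ge -C\rho\) for all small \(\rho\).

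Next I pass to cylindrical variables \(t=-\log r\) and set \(\psi(t)=w(e^{-t})\). The Pohozaev integral is then a first integral of an averaged ODE up to errors. Rescale \(u\) at scale \(\rho\) as in Lemma~\ref{lem:pohozaev_limit}, with \(v_\rho(y)=\rho^{\frac{n-2}{2}}u(\rho y)\). The Harnack and gradient estimates (Proposition~\ref{prop:scale-harnack}, Corollary~\ref{cor:gradient}) make \(v_\rho\) uniformly bounded in \(C^2\) on the annulus \(1/2\le|y|\le2\). On the hemisphere, \(P(\rho,u)\) equals an expression in \(v_\rho\) of the form \(-\frac{(n-2)^2}{8}\omega_n'\psi^2+\frac12\omega_n'(\psi')^2+\)(boundary term \(\sim\psi^{\frac{2(n-1)}{n-2}}\))\(+\)(non-radial part). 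The non-radial part is controlled by the tangential gradient. The key sub-step is to show that the deviation of \(u\) from its average is small whenever \(\psi\) is small. To do this, decompose \(v_\rho=\bar v_\rho+\phi_\rho\). The function \(\phi_\rho\) solves a Neumann-type problem whose source is of size \(\psi^{\frac{n}{n-2}}+\rho\psi\). The spectral gap of the hemisphere Neumann Laplacian on nonconstant modes then gives \(\|\phi_\rho\|\lesssim\psi^{\frac{n}{n-2}}+\rho^{\beta}\psi\) on a window in \(t\), with \(\beta=(n-2)/2\).

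With this, \(\psi\) satisfies the approximate energy relation \(\frac12(\psi')^2-\frac{(n-2)^2}{8}\psi^2+c\psi^{\frac{2(n-1)}{n-2}}=P/\omega_n'+O(\psi^{2+\frac{2}{n-2}})+O(e^{-t}\psi^2)\). At a local minimum \(t_i\) we have \(\psi'=0\) and \(\psi\to0\), so \(P(r_i)\approx-\frac{(n-2)^2}{8}\omega_n'\psi(t_i)^2<0\). Combining this with \(P\ge -Cr_i\) gives \(\psi(t_i)^2\lesssim r_i=e^{-t_i}\), that is, \(\psi(t_i)\lesssim e^{-t_i/2}\). I then integrate the ODE \(\psi''-\frac{(n-2)^2}{4}\psi=-c\psi^{\frac{n}{n-2}}+\text{errors}\) forward from \(t_i\), where \(\psi\) is tiny and \(\psi'=0\). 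Near zero the dynamics is dominated by the linear part, so \(\psi\) grows like \(\psi(t_i)\cosh(\beta(t-t_i))\). The claim is that \(\psi\) cannot return to the level \(\eta\) and come back down to another minimum unless \(P\) at the intermediate maximum is pinned to \(\mathcal P(u)+O(e^{-t})\). The energy at a maximum \(\psi=\eta\) equals a fixed negative value \(-\frac{(n-2)^2}{8}\eta^2+c\eta^{\frac{2(n-1)}{n-2}}\) only if \(\eta\) is large. Small oscillations instead give \(P\approx-\frac{(n-2)^2}{8}\omega_n'\psi^2<0\) at both extrema. Since \(P(\rho)\to\mathcal P(u)\), the energies at the minima tend to \(\mathcal P(u)\ge0\) from below. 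So the minima \(\psi(t_i)\) must shrink at the rate \(e^{-t_i/2}\), and along the trajectory the energy changes by at most \(O(e^{-t})\). An orbit with energy within \(O(e^{-t_i})\) of \(0\) that starts with \(\psi'=0\) at height \(\psi(t_i)\lesssim e^{-t_i/2}\) is the homoclinic orbit, up to small error. This orbit reaches a maximum \(\eta_*\) of fixed size and then returns to zero only in the limit \(t\to\infty\). A genuine return to a new minimum at \(t_{i+1}\) therefore requires the accumulated errors \(\int_{t_i}^{t_{i+1}}e^{-t}\psi\psi'\,dt\) to exceed the energy gap \(\psi(t_{i+1})^2\).

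The main obstacle is this last absorption, comparing the error \(O(e^{-t}\psi^2)\), integrated in \(t\), against the quadratic decay of \(\psi\) along the homoclinic tail. On the tail \(\psi\sim e^{-\beta(t-t_*)}\). The error drives \(P\) only by an amount of order \(e^{-t}\psi^2\) integrated, while forcing a new minimum requires a contribution of size \(\psi^2\) at the turning time. This forces \(\beta\le1\), exactly the condition \(n\le4\) noted in the introduction, and I expect the careful bookkeeping of this step to be the hardest part. The way through is to use a Gronwall estimate on the function \(E(t)-P(e^{-t})/\omega_n'\). Once the return is excluded, \(\psi\) is eventually monotone, so \(\liminf\psi=0\) forces \(\lim\psi=0\), contradicting \(\limsup w>0\) and proving the lemma.
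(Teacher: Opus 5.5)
Your setup agrees with the paper's: you argue by contradiction, take minima $t_i$ of the normalized average with $\psi_i:=\psi(t_i)\to0$, use $P(\rho,u)=\mathcal P(u)-E_{\rm vol}(\rho)-I_h(\rho)$, and evaluate $P(r_i,u)\approx-\frac{(n-2)^2}{8}\omega_n'\psi_i^2$ at a minimum. The paper gets this last estimate from the blow-up limit $\frac12|y|^{2-n}+\frac12$ rather than a spectral-gap decomposition; either route works.

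The gap is the decisive step. You only sketch it, and the heuristic you give is incorrect. An orbit leaving a minimum with $\psi'=0$ at height $\psi_i>0$ has energy $\approx-\frac{(n-2)^2}{8}\psi_i^2<0$. For the autonomous ODE this is a periodic (Fowler-type) orbit inside the homoclinic loop. It returns to a minimum of size $\approx\psi_i$ after time $\approx\frac{2}{\alpha}\ln\frac{1}{\psi_i}$, where $\alpha=\frac{n-2}{2}$. So a return to a new minimum happens with \emph{no} error at all; it does not require accumulated errors to exceed $\psi(t_{i+1})^2$.

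Calling the orbit ``homoclinic up to small error'' because its energy is within $O(e^{-t_i})$ of $0$ erases exactly the distinction that decides whether a return occurs: energy $-c\psi_i^2$ with $\psi_i^2\lesssim e^{-t_i}$, versus energy $0$. The tail comparison meant to produce $\beta\le1$ and the Gronwall estimate on $E(t)-P(e^{-t})/\omega_n'$ are never carried out, so no contradiction is reached. Moreover, the only quantitative constraint you actually derive, $\psi_i^2\lesssim e^{-t_i}$ from the crude bound $|E_{\rm vol}|+|I_h|\le C\rho$, is too weak for $n=4$. A smaller point: the error $O(\psi^{2+\frac{2}{n-2}})$ in your energy relation has the same order as the nonlinear term $c\psi^{\frac{2(n-1)}{n-2}}$. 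As written, the relation does not even single out a homoclinic orbit.

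The missing idea is to get the contradiction at a single minimum, using the descent \emph{before} $t_i$ rather than a return after it. Let $[\bar t_i,t_i^*]$ be the maximal interval containing $t_i$ on which $w\le\varepsilon_0$. Since $\limsup w>\varepsilon_0$, we have $\bar t_i\to\infty$. On this interval $w$ is convex. The first-order comparisons $Y=w\pm\frac1\alpha w_t$ with Gronwall give:
\begin{itemize}
\item $t_i-\bar t_i\ge\frac1\alpha\ln\frac1{w_i}-C$, i.e. the profile cannot fall from $\varepsilon_0$ to $w_i$ faster than $e^{-\alpha(t-\bar t_i)}$;
\item $t_i^*-t_i\ge\frac1\alpha\ln\frac1{w_i}-C$;
\item $w(t)\le Cw_ie^{\alpha(t-t_i)}$ on $[t_i,t_i^*]$.
\end{itemize}
Multiplying the ODE inequality by $w_t$ gives the matching upper bound $t_i^*-t_i\le\frac1\alpha\ln\frac1{w_i}+C$.

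Next, refine the Pohozaev error at $r_i$. Split $\mathcal B_{r_i}^+$ into the core $|x|\le r_i^*=e^{-t_i^*}$ and the safe zone $r_i^*<|x|\le r_i$. The core is bounded crudely by $Ce^{-t_i^*}$. The safe zone is bounded by $C\int_{t_i}^{t_i^*}w^2e^{-t}\,dt=o(w_i^2)$ for $n=3,4$. With $\mathcal P(u)\ge0$ this yields $w_i^2\lesssim e^{-t_i^*}$, i.e.\ $t_i^*\le2\ln\frac1{w_i}+C$. Combining this with the two lower length bounds gives $\bar t_i\le(1-\frac1\alpha)t_i^*+C\le C$ when $\alpha\le1$, contradicting $\bar t_i\to\infty$.

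Your crude bound $\psi_i^2\lesssim e^{-t_i}$, combined with the backward estimate $\psi_i\gtrsim e^{-\alpha(t_i-\bar t_i)}$, already gives $e^{\bar t_i}\le C$ when $n=3$. For $n=4$ it only gives $e^{2\bar t_i}\lesssim e^{t_i}$. The core/safe splitting is what handles $n=4$, and no analysis of a return to the next minimum is needed.
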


			\begin{proof}
				Argue by contradiction. Suppose that
				\begin{equation*}
					\limsup_{x\to0} u(x)|x|^{\frac{n-2}{2}} > 0.
				\end{equation*}
				Together with the hypothesis \(\liminf_{x\to0} u(x)|x|^{\frac{n-2}{2}} = 0\), the solution oscillates infinitely often as \(x\to0\).

				Set \(t = -\ln |x|\) and define the spherical average over the hemisphere
				\begin{equation*}
					\bar u(r) = \frac{1}{|\partial''\mathcal{B}_r^+|}\int_{\partial''\mathcal{B}_r^+} u(x) d\sigma,
					\qquad r = e^{-t}.
				\end{equation*}
				Introduce the normalized function
				\begin{equation*}
					w(t) = r^{\frac{n-2}{2}} \bar u(r) = e^{-t\frac{n-2}{2}} \bar u(e^{-t}).
				\end{equation*}
				By the assumptions, there exists a sequence of local minima \(t_i\to\infty\) such that
				\begin{equation*}
					w_t(t_i)=0,\qquad w_i:=w(t_i)\to 0.
				\end{equation*}
				Fix a sufficiently small constant \(\varepsilon_0>0\) (to be chosen later) and let \([\bar t_i,t_i^*]\subset[T_0,\infty)\) be the maximal interval containing \(t_i\) on which \(w(t)\le\varepsilon_0\) and
				\begin{equation*}
					w(\bar t_i)=w(t_i^*)=\varepsilon_0.
				\end{equation*}
				Since \(\limsup_{t\to\infty} w(t) > \varepsilon_0\) and \(w_i\to0\), it follows that \(\bar t_i\to\infty\).

				Recall the error notation introduced in the Pohozaev identity,
				\(-\Delta u=\mathcal E_{\rm int}\) and
				\(-\partial_n u=u^{\frac n{n-2}}+\mathcal E_{\rm bdy}\) on \(\{x_n=0\}\).
				In the normalized coordinates, these errors satisfy
				\begin{equation*}
					|\mathcal E_{\rm int}(x)|\le C|x|^{-1}u(x),\qquad
					|\mathcal E_{\rm bdy}(x)|\le C u(x).
				\end{equation*}
				Integrating \(-\Delta u = \mathcal E_{\rm int}\) over \(\mathcal B_r^+\),
				the divergence theorem and the boundary condition on the flat part give
				\begin{equation}\label{eq:avg_eq}
					\frac{d}{dr}\bigl(r^{n-1}\bar u'(r)\bigr)
					= -\frac{1}{\omega'_n}\int_{\partial B_r\cap\{x_n=0\}}u^{\frac{n}{n-2}} ds' + O(r^{n-2}) \bar u(r),
				\end{equation}
				where \(\omega'_n=|\partial''\mathcal{B}_1^+|\) and the \(O(r^{n-2})\) term absorbs the metric errors in \(\mathcal E_{\rm int}\) and \(\mathcal E_{\rm bdy}\).

				The uniform Harnack inequality gives \(u(x) \asymp \bar{u}(r)\) for \(|x|=r\) where \(\bar{u}(r)\) is the spherical average of \(u\) over \(\partial''\mathcal{B}_r^+\). Consequently, the nonlinear integral on the equator is bounded from both sides by the spherical average
				\begin{equation*}
					c_1 r^{n-2} \bar u(r)^{\frac{n}{n-2}} \le \int_{\partial B_r\cap\{x_n=0\}} u^{\frac{n}{n-2}} ds' \le C_1 r^{n-2} \bar u(r)^{\frac{n}{n-2}}.
				\end{equation*}
				Insert these bounds into \eqref{eq:avg_eq} and convert to the cylindrical variable \(t=-\ln r\). Setting \(\alpha = \frac{n-2}{2}\), \(p = \frac{n}{n-2}\), and \(w(t) = r^\alpha \bar u(r)\), a direct computation shows that the \(r\)-powers cancel out
				\begin{equation*}
					r\frac{d}{dr}\bigl(r^{n-1}\bar u'(r)\bigr) = r^{\frac{n-2}{2}}\Bigl(w_{tt} - \alpha^2 w\Bigr).
				\end{equation*}
				All metric errors are of order \(O(e^{-t}) w\). Consequently, \(w\) satisfies, for \(t\) large enough, the double-sided differential inequalities
				\begin{equation}\label{eq:ode_bounds}
					- C_1 w^p - C_2 e^{-t} w \le w_{tt} - \alpha^2 w \le - c_1 w^p + C_2 e^{-t} w,
				\end{equation}
				where \(C_1, C_2, c_1\) are positive constants depending only on \(n\), the metric, and the uniform Harnack constant.

				On the safe interval \([\bar t_i,t_i^*]\), \(w(t)\le\varepsilon_0\).
				After decreasing \(\varepsilon_0\) and increasing the initial time if necessary,
				the lower bound in \eqref{eq:ode_bounds} gives
				\[
				w_{tt}\ge \frac{\alpha^2}{2}w
				\qquad\text{on }[\bar t_i,t_i^*].
				\]
				Hence \(w_t\) is strictly increasing on this interval. Since
				\(w_t(t_i)=0\),
				\[
				w_t\le0\quad\text{on }[\bar t_i,t_i],
				\qquad
				w_t\ge0\quad\text{on }[t_i,t_i^*].
				\]
				The length bounds below follow from the differential inequalities and a
				first-order comparison. Recall that \(w_i = w(t_i)\).

				\medskip
				\noindent\emph{Right interval \([t_i,t_i^*]\) -- upper bound.}
				Here \(w_t\ge0\). Using the lower bound in \eqref{eq:ode_bounds}, we have
				\begin{equation*}
					w_{tt} - \alpha^2 w \ge - C_1 w^p - C_2 e^{-t} w.
				\end{equation*}
				Multiply by \(w_t\) and integrate from \(t_i\) to \(t\) (\(t_i\le t\le t_i^*\)):
				\begin{align*}
					\frac12 w_t(t)^2 - \frac12 \alpha^2\bigl(w(t)^2-w_i^2\bigr)
					& \ge -C_1\int_{t_i}^t w^p w_s ds - C_2\int_{t_i}^t e^{-s} w w_s ds.
				\end{align*}
				The first right-hand integral is \(-\frac{C_1}{p+1}\bigl(w(t)^{p+1}-w_i^{p+1}\bigr)\).
				For the second, integration by parts gives
				\begin{equation*}
					-\int_{t_i}^t e^{-s} \frac12 (w^2)_s ds
					= -\frac12 e^{-t} w(t)^2 + \frac12 e^{-t_i} w_i^2 - \frac{1}{2}\int_{t_i}^t e^{-s} w^2 ds.
				\end{equation*}
				Since \(w(s)\le w(t)\) on this interval, the last integral is bounded by \(C e^{-t_i} w(t)^2\).
				Combining these estimates and using \(w_i\ll 1\) gives
				\begin{equation*}
					w_t(t)^2 \ge \alpha^2\bigl(w^2-w_i^2\bigr)\bigl(1 - C w^{p-1} - C e^{-t_i}\bigr).
				\end{equation*}
				Because \(w(t)\le\varepsilon_0\) and \(p>1\), the term \(C w^{p-1}\) can be made arbitrarily small by choosing \(\varepsilon_0\) small enough.
				Thus, for large \(i\),
				\begin{equation*}
					w_t(t) \ge \alpha \sqrt{w^2-w_i^2} \bigl(1 - C e^{-t_i} - C w^{p-1}\bigr).
				\end{equation*}
				Integrating \(dt = dw/w_t\) from \(w_i\) to \(\varepsilon_0\) yields
				\begin{align*}
					t_i^*- t_i
					& \le \frac{1}{\alpha} \int_{w_i}^{\varepsilon_0} \frac{1 + C e^{-t_i} + C w^{p-1}}{\sqrt{w^2 - w_i^2}}   dw \\
					& = \frac{1 + C e^{-t_i}}{\alpha} \left[ \ln \frac{\varepsilon_0 + \sqrt{\varepsilon_0^2 - w_i^2}}{w_i} \right]
					+ \frac{C}{\alpha} \int_{w_i}^{\varepsilon_0} \frac{w^{p-1}}{\sqrt{w^2 - w_i^2}}   dw
				\end{align*}

				The last integral remains uniformly bounded as \(w_i\to0\) since \(p>1\) (in fact, it has a finite limit) and we denote this bound by \(C_{\rm nonlin}\).
				Moreover, \(\ln\frac{\varepsilon_0+\sqrt{\varepsilon_0^2-w_i^2}}{w_i} = \ln\frac{2\varepsilon_0}{w_i} + O(w_i^2)\).
				Hence, after renaming constants,
				\begin{equation}\label{eq:ub_right}
					t_i^*- t_i \le \frac1\alpha \ln\frac1{w_i} + C_R e^{-t_i} \ln\frac1{w_i} + C.
				\end{equation}

				\noindent\emph{Right interval \([t_i,t_i^*]\) -- lower bound.}
				For the opposite inequality, a first-order comparison is used which does not
				freeze the coefficient \(e^{-t}\) at an endpoint. Dropping the nonlinear
				term from the upper bound in \eqref{eq:ode_bounds} gives
				\[
				w_{tt}\le (\alpha^2+C e^{-t})w.
				\]
				Define
				\[
				Y_R(t):=w(t)+\frac1\alpha w_t(t),
				\qquad t\in[t_i,t_i^*].
				\]
				Since \(w_t\ge0\) on this interval, \(0\le w\le Y_R\), and hence
				\begin{align*}
					Y_R'(t)
					&= w_t(t)+\frac1\alpha w_{tt}(t) \\
					&\le w_t(t)+\alpha w(t)+C e^{-t}w(t) \\
					&\le (\alpha+C e^{-t})Y_R(t).
				\end{align*}
				The Gronwall's inequality gives
				\[
				Y_R(t)
				\le Y_R(t_i)\exp\left(
				\alpha(t-t_i)+C\int_{t_i}^t e^{-s}ds
				\right)
				\le C w_i e^{\alpha(t-t_i)}.
				\]
				Since \(w(t_i^*)=\varepsilon_0\) and \(w\le Y_R\), it follows that
				\[
				\varepsilon_0\le C w_i e^{\alpha(t_i^*-t_i)}.
				\]
				Therefore
				\begin{equation}\label{eq:lb_right}
					t_i^*- t_i \ge \frac1\alpha \ln\frac1{w_i} - C.
				\end{equation}

				\noindent\emph{Left interval \([\bar t_i,t_i]\) -- lower bound.}
				Reverse time by setting
				\[
				W_i(\tau):=w(t_i-\tau),
				\qquad 0\le \tau\le t_i-\bar t_i.
				\]
				Then \(W_i'(\tau)=-w_t(t_i-\tau)\ge0\), and
				\[
				W_i''(\tau)=w_{tt}(t_i-\tau)
				\le (\alpha^2+C e^{-(t_i-\tau)})W_i(\tau).
				\]
				Define
				\[
				Y_L(\tau):=W_i(\tau)+\frac1\alpha W_i'(\tau).
				\]
				Since \(0\le W_i\le Y_L\),
				\begin{align*}
					Y_L'(\tau)
					&= W_i'(\tau)+\frac1\alpha W_i''(\tau) \\
					&\le W_i'(\tau)+\alpha W_i(\tau)
					+C e^{-(t_i-\tau)}W_i(\tau) \\
					&\le (\alpha+C e^{-(t_i-\tau)})Y_L(\tau).
				\end{align*}
				Hence
				\begin{align*}
					Y_L(\tau)
					&\le Y_L(0)\exp\left(
					\alpha\tau+C\int_0^\tau e^{-(t_i-s)}ds
					\right) \\
					&\le C w_i e^{\alpha\tau}.
				\end{align*}
				This follows from
				\[
				\int_0^\tau e^{-(t_i-s)}ds
				=e^{-(t_i-\tau)}-e^{-t_i}
				\le e^{-\bar t_i}\le C.
				\]
				Taking \(\tau=t_i-\bar t_i\) and using
				\(W_i(t_i-\bar t_i)=w(\bar t_i)=\varepsilon_0\), one obtains
				\begin{equation}\label{eq:lb_left}
					t_i - \bar t_i \ge \frac1\alpha \ln\frac1{w_i} - C.
				\end{equation}

				Subtract \eqref{eq:lb_left} from \eqref{eq:ub_right}. The divergent
				main terms \(\frac1\alpha\ln\frac1{w_i}\) cancel exactly, leaving
				\[
				t_i^*- 2t_i + \bar t_i
				\le C e^{-t_i}\ln\frac1{w_i}+C.
				\]
				By \eqref{eq:lb_left},
				\[
				\ln\frac1{w_i}\le C(t_i-\bar t_i+1).
				\]
				Writing \(s_i=t_i-\bar t_i\ge0\), one has
				\[
				e^{-t_i}\ln\frac1{w_i}
				\le C e^{-\bar t_i}e^{-s_i}(s_i+1)\le C.
				\]
				Therefore
				\begin{equation}\label{eq:linear}
					t_i^*\le 2t_i - \bar t_i + C.
				\end{equation}
				Combining the two lower bounds \eqref{eq:lb_right} and \eqref{eq:lb_left}
				\begin{equation}\label{eq:total_lb}
					t_i^*- \bar t_i \ge \frac{2}{\alpha} \ln\frac1{w_i} - C.
				\end{equation}

				It remains to evaluate the Pohozaev identity at \(r_i = e^{-t_i}\). Recall that
				\begin{equation*}
					P(r_i,u) = \mathcal{P}(u) - E_{\rm vol}(r_i) - I_h(r_i).
				\end{equation*}
				We uses the robust rescaling argument already established in Lemma~\ref{lem:pohozaev_limit}. At the local minimum \(r_i\), the rescaled functions \(v_i(y) = \frac{u(r_i y)}{u(r_i e_1)}\) converge in \(C^2_{\rm loc}(\overline{\mathbb{R}^n_+}\setminus\{0\})\) to the limit profile \(v(y) = \frac{1}{2}|y|^{2-n} + \frac{1}{2}\).

				By the exact asymptotic expansion evaluated along this critical sequence, the Pohozaev integral is dominated by the limit profile:
				\begin{equation*}
					P(r_i,u)
					=(w_i^*)^2\left(P_{\rm flat}(1,v_i)+o(1)\right)
					=-c_0(w_i^*)^2+o\bigl((w_i^*)^2\bigr),
				\end{equation*}
				where \(c_0>0\) is a positive dimensional constant. The Harnack inequality gives \(w_i^*\asymp w_i\). Hence, after decreasing \(c_0\) if necessary, for sufficiently large \(i\),
				\begin{equation*}
					P(r_i,u) \le -\frac{c_0}{2} w_i^2.
				\end{equation*}
				By hypothesis \(\mathcal{P}(u) \ge 0\), it follows the fundamental balance
				\begin{equation*}
					\frac{c_0}{2} w_i^2 \le -P(r_i,u) = -\mathcal{P}(u) + E_{\rm vol}(r_i) + I_h(r_i) \le |E_{\rm vol}(r_i)| + |I_h(r_i)|.
				\end{equation*}

				By standard pointwise estimates on the error operators, the volume and boundary errors are separated as the volume and boundary error integrals
				\begin{equation*}
					|E_{\rm vol}(r_i)| \le C \int_{\mathcal{B}_{r_i}^+} |X| |\mathcal E_{\rm int}|  dx, \qquad
					|I_h(r_i)| \le C \int_{\partial'\mathcal{B}_{r_i}^+} |X| |\mathcal E_{\rm bdy}|  dx'.
				\end{equation*}
				Both domains of integration are split at the intermediate radius \(r_i^* = e^{-t_i^*}\), defining the core region (\(|x| \le r_i^*\)) and the safe region (\(r_i^*< |x| \le r_i\))
				\begin{equation*}
					|E_{\rm vol}(r_i)| + |I_h(r_i)| \le (E_{\rm core} + I_{\rm core}) + (E_{\rm safe} + I_{\rm safe}).
				\end{equation*}

				\noindent\emph{The core.}
				Using the global upper bounds \(u(x)\le C|x|^{-\alpha}\), \(|\nabla u(x)|\le C|x|^{-\alpha-1}\), and \(|\nabla^2 u(x)| \le C|x|^{-\alpha-2}\), the uniform bounds are \(|X| \le C|x|^{-\alpha}\), \(|\mathcal E_{\rm int}| \le C|x|^{-\alpha-1}\), and \(|\mathcal E_{\rm bdy}| \le C|x|^{-\alpha}\).
				For the volume core integral
				\begin{equation*}
					E_{\rm core} \le C \int_0^{r_i^*} \rho^{-\alpha} \cdot \rho^{-\alpha-1} \cdot \rho^{n-1}d\rho = C \int_0^{r_i^*} \rho^{n-2\alpha-2} d\rho = C \int_0^{r_i^*} 1   d\rho = C e^{-t_i^*},
				\end{equation*}
				where the critical relation \(2\alpha=n-2\) has been used.
				For the flat boundary core integral
				\begin{equation*}
					I_{\rm core} \le C \int_0^{r_i^*} \rho^{-\alpha} \cdot \rho^{-\alpha} \cdot \rho^{n-2}d\rho = C \int_0^{r_i^*} \rho^{n-2\alpha-2} d\rho = C e^{-t_i^*}.
				\end{equation*}

				\noindent\emph{The safe zone.}
				In the safe zone the estimates are refined using the ODE estimates. For \(t \in [t_i, t_i^*]\) (corresponding to the annulus \(r_i^*\le |x| \le r_i\)), the Harnack inequality yields \(u(x) \asymp w(t) r^{-\alpha}\) and \(|\nabla u(x)| \le C w(t) r^{-\alpha-1}\).
				Consequently, \(|X| \le C w(t) r^{-\alpha}\), \(|\mathcal E_{\rm int}| \le C w(t) r^{-\alpha-1}\), and \(|\mathcal E_{\rm bdy}| \le C w(t) r^{-\alpha}\).
				Evaluating the volume safe integral with the substitution \(dr = -e^{-t}dt\)
				\begin{equation*}
					E_{\rm safe} \le C \int_{r_i^*}^{r_i} w(t)^2 r^{-2\alpha-1} r^{n-1} dr = C \int_{r_i^*}^{r_i} w(t)^2 dr = C \int_{t_i}^{t_i^*} w(t)^2 e^{-t} dt.
				\end{equation*}
				Similarly, evaluating the boundary safe integral over the flat annulus
				\begin{equation*}
					I_{\rm safe} \le C \int_{r_i^*}^{r_i} w(t)^2 r^{-2\alpha} r^{n-2} dr = C \int_{r_i^*}^{r_i} w(t)^2 dr = C \int_{t_i}^{t_i^*} w(t)^2 e^{-t} dt.
				\end{equation*}

				The first-order comparison used in the proof of \eqref{eq:lb_right} gives, for \(t\in[t_i,t_i^*]\),
				\[
				w(t)\le C w_i e^{\alpha(t-t_i)}.
				\]
				Substituting this into the combined safe zone integral yields
				\begin{align*}
					E_{\rm safe} + I_{\rm safe}
					& \le C \int_{t_i}^{t_i^*} w(t)^2 e^{-t} dt \le C w_i^2 \int_{t_i}^{t_i^*} e^{2\alpha(t-t_i)} e^{-t} dt \\
					& = C w_i^2 e^{-2\alpha t_i} \int_{t_i}^{t_i^*} e^{(2\alpha-1)t} dt.
				\end{align*}

				For \(n=3\), \(\alpha=1/2\), the exponent inside the integral is zero, and hence
				\begin{equation*}
					E_{\rm safe}+I_{\rm safe}
					\le C w_i^2 e^{-t_i}(t_i^*-t_i).
				\end{equation*}
				It remains only to absorb the logarithmic length factor. From the right
				upper bound \eqref{eq:ub_right},
				\begin{equation*}
					t_i^*-t_i
					\le \frac1\alpha\log\frac1{w_i}
					+ C e^{-t_i}\log\frac1{w_i}+C.
				\end{equation*}
				On the other hand, the left lower bound \eqref{eq:lb_left} gives directly
				\begin{equation*}
					\log\frac1{w_i}\le C(t_i-\bar t_i+1).
				\end{equation*}
				Consequently,
				\begin{equation*}
					t_i^*-t_i \le C(t_i-\bar t_i+1).
				\end{equation*}
				Writing \(s_i=t_i-\bar t_i\ge0\) gives
				\begin{equation*}
					e^{-t_i}(t_i^*-t_i)
					\le C e^{-\bar t_i}e^{-s_i}(s_i+1)
					= o(1),
				\end{equation*}
				since \((s+1)e^{-s}\) is bounded on \([0,\infty)\) and
				\(\bar t_i\to\infty\). Therefore, we have
				\begin{equation*}
					E_{\rm safe}+I_{\rm safe}
					\le o(1)w_i^2.
				\end{equation*}
				For \(n=4\), \(\alpha=1\), the integral evaluates to
				\begin{equation*}
					E_{\rm safe} + I_{\rm safe} \le C w_i^2 e^{-2t_i} e^{t_i^*} = C w_i^2 e^{t_i^*-2t_i} \le C w_i^2 e^{-\bar t_i} = o(1) w_i^2,
				\end{equation*}
				where the proof uses the linear cut-off \eqref{eq:linear} and the fact that \(\bar t_i\to\infty\).
				Thus the total safe zone error is strictly absorbed into the left-hand side \(\frac{c_0}{2} w_i^2\) for large \(i\).

				Combining the estimates gives
				\begin{equation*}
					w_i^2 \le C e^{-t_i^*}.
				\end{equation*}
				Taking logarithms,
				\begin{equation*}
					2\ln\frac1{w_i} \ge t_i^*-C.
				\end{equation*}
				Insert this lower bound into the total length inequality \eqref{eq:total_lb}
				\begin{equation*}
					t_i^*-\bar t_i \ge \frac{2}{\alpha}\ln\frac1{w_i} - C
					\ge \frac{1}{\alpha}\bigl(t_i^*-C\bigr) - C
				\end{equation*}
				and rearrange
				\begin{equation*}
					\bar t_i \le t_i^*\Bigl(1 - \frac1\alpha\Bigr) + C.
				\end{equation*}

				Evaluate for the two relevant dimensions.
				\begin{itemize}
					\item For \(n=3\): \(\alpha = \frac12\), hence \(1 - \frac1\alpha = -1\). The inequality forces
					\(\bar t_i \le -t_i^*+C \le C\).
					\item For \(n=4\): \(\alpha = 1\), hence \(1 - \frac1\alpha = 0\). The inequality gives
					\(\bar t_i \le C\).
				\end{itemize}
				In both cases \(\bar t_i \le C\) uniformly, which contradicts the fact that \(\bar t_i\to\infty\).
				This contradiction shows that the original assumption was false.
				Hence
				\begin{equation*}
					\limsup_{x\to0} u(x)|x|^{\frac{n-2}{2}} = 0,
				\end{equation*}
				completing the proof.
			\end{proof}

			\begin{lem}\label{lem:removable_criterion}
				Let \(u\) be a positive solution of the boundary Yamabe problem
				\eqref{eq:Main_PDE} in \(\mathcal{B}_{r_0}^+\setminus\{0\}\) with
				\(n\in\{3,4\}\).  If
				\begin{equation*}
					\lim_{x\to0}|x|^{\frac{n-2}{2}}u(x)=0,
				\end{equation*}
				then the singularity at the origin is removable; i.e.\ \(u\) extends to a
				smooth solution in the whole half-ball \(\mathcal{B}_{r_0}^+\).
			\end{lem}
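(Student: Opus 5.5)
The plan is to upgrade the qualitative hypothesis \(u=o(|x|^{-\frac{n-2}{2}})\) to the borderline-subcritical bound \(u\le C_\gamma|x|^{-\gamma}\) for every \(\gamma>0\), and then to \(u\in L^\infty\), after which standard elliptic regularity for the oblique problem gives smoothness. I will work in the normalized coordinates \eqref{eq:metric_expansion}, and use the Harnack inequality (Proposition~\ref{prop:scale-harnack}) and Corollary~\ref{cor:gradient} throughout. The first step is to linearize. On the flat boundary the problem reads \(-\partial_n u+\mathcal E_{\rm bdy}'=V(x)u\) with \(V=u^{\frac{2}{n-2}}\). The hypothesis gives \(|x|V(x)=\bigl(|x|^{\frac{n-2}{2}}u\bigr)^{\frac{2}{n-2}}\to0\). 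So the Robin potential satisfies \(|V|\le \eta(|x|)/|x|\) with \(\eta\to0\), which is strictly smaller than the critical Hardy-type size \(1/|x|\) for boundary potentials.

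The second step is a barrier comparison on a small half-ball \(\mathcal B_{r}^+\setminus\{0\}\), with \(r\) chosen so that \(\eta\le\eta_0\). I take the supersolution \(\Phi(x)=\varepsilon|x|^{2-n}+M|x|^{-\gamma}\bigl(1+a\,x_n/|x|\bigr)\) with small \(\gamma>0\). The radial term \(|x|^{-\gamma}\) has \(-\Delta|x|^{-\gamma}=\gamma(n-2-\gamma)|x|^{-\gamma-2}>0\). Its normal derivative on \(\{x_n=0\}\) vanishes, so I add the harmonic-like correction \(a x_n|x|^{-\gamma-1}\). It contributes \(-\partial_n\) of size \(-a|x|^{-\gamma-1}\), which dominates \(V\Phi\le\eta_0|x|^{-\gamma-1}\) once \(a>2\eta_0\). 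Meanwhile its Laplacian is \(O(a\gamma)|x|^{-\gamma-2}\). To keep the interior inequality, I need to balance \(a\) against \(\gamma\): I choose \(\gamma\) first, then \(\eta_0\ll\gamma\) and \(a\simeq\gamma/C\). The metric errors from \(g_{ij}-\delta_{ij}=O(|x|)\), \(R_g\) and \(h_g\) are lower order by a factor \(|x|\), so they are absorbed for \(r\) small.

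Next I compare \(u\) with \(\Phi\). Near the origin \(\varepsilon|x|^{2-n}\) dominates \(u=o(|x|^{-\frac{n-2}{2}})\), and on \(\partial''\mathcal B_r^+\) I take \(M\) large. The maximum principle for \(-L_g\) with the oblique condition \(\partial_\nu-V\) then gives \(u\le\Phi\). To exclude a negative boundary minimum of \(\Phi-u\), I use the strict boundary inequality, as in the proof of \eqref{eq:v-ge-phi-startup-final}. Letting \(\varepsilon\to0\) yields \(u\le M|x|^{-\gamma}\).

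The third step is a bootstrap. With \(u\le C|x|^{-\gamma}\) and \(\gamma\) small, \(V\le C|x|^{-\frac{2\gamma}{n-2}}\in L^{q}(B_r)\) for some \(q>n-1\). Hence \(u\in L^p\) on the boundary and \(u\in H^1\) near the origin: the energy is controlled using Corollary~\ref{cor:gradient}, \(|\nabla u|\le C|x|^{-1-\gamma}\), which is in \(L^2\) for \(n\ge3\) and small \(\gamma\). Since a point has zero capacity, \(u\) is a weak solution across \(0\). A Moser iteration, or the local maximum principle of \cite{liebermanObliqueDerivativeProblems2012} with an \(L^q\) Robin coefficient for \(q>n-1\), gives \(u\in L^\infty\). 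Then Schauder theory for oblique problems gives \(C^{1,\alpha}\) and finally \(C^\infty\).

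I expect the main obstacle to be the barrier step: keeping both the interior inequality \(-L_g\Phi\ge0\) and the boundary inequality \(B_g\Phi\ge V\Phi\) simultaneously with the single normal correction. If this fails, I would instead derive the improved decay from the averaged ODE \eqref{eq:avg_eq}. There \(w\to0\), so \(w_{tt}\ge(\alpha^2-o(1))w\), which forces \(w\le Ce^{-(\alpha-\delta)t}\), i.e.\ \(u\le C|x|^{-\delta}\), via Harnack, and then I would proceed with the bootstrap.
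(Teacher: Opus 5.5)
Your overall plan is sound, and your fallback is exactly the first stage of the paper's proof. However, the barrier as written is not a supersolution, for the following reasons.

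(i) Sign. Under the source-type condition \(-\partial_n u=Vu\), an upper barrier needs \(-\partial_n\Phi\ge V\Phi\) on \(\{x_n=0\}\). The comparison is the touching argument: take the least \(\tau\) with \(\tau\Phi\ge u\). At a flat-boundary zero of \(\tau\Phi-u\), Hopf gives \(-\partial_n(\tau\Phi-u)<0\), which contradicts \(-\partial_n(\tau\Phi-u)=\tau(-\partial_n\Phi-V\Phi)\ge0\). Your correction gives \(-\partial_n\Phi=-aM|x|^{-\gamma-1}<0\) for \(a>0\), so the inequality fails. You need \(\Phi=M|x|^{-\gamma}(1-b\,x_n/|x|)\) with \(b>0\), i.e.\ \(\Phi\) decreasing into the half-space.

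(ii) The correction is not nearly harmonic. One has \(\Delta(x_n|x|^{-\gamma-1})=-(1+\gamma)(n-1-\gamma)x_n|x|^{-\gamma-3}\). With the correct sign it therefore costs a negative term of order \(b\), not \(b\gamma\), in \(-\Delta\Phi\). This is absorbed by \(\gamma(n-2-\gamma)|x|^{-\gamma-2}\) only when \(b\le c_n\gamma\). Your ordering still permits this: fix \(\gamma\), then \(b\simeq\gamma/C\), then \(r\) with \(\sup_{B_r}|x|V<b/2\).

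(iii) The term \(\varepsilon|x|^{2-n}\) is not a Robin supersolution. Its normal derivative vanishes on \(\{x_n=0\}\), while \(V\varepsilon|x|^{2-n}\) can be as large as \(\eta\,\varepsilon|x|^{1-n}\), which \(bM|x|^{-\gamma-1}\) cannot absorb near the origin. Since \(|x|^{2-n}\) is harmonic, there is also no interior margin to pay for a normal correction. Use instead \(\varepsilon|x|^{-\beta}(1-b'x_n/|x|)\) with \(\frac{n-2}{2}<\beta<n-2\). It still dominates \(u=o(|x|^{-\frac{n-2}{2}})\) on small spheres and carries its own margin \(\beta(n-2-\beta)\).

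(iv) The comparison step. Strictness alone cannot exclude a negative minimum of \(\Phi-u\), because there \(-V(\Phi-u)>0\) competes with Hopf; you need the touching argument in (i). The lower-barrier comparison in Section~\ref{sec:upper_bound} that you cite does not transfer. There the nonlinearity enters as a nonnegative source, not as a zero-order coefficient of unfavorable sign.

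After these repairs, or via your fallback, you have \(u\le C|x|^{-\gamma}\) with \(\gamma<\frac{n-2}{2}\), and this is where your route and the paper's diverge. The paper never uses a barrier. From the averaged ODE \(w_{tt}-\alpha^2w=hw\) with \(h\to0\), it first obtains \(w\le C_\varepsilon e^{-\mu t}\), \(\mu=\sqrt{\alpha^2-\varepsilon}\); this is your fallback. It then feeds \(|hw|\le Ce^{-\sigma s}\), with \(\sigma=\min(\mu p,1+\mu)>\alpha\), into the variation-of-parameters representation. This gives the sharp rate \(w\le Ce^{-\alpha t}\), i.e.\ \(u\) bounded outright. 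The rest is elementary: \(|\nabla u|\le C|x|^{-1}\), the flux through \(\partial''\mathcal B_\varepsilon^+\) is \(O(\varepsilon^{n-2})\), and linear Robin regularity applies with bounded data.

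You instead stop at the subcritical rate and replace the second ODE stage by \(H^1\), the zero capacity of a point, and a Moser (local maximum principle) argument with an \(L^q\) boundary potential, \(q>n-1\). That is correct and standard, but heavier than the paper's short ODE upgrade. In exchange, the corrected barrier obtains the decay from the pointwise smallness of \(|x|u^{2/(n-2)}\) on the flat boundary alone, without spherical averages or the comparability \(u\asymp\bar u\). You still use Corollary~\ref{cor:gradient} for the \(H^1\) bound.
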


			\begin{proof}
				Set
				\begin{equation*}
					\bar u(r):=\frac{1}{\omega_n' r^{n-1}}\int_{\partial''\mathcal{B}_r^+} u d\sigma,
					\qquad \omega_n':=|\partial''\mathcal{B}_1^+|,
				\end{equation*}
				The Harnack inequality \eqref{eq:Harnack} implies
				\begin{equation}\label{eq:pointwise}
					\Lambda^{-1}\bar u(r)\le u(x)\le \Lambda\bar u(r),\qquad
					|x|=r,\;x\in\overline{\mathcal{B}_r^+}
				\end{equation}
				for all sufficiently small \(r>0\).

				With the same error notation, the equation and boundary condition are written as
				\begin{equation*}
					-\Delta u=\mathcal E_{\rm int},
					\qquad
					-\partial_n u=u^{\frac n{n-2}}+\mathcal E_{\rm bdy}
					\quad\text{on }\{x_n=0\},
				\end{equation*}
				where, in the normalized coordinates,
				\begin{equation}\label{eq:errors}
					|\mathcal E_{\rm int}(x)|\le C|x|^{-1}u(x),
					\qquad
					|\mathcal E_{\rm bdy}(x)|\le C u(x).
				\end{equation}

				Integrating \(\Delta u = -\mathcal E_{\rm int}\) over \(\mathcal{B}_r^+\) and applying the divergence theorem give
				\begin{equation*}
					\int_{\partial''\mathcal{B}_r^+} \partial_r u d\sigma
					- \int_{\partial B_r\cap\{x_n=0\}} \partial_n u dx'
					= -\int_{\mathcal{B}_r^+} \mathcal E_{\rm int} dx.
				\end{equation*}
				Inserting the boundary condition on the flat part yields
				\begin{equation}\label{eq:integrated_avg_identity}
					\int_{\partial''\mathcal{B}_r^+} \partial_r u\,d\sigma
					+ \int_{\partial'\mathcal{B}_r^+} u^{\frac{n}{n-2}}\,dx'
					=
					-\int_{\mathcal{B}_r^+} \mathcal E_{\rm int}\,dx
					-\int_{\partial'\mathcal{B}_r^+} \mathcal E_{\rm bdy}\,dx'.
				\end{equation}
				The first term is precisely \(\omega_n' r^{n-1}\bar u'(r)\).

				Differentiating \eqref{eq:integrated_avg_identity} with respect to \(r\) gives
				\begin{equation}\label{eq:barODE}
					\frac{d}{dr}\bigl(r^{n-1}\bar u'(r)\bigr)
					+\frac{1}{\omega_n'}\int_{\partial B_r\cap\{x_n=0\}} u^{\frac{n}{n-2}}ds'
					= H(r),
				\end{equation}
				where
				\begin{equation*}
					H(r):=-\frac{1}{\omega_n'}\frac{d}{dr}\left(\int_{\mathcal{B}_r^+}
					\mathcal E_{\rm int} dx +\int_{\partial'\mathcal{B}_r^+}
					\mathcal E_{\rm bdy} ds'\right).
				\end{equation*}

				Using \eqref{eq:errors}, \eqref{eq:pointwise}, and the fact that the surface area of the equator is \(\omega_{n-2}r^{n-2}\), one obtains
				\begin{equation*}
					|H(r)|\le C\left(r^{n-1}\,r^{-1}\bar u(r)+r^{n-2}\bar u(r)\right)\le C r^{n-2}\bar u(r).
				\end{equation*}

				The same Harnack inequality gives
				\begin{equation}\label{eq:boundary_nonlinear_average}
					c_0 r^{n-2}\bar u(r)^{\frac{n}{n-2}}
					\le \int_{\partial B_r\cap\{x_n=0\}} u^{\frac{n}{n-2}}ds'
					\le C_0 r^{n-2}\bar u(r)^{\frac{n}{n-2}}.
				\end{equation}

				Combining \eqref{eq:barODE}--\eqref{eq:boundary_nonlinear_average} gives
				\begin{equation*}
					\frac{d}{dr}\bigl(r^{n-1}\bar u'(r)\bigr)
					\le -c_1 r^{n-2}\bar u(r)^{\frac{n}{n-2}} + C r^{n-2}\bar u(r),
				\end{equation*}
				where \(c_1,C>0\) depend only on \(n,\Lambda\) and the metric.

				\medskip
				\noindent\textbf{The \(C^0\) estimate.}

				Introduce the cylindrical variables
				\begin{equation*}
					t=-\ln r,\qquad
					\alpha=\frac{n-2}{2},
					\qquad
					w(t)=e^{-\alpha t}\bar u(e^{-t})=r^\alpha\bar u(r).
				\end{equation*}
				A direct computation gives
				\begin{equation*}
					\bar u(r)=r^{-\alpha}w(t),
					\qquad
					\bar u'(r)=-r^{-\alpha-1}\bigl(w_t(t)+\alpha w(t)\bigr),
				\end{equation*}
				and hence
				\begin{equation*}
					\frac{d}{dr}\bigl(r^{n-1}\bar u'(r)\bigr)
					=r^{\frac{n-4}{2}}\bigl(w_{tt}(t)-\alpha^2w(t)\bigr).
				\end{equation*}

				Moreover,
				\begin{align*}
					r^{n-2}\bar u(r)^{\frac n{n-2}}
					&=r^{n-2-\alpha\frac n{n-2}}w(t)^{\frac n{n-2}}
					=r^{\frac{n-4}{2}}w(t)^{\frac n{n-2}},\\
					r^{n-2}\bar u(r)
					&=r^{\frac{n-4}{2}}e^{-t}w(t).
				\end{align*}
				Thus \eqref{eq:barODE} is equivalent to
				\begin{equation}\label{eq:asymp_ODE}
					w_{tt}(t)-\alpha^2w(t)=h(t)w(t),
					\qquad t\ge T_0,
				\end{equation}
				where
				\begin{equation}\label{eq:h_potential_bound}
					h(t)=-\beta(t)w(t)^{\frac2{n-2}}+O(e^{-t}),
					\qquad
					0<c_0\le \beta(t)\le C_0.
				\end{equation}
				The assumption \(\lim_{x\to0}|x|^\alpha u(x)=0\), together with \eqref{eq:pointwise}, gives \(\lim_{t\to\infty}w(t)=0\), which immediately implies the vanishing of the potential, namely, \(\lim_{t\to\infty}h(t)=0\).

				The estimate for \(w\) is obtained in two stages: first a suboptimal exponential decay is derived from the maximum principle, and then the sharp critical decay rate \(\alpha\) is recovered from the exact Green representation.

				Let \(0<\varepsilon<\alpha^2\) and choose \(T_\varepsilon\ge T_0\) such that
				\(|h(t)|\le\varepsilon\) for all \(t\ge T_\varepsilon\). Then
				\begin{equation*}
					w_{tt}(t)\ge (\alpha^2-\varepsilon)w(t),
					\qquad t\ge T_\varepsilon.
				\end{equation*}
				With \(\mu=\sqrt{\alpha^2-\varepsilon}\), fix \(S>T_\varepsilon\) and set
				\[
				\Phi_S(t)
				:=w(T_\varepsilon)e^{-\mu(t-T_\varepsilon)}
				+w(S)e^{-\mu(S-t)},
				\qquad T_\varepsilon\le t\le S.
				\]
				Then \(\Phi_S''=\mu^2\Phi_S\), and the endpoint comparisons are
				\[
				\Phi_S(T_\varepsilon)\ge w(T_\varepsilon),
				\qquad
				\Phi_S(S)\ge w(S).
				\]
				For \(z_S=w-\Phi_S\), one has
				\[
				z_S''-\mu^2z_S\ge0
				\quad\text{on }[T_\varepsilon,S],
				\qquad
				z_S(T_\varepsilon)\le0,\quad z_S(S)\le0.
				\]
				The maximum principle gives \(z_S\le0\) on
				\([T_\varepsilon,S]\). Therefore, for \(T_\varepsilon\le t\le S\),
				\[
				w(t)
				\le w(T_\varepsilon)e^{-\mu(t-T_\varepsilon)}
				+w(S)e^{-\mu(S-t)}.
				\]
				Fixing \(t\) and letting \(S\to\infty\), the second term tends to zero
				because \(w(S)\to0\). Hence
				\begin{equation}\label{eq:first_exp_decay}
					0<w(t)\le C_\varepsilon e^{-\mu t},
					\qquad t\ge T_\varepsilon.
				\end{equation}

				Variation of parameters applied to \eqref{eq:asymp_ODE} yields
				\begin{equation}\label{eq:integral_rep}
					w(t)=B e^{-\alpha t}
					-\frac1{2\alpha}\int_t^\infty e^{\alpha(t-s)}h(s)w(s)\,ds
					-\frac1{2\alpha}\int_{T_0}^t e^{-\alpha(t-s)}h(s)w(s)\,ds.
				\end{equation}
				The growing homogeneous mode is absent because \(w(t)\to0\). From
				\eqref{eq:h_potential_bound} and \eqref{eq:first_exp_decay},
				\begin{equation*}
					|h(s)w(s)|
					\le Cw(s)^{\frac n{n-2}}+Ce^{-s}w(s)
					\le C e^{-\sigma s},
				\end{equation*}
				where \(\sigma = \min(\mu p, 1+\mu)\). Choosing \(\varepsilon\) sufficiently small makes \(\mu\) arbitrarily close to \(\alpha\), hence \(\sigma > \alpha\). Therefore
				\begin{align*}
					\int_t^\infty e^{\alpha(t-s)} |h(s) w(s)|   ds \le C e^{\alpha t} \int_t^\infty e^{-(\alpha+\sigma)s}   ds = \frac{C}{\alpha+\sigma} e^{-\sigma t},\\
					\int_{T_0}^t e^{-\alpha(t-s)} |h(s) w(s)|   ds \le C e^{-\alpha t} \int_{T_0}^t e^{(\alpha-\sigma)s}   ds \le C' e^{-\alpha t}.
				\end{align*}
				Substitution into \eqref{eq:integral_rep} gives
				\begin{equation*}
					w(t) \le B e^{-\alpha t} + \tilde{C} e^{-\sigma t} + C' e^{-\alpha t} \le C e^{-\alpha t}.
				\end{equation*}
				Hence
				\begin{equation*}
					\bar u(r)=r^{-\alpha}w(-\log r)\le C,
					\qquad 0<r<r_0/2,
				\end{equation*}
				Coupling this with the pointwise equivalence \(u(x) \asymp \bar{u}(|x|)\), yields the uniform \(C^0\) bound near the origin
				\begin{equation*}
					u(x) \le C, \qquad \text{for } 0 < |x| < \frac{r_0}{2}.
				\end{equation*}

				The scale-invariant gradient estimate from Corollary~\ref{cor:gradient}, applied
				with exponent \(\beta=0\), gives
				\begin{equation}\label{eq:bounded_grad_estimate}
					|\nabla u(x)|\le C|x|^{-1},
					\qquad 0<|x|<r_0/2.
				\end{equation}
				Thus \(\nabla_g u\in L^2(\mathcal B_{r_0/2}^+)\) by \(n\ge3\).

				It remains to pass through the isolated point in the weak formulation. For every
				\(\varphi\in C_c^\infty(\overline{\mathcal B_{r_0/2}^+})\), the weak formulation is
				\begin{equation*}
					\int_{\mathcal B_{r_0/2}^+}
					\bigl(\langle\nabla_g u,\nabla_g\varphi\rangle_g+c(n)R_g u\varphi\bigr) dV_g
					+\int_{\partial'\mathcal B_{r_0/2}^+}\frac{n-2}{2}h_g u\varphi d\sigma_g
					=\int_{\partial'\mathcal B_{r_0/2}^+}u^{\frac n{n-2}}\varphi d\sigma_g.
				\end{equation*}
				It is enough to prove this identity by cutting out the singularity. Set
				\begin{equation*}
					\Omega_\varepsilon:=\mathcal B_{r_0/2}^+\setminus \overline{\mathcal B_\varepsilon^+}.
				\end{equation*}
				Using \(-\Delta_g u+c(n)R_gu=0\) in \(\Omega_\varepsilon\), Green's first identity gives
				\begin{equation*}
					\int_{\Omega_\varepsilon}
					\bigl(\langle\nabla_g u,\nabla_g\varphi\rangle_g+c(n)R_g u\varphi\bigr) dV_g=\int_{\partial\Omega_\varepsilon}\varphi \partial_{\nu_\Omega}u d\sigma_g
				\end{equation*}
				On the flat boundary,
				\begin{equation*}
					\partial_{\nu_g}u=u^{\frac n{n-2}}-\frac{n-2}{2}h_gu.
				\end{equation*}
				Therefore
				\begin{equation*}
					\begin{aligned}
						&\int_{\Omega_\varepsilon}\bigl(\langle\nabla_g u,\nabla_g\varphi\rangle_g
						+c(n)R_g u\varphi\bigr) dV_g+\int_{\partial'\mathcal B_{r_0/2}^+\setminus \mathcal B_\varepsilon}\frac{n-2}{2}h_g u\varphi d\sigma_g  \\
						&\qquad =\int_{\partial'\mathcal B_{r_0/2}^+\setminus \mathcal B_\varepsilon}u^{\frac n{n-2}}\varphi d\sigma_g+\mathcal I_\varepsilon,
					\end{aligned}
				\end{equation*}
				where
				\begin{equation*}
					\mathcal I_\varepsilon=-\int_{\partial''\mathcal B_\varepsilon^+}\varphi \frac{\partial u}{\partial \nu_\varepsilon} d\sigma_g
				\end{equation*}
				and here \(\nu_\varepsilon\) is the outward unit normal of \(\mathcal B_\varepsilon^+\).
				By \eqref{eq:bounded_grad_estimate},
				\begin{equation*}
					|\mathcal I_\varepsilon|
					\le C\|\varphi\|_{L^\infty}\varepsilon^{-1}\varepsilon^{n-1}
					=C\|\varphi\|_{L^\infty}\varepsilon^{n-2}\to0.
				\end{equation*}
				Letting \(\varepsilon\to0\) gives the weak boundary Yamabe equation on
				\(\mathcal B_{r_0/2}^+\).

				Since \(u\in L^\infty\), the boundary datum \(u^{n/(n-2)}\) is bounded. Standard local boundary regularity for Robin boundary conditions gives
				\(u\in W^{2,p}_{\rm loc}(\overline{\mathcal B_{r_0/2}^+})\) for every \(p<\infty\). Taking \(p>n\) gives \(u\in C^{1,\alpha}\). Since \(n=3,4\), the boundary nonlinearity \(u^{n/(n-2)}\) is smooth in \(u\), and Schauder bootstrapping yields \(u\in C^\infty(\overline{\mathcal B_{r_0/4}^+})\). Thus the singularity at the origin is smoothly removable.
			\end{proof}
		\subsubsection{Behavior of non-removable singularities}
		Theorem~\ref{thm:upper_bound}, Lemma~\ref{lem:removable}, and Lemma~\ref{lem:removable_criterion} imply that any non-removable singularity satisfies the two-sided estimate
		\begin{equation}\label{eq:two_sided_bound}
			c_0|x|^{\frac{2-n}{2}}\le u(x)\le C_0|x|^{\frac{2-n}{2}}\qquad\text{for }0<|x|\le r_0,
		\end{equation}
		with constants \(0<c_0\le C_0<\infty\). Together with the scale-invariant local gradient estimate, it also gives
		\begin{equation}\label{eq:sec3_log_derivative_bound}
			|\nabla\log u(x)|\le C|x|^{-1}
			\qquad\text{for }0<|x|\le r_0 .
		\end{equation}
		This estimate will be improved below to obtain the asymptotic cylindrical symmetry
		of non-removable singularities.

		\begin{lem}\label{lem:asymptotic_symmetry}
			Let \(u\) be a positive solution of the boundary Yamabe equation with an isolated non-removable singularity at the origin. Then there exist constants \(C>0\) and \(\sigma\in(0,1)\) such that for all \(x \in \mathcal{B}_{r_0/2}^+ \setminus \{0\}\),
			\[
			|\nabla_\tau\log u(x)|\le C|x|^{\sigma-1}
			\]
			for every horizontal unit vector \(\tau\) orthogonal to \(x'\). Consequently,
			\[
			u(x)=\bar u(|x'|,x_n)(1+O(|x|^\sigma)),
			\]
			where \(\bar u(|x'|,x_n)\) denotes the average of \(u\) over the horizontal sphere
			\(\{(y',x_n): |y'|=|x'|\}\).
		\end{lem}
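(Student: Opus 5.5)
The plan is to adapt the Caffarelli--Gidas--Spruck/Li moving-spheres proof of asymptotic symmetry, with spheres centered at boundary points close to the singularity. The boundary must be kept invariant, so only centers \(z=(z',0)\) are allowed; this is exactly why the conclusion concerns only the tangential directions \(\tau\perp x'\).

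Work in the normalized coordinates \eqref{eq:metric_expansion}. Fix \(z'\in\mathbb R^{n-1}\) with \(|z'|\le |x|^{1+\sigma'}\)-type smallness, to be optimized. For \(0<\lambda<|z'|\), compare \(u\) with its Kelvin transform
\[
u_{z,\lambda}(x)=\Bigl(\frac{\lambda}{|x-z|}\Bigr)^{n-2}u\Bigl(z+\frac{\lambda^2(x-z)}{|x-z|^2}\Bigr)
\]
on \(\{x\in\mathcal B^+_{r_0}:|x-z|>\lambda\}\setminus\{0\}\). The goal is
\begin{equation*}
u_{z,\lambda}+h_{z,\lambda}\le u \qquad\text{for all }0<\lambda<|z'|,
\end{equation*}
where \(h_{z,\lambda}\) is an auxiliary correction of the same type as in Step~3 of Section~\ref{sec:upper_bound}. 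It is built from the radial kernels \(F_n,G_n\) and a term \(y_nh_2\), and absorbs the geometric errors \(E_\lambda,E_\lambda^\partial\) of Proposition~\ref{prop:El-estimate}. Here those errors are sized by \(|z|\) and by \(\lambda\) instead of \(\varepsilon_k\), which is natural since \(|g_{ij}-\delta_{ij}|\le C|x|\) and \(|h_g|\le C|x|\).

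The steps, in order, are as follows.

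\textbf{(i) Start the spheres.} For \(\lambda\) small, \(u_{z,\lambda}\le u\) near \(\partial\mathcal B_\lambda(z)\). This follows from the scale-invariant gradient estimate \eqref{eq:sec3_log_derivative_bound} near \(z\) and the lower bound \eqref{eq:two_sided_bound} elsewhere.

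\textbf{(ii) The singular point.} Because the singularity is non-removable, \eqref{eq:two_sided_bound} gives \(u\ge c_0|x|^{(2-n)/2}\to\infty\) at \(0\). The reflected function and \(h_{z,\lambda}\) stay bounded there, so \(0\) is never a touching point. On the outer sphere \(|x|=r_0\), the inequality \(u_{z,\lambda}\le C\lambda^{n-2}\le u\) holds for \(|z'|\) small.

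\textbf{(iii) Continuation.} Continue \(\lambda\) up to \(|z'|\) using the strong maximum principle with the \(e^{-\alpha y_1}\) trick from Step~5 and the Hopf lemma on the flat boundary. The latter requires \(\frac{\partial}{\partial\nu}-c_\lambda\) with \(c_\lambda\) of the right sign, which is where the boundary term \(y_nh_2\) enters. Use the narrow-domain argument near the sphere.

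\textbf{(iv) From spheres to a gradient bound.} Once \(u_{z,\lambda}+h_{z,\lambda}\le u\) is known for all boundary centers with \(|z'|\le \delta|x|\) and all \(\lambda<|z'|\), apply the standard calculus lemma of Li--Zhang type. Letting \(\lambda\to|z'|\) and differentiating in \(z'\) gives, at a point \(x\) with \(|x|=r\), that the tangential derivative of \(u\) orthogonal to \(x'\) is controlled by the error \(|\nabla h|\). Comparing at scale \(r\), with \(h\) of relative size \(O(r^{\sigma})\), yields \(|\nabla_\tau\log u(x)|\le C|x|^{\sigma-1}\). The factor \(|x|^{-1}\) comes from normalizing by \(u\asymp r^{-\alpha}\). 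The choice of \(|z'|\) as a power \(r^{1+\theta}\) balancing the two error sources fixes \(\sigma\in(0,1)\).

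\textbf{(v) Integrate.} Integrate \(\nabla_\tau\log u\) along the horizontal sphere \(\{(y',x_n):|y'|=|x'|\}\), whose diameter is at most \(2|x|\). This gives
\[
\sup_{\text{slice}}\log u-\inf_{\text{slice}}\log u\le C|x|^{\sigma},
\]
hence \(u=\bar u(|x'|,x_n)(1+O(|x|^\sigma))\).

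The main obstacle is step (iii), specifically the construction of the barrier \(h_{z,\lambda}\) and its far-region control when the center \(z\) is off the singularity. The Kelvin-transform errors are only first order, of size \(|x|\cdot|x-z|^{1-n}\)-type, and so is the mean-curvature error. The correction must be \(O(|x|^\sigma)\) relative to \(u\sim|x|^{-\alpha}\) on the whole domain, while keeping \(c_\lambda\ge0\) on the flat boundary. I would first try the same kernels \(F_n,G_n,y_nh_2\) rescaled to scale \(|z|\), and use the two-sided bound \eqref{eq:two_sided_bound} to dominate \(|h|\) in the far region. As in the upper-bound argument, this closes only for \(n=3,4\), since \(\alpha\le1\) keeps \(|h|\lesssim |z|\log\) or \(|z|\) below \(c_0|x|^{-\alpha}\).
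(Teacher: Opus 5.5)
There is a genuine gap in your step (iv), and it comes from the choice of scales. With boundary centers \(z\) satisfying \(|z'|\le\delta|x|\) (or \(|z'|\sim|x|^{1+\theta}\)) and radii \(\lambda<|z'|\), every sphere \(\partial B_\lambda(z)\) lies in \(B_{2|z'|}\), well away from \(x\). At \(x\) the reflected function is negligible. The point \(x^\lambda\) lies within \(\lambda^2/|x-z|\ll|z'|\) of \(z\), so the two-sided bound gives \(u_{z,\lambda}(x)\le C(|z'|/|x|)^{\alpha_0}u(x)\), where \(\alpha_0=\frac{n-2}{2}\). The comparison \(u_{z,\lambda}+h_{z,\lambda}\le u\) is therefore strict at \(x\) by a wide margin, and it says nothing about \(\nabla_\tau u(x)\). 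Differentiating a strict inequality in \(z'\) yields nothing.

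A Li--Zhang type argument extracts derivatives only where the comparison is tight, i.e.\ on (or within \(\varepsilon R\) of) the sphere. There it gives \(\frac{n-2}{2}u(x)+\nabla u(x)\cdot(x-x_0)\ge-(\text{barrier and Taylor errors})\), with center \(x_0\) and radius \(R\approx|x-x_0|\). After dividing by \(Ru(x)\), the zeroth-order term \(\frac{n-2}{2}R^{-1}\) survives, and it is not controlled by \(|\nabla h|\). So spheres of radius \(\lesssim|x|\) reproduce at best the trivial bound \(|\nabla\log u|\le C|x|^{-1}\). A gain needs spheres that pass (nearly) through both \(x\) and the singular point and have radius \(R\gg|x|\). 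Such spheres approximate the vertical hyperplane \(\{y\cdot\tau=0\}\) through \(0\) and \(x\). That requirement, not only boundary invariance, is why \(\tau\perp x'\).

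This is what the paper does. It takes \(x_0=(x'-R\tau,0)\) with \(R=|x|^\beta\), \(\beta\in(0,1)\), so that \(\bigl||x_0|-|x-x_0|\bigr|\le C|x|^2/R\). Claim~\ref{claim:ms_input} gives the perturbed comparison on \(\{\lambda<|y-x_0|<K|x_0|\}\setminus\{0\}\), with two restrictions:
\begin{enumerate}
\item It holds only for \(\theta_*a\le\lambda\le a(1-C_0a^\gamma)\), where \(a=|x_0|\).
\item The barriers vanish linearly on the sphere, \(|H_\lambda|+\Psi_\lambda\le C(r_y-\lambda)|y|^{-\alpha_0}\). Near the singular point this requires the dyadic transition barrier \(\Psi_\lambda\), which rescaled copies of \(F_n,G_n,y_nh_2\) do not supply.
\end{enumerate}
The paper applies this at \(x\) with \(\lambda=|x-x_0|(1-\varepsilon)\) and \(\varepsilon\sim|x|^2/R^2+R^\gamma\). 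That gives \(|\nabla_\tau\log u(x)|\le C(R^{-1}+R^{1+\gamma}|x|^{-2})\), and balancing the two terms yields \(R=|x|^{2/(2+\gamma)}\) and \(\sigma=\gamma/(2+\gamma)\).

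Your aim in (iii), continuing the spheres all the way to \(\lambda=|z'|\), is also not what is available. The loss \(a^{1+\gamma}\) in the admissible radius is exactly what forces \(R\) to be a power of \(|x|\) strictly between \(|x|\) and \(1\). That is the opposite of your choice \(|z'|\sim|x|^{1+\theta}\). Your step (v) is fine once the tangential bound is in hand.
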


		\begin{proof}
			Set \(\alpha_0:=(n-2)/2\). Fix
			\(x=(x',x_n)\in\mathcal{B}_{r_0/2}^+\setminus\{0\}\). Let
			\(\tau\in\mathbb{R}^{n-1}\times\{0\}\) be a horizontal unit vector such thaty \(\tau\cdot x'=0\). If \(x'=0\), any horizontal
			unit vector may be chosen. The moving-sphere centre is taken to be
			\(x_0=(x'-R\tau,0)\in\partial'\mathbb{R}^n_+\), where the auxiliary scale is
			\(R=|x|^\beta\) for a parameter \(\beta\in(0,1)\) to be optimized.

			Set
			$a=|x_0|$ and $ r=|x-x_0|$. Since \(\tau\perp x'\), we have
			\[
			a^2=|x'|^2+R^2,
			\qquad
			r^2=|R\tau+x_ne_n|^2=R^2+x_n^2.
			\]
			Consequently,
			\begin{equation*}
				|a-r|=\frac{|a^2-r^2|}{a+r}\le C\frac{|x|^2}{R}.
			\end{equation*}

			We shall use the following perturbed moving-spheres comparison.

			\begin{claim}\label{claim:ms_input}
				Let \(0<\theta_*<\frac14\) and
				\[
				\Sigma_\lambda
				=\{y\in\mathbb R^n_+:\lambda<|y-x_0|<Ka\}\setminus\{0\},
				\]
				where \(K\gg1\) is fixed. There exist a universal \(\gamma\in(0,1)\)
				and a constant \(C_0>0\) such that, for every
				\(\theta_*a\le\lambda\le a(1-C_0a^\gamma)\), one can find a negative
				regular barrier \(H_\lambda\le0\) and a nonnegative transition barrier
				\(\Psi_\lambda\ge0\) for which the reflected function
				\(u_{x_0,\lambda}\) satisfies
				\begin{equation}\label{eq:sec3_ms_input_used}
					u_{x_0,\lambda}(y) \le u(y) + |H_\lambda(y)| + \Psi_\lambda(y)
					\qquad \text{for all } y \in \Sigma_\lambda.
				\end{equation}
				Here
				\[
				u_{x_0,\lambda}(y)=\left(\frac{\lambda}{|y-x_0|}\right)^{n-2}
				u\left(x_0+\frac{\lambda^2(y-x_0)}{|y-x_0|^2}\right),
				\]
				and the barriers satisfy \(H_\lambda=\Psi_\lambda=0\) on \(|y-x_0|=\lambda\). Moreover,
				with \(r_y:=|y-x_0|\),
				\begin{equation}\label{eq:sec3_ms_barrier_eval_input}
					|H_\lambda(y)|+\Psi_\lambda(y)
					\le C(r_y-\lambda)|y|^{-\alpha_0}
					\qquad\text{for }y\in\Sigma_\lambda.
				\end{equation}
			\end{claim}

			Assume Claim~\ref{claim:ms_input}. We choose the inversion radius in the form
			\[
			\lambda=r(1-\varepsilon).
			\]
			To apply the claim, it is enough to ensure
			\[
			r(1-\varepsilon)\le a(1-C_0a^\gamma).
			\]
			Since \(r-a\le C|x|^2/R\), this condition follows from
			\[
			r\varepsilon\ge r-a+C_0a^{1+\gamma}.
			\] Thus, choosing
			\begin{equation}\label{eq:sec3_epsilon_choice}
				\varepsilon=C_2\left(\frac{|x|^2}{R^2}+R^\gamma\right),
			\end{equation}
			with \(C_2\) sufficiently large. For \(|x|\) sufficiently small, it also gives \(\varepsilon<1/4\) and
			\(\lambda\ge a/2\). Hence \eqref{eq:sec3_ms_input_used} can be applied at \(y=x\).

			Expanding the Kelvin transform \(u_{x_0,\lambda}(x)=(1-\varepsilon)^{n-2}u(x^\lambda)\) around \(x\), the reflected point is
			\[
			x^\lambda=x_0+\frac{\lambda^2(x-x_0)}{|x-x_0|^2}
			=x-2\varepsilon(x-x_0)+O(\varepsilon^2R).
			\]
			With the choice of \(R\) made below, one has \(\varepsilon R=o(|x|)\). Hence the Taylor segment remains in an annulus comparable to \(|x|\). With
			\[
			\left(\frac{\lambda}{r}\right)^{n-2}=1-(n-2)\varepsilon+O(\varepsilon^2),
			\]
			one has
			\[
			u_{x_0,\lambda}(x)
			=u(x)-(n-2)\varepsilon u(x)
			-2\varepsilon\nabla u(x)\cdot(x-x_0)
			+O(\varepsilon^2R^2|x|^{-2}u(x)).
			\]
			Substitution into \eqref{eq:sec3_ms_input_used} gives
			\begin{equation}\label{eq:sec3_taylor_difference_quotient}
				\frac{n-2}{2}u(x)+\nabla u(x)\cdot(x-x_0)
				\ge
				-C\varepsilon R^2|x|^{-2}u(x)
				-\frac{|H_\lambda(x)|+\Psi_\lambda(x)}{2\varepsilon}.
			\end{equation}

			At the point \(y=x\), one has \(|x-x_0|-\lambda=\varepsilon r\sim\varepsilon R\).
			The barrier estimate in Claim~\ref{claim:ms_input}, together with the lower
			bound in \eqref{eq:two_sided_bound}, therefore gives
			\begin{equation}\label{eq:sec3_barrier_eval_bound}
				|H_\lambda(x)|+\Psi_\lambda(x)
				\le C\varepsilon R|x|^{-\alpha_0}
				\le C\varepsilon R u(x).
			\end{equation}

			Writing \(x-x_0=R\tau+x_ne_n\), dividing
			\eqref{eq:sec3_taylor_difference_quotient} by \(Ru(x)\), using
			\eqref{eq:sec3_barrier_eval_bound} and \eqref{eq:sec3_log_derivative_bound}, yields
			\[
			\nabla_\tau\log u(x)
			\ge
			-C\left(R^{-1}+\varepsilon R|x|^{-2}\right).
			\]
			After division by \(Ru(x)\), the zeroth-order term and the normal derivative term are bounded by \(CR^{-1}\), while the barrier contribution is \(O(1)\), hence also \(O(R^{-1})\) since \(R<1\).
			Repeating the same argument with \(-\tau\) gives
			\[
			|\nabla_\tau\log u(x)|
			\le C\left(R^{-1}+\varepsilon R|x|^{-2}\right).
			\]
			Using \eqref{eq:sec3_epsilon_choice},
			\[
			|\nabla_\tau\log u(x)|
			\le C\left(R^{-1}+R^{1+\gamma}|x|^{-2}\right),
			\]
			since the contribution of \(|x|^2/R^2\) is again \(R^{-1}\).

			Balancing the error scales, namely, \(R^{-1}=R^{1+\gamma}|x|^{-2}\), gives \(R=|x|^{\frac{2}{2+\gamma}}\). With this radius, \(R^{-1}=|x|^{\sigma-1}\), where \(\sigma=\frac{\gamma}{2+\gamma}\in(0,1)\). Therefore
			\[
			|\nabla_\tau \log u(x)| \le C_4 |x|^{\sigma-1}.
			\]

			For \(x=(x',x_n)\) with \(x'\ne0\), consider the horizontal sphere
			\begin{equation*}
				\Sigma_x:=\{(y',x_n): |y'|=|x'|\}.
			\end{equation*}
			This is an \((n-2)\)-dimensional sphere inside the horizontal hyperplane \(\{y_n=x_n\}\), not an \((n-2)\)-dimensional slice of the ambient half-space. Any two points \(x_1,x_2\in\Sigma_x\) can be joined by a great-circle arc lying entirely in \(\Sigma_x\).
			Its tangent \(\tau\) is everywhere horizontal and orthogonal to the horizontal radial direction. Integrating the gradient bound along this arc gives
			\[
			|\log u(x_1)-\log u(x_2)|
			\le \int_{\text{arc}} |\nabla_\tau \log u|\,ds
			\le \pi C_4 |x|^\sigma.
			\]

			Exponentiating gives \(e^{-\pi C_4 |x|^{\sigma}} \le u(x_1)/u(x_2) \le e^{\pi C_4 |x|^{\sigma}}\). Hence \(u(x)=\bar u(|x'|,x_n)(1+O(|x|^\sigma))\), where \(\bar u(|x'|,x_n)\) denotes the average over \(\Sigma_x\). If \(x'=0\), the horizontal sphere degenerates to a single point, and the assertion is immediate.
		\end{proof}

		\subsection{The perturbed moving-sphere method: proof of Claim~\ref{claim:ms_input}}

		It remains to prove the perturbed moving-spheres comparison used above. The argument follows the same decomposition as in Step~4 of Section~\ref{sec:upper_bound}. The new ingredients are the Kelvin-transform error estimates and the transition barrier near the singular point.

		Throughout this subsection, let
		\[
		\alpha_0:=\frac{n-2}{2},
		\qquad p:=\frac n{n-2},
		\]
		and
		\[
		x_0\in\partial'\mathbb R^n_+,
		\qquad a:=|x_0|,
		\qquad r:=|y-x_0|.
		\]
		For \(\lambda>0\), define
		\[
		y^\lambda:=x_0+\frac{\lambda^2(y-x_0)}{|y-x_0|^2},
		\qquad
		u_{x_0,\lambda}(y):=\left(\frac{\lambda}{r}\right)^{n-2}u(y^\lambda).
		\]
		The comparison region is
		\[
		\Sigma_\lambda:=\{y\in\mathbb R^n_+:\lambda<|y-x_0|<Ka\}\setminus\{0\},
		\]
		where \(K\gg1\) is fixed once and for all.

		In the normalized coordinates, the equation may be written as
		\begin{equation*}
			-\Delta u=\mathcal E_{\rm int}
			\quad\text{in }\mathbb R^n_+,
			\qquad
			-\partial_n u=u^p+\mathcal E_{\rm bdy}
			\quad\text{on }\partial'\mathbb R^n_+.
		\end{equation*}
		The metric expansion \eqref{eq:metric_expansion}, the two-sided estimate, and the
		scale-invariant local estimates,
		\[
		|\nabla^m u(z)|\le C_m|z|^{-\alpha_0-m},\qquad m=0,1,2,3,
		\]
		give
		\begin{equation}\label{eq:sec3_error_derivative_bounds}
			|\mathcal E_{\rm int}(z)|\le C|z|^{-\frac n2},
			\qquad
			|\nabla \mathcal E_{\rm int}(z)|\le C|z|^{-\frac n2-1},
		\end{equation}
		and, on the flat boundary,
		\begin{equation}\label{eq:sec3_boundary_error_derivative_bounds}
			|\mathcal E_{\rm bdy}(z)|\le C|z|^{1-\frac n2},
			\qquad
			|\nabla' \mathcal E_{\rm bdy}(z)|\le C|z|^{-\frac n2}.
		\end{equation}
		Indeed,
		\[
		\mathcal E_{\rm int}=(g^{ij}-\delta^{ij})\partial_{ij}u
		+ (\partial_i g^{ij})\partial_ju+\text{lower-order terms},
		\]
		where \(g^{ij}-\delta^{ij}=O(|x|)\), \(\partial g=O(1)\), and \(R_g=O(|x|)\).
		On the boundary,
		\[
		\mathcal E_{\rm bdy}=-\mu^i\partial_i u-\frac{n-2}{2}h_gu,
		\]
		with \(\mu^i=O(|x|)\), \(\nabla'\mu^i=O(1)\), \(h_g=O(|x|)\), and \(\nabla'h_g=O(1)\).
		This proves \eqref{eq:sec3_error_derivative_bounds}--\eqref{eq:sec3_boundary_error_derivative_bounds}.

		\begin{lem}\label{lem:sec3_kelvin_error_estimates}
			Assume that, for a fixed \(\theta_*\in(0,1/4)\),
			\[
			\theta_*a\le\lambda<a,
			\qquad \lambda<r<Ka,
			\]
			and set \(d:=a-\lambda\). Define
			\[
			\mathcal R_{\rm int}(y)
			:=\mathcal E_{\rm int}(y)-\left(\frac{\lambda}{r}\right)^{n+2}\mathcal E_{\rm int}(y^\lambda),
			\]
			and, on \(\partial'\mathbb R^n_+\),
			\[
			\mathcal R_{\rm bdy}(y)
			:=\mathcal E_{\rm bdy}(y)-\left(\frac{\lambda}{r}\right)^n\mathcal E_{\rm bdy}(y^\lambda).
			\]
			Then, in the transition region \(\kappa d\le |y|\le c_0a\),
			\begin{equation*}
				|\mathcal R_{\rm int}(y)|
				\le C(r-\lambda)|y|^{-\frac n2-1},
				\qquad
				|\mathcal R_{\rm bdy}(y)|
				\le C(r-\lambda)|y|^{-\frac n2}.
			\end{equation*}
			In the regular region \(|y|\ge c_0a\),
			\begin{equation}\label{eq:sec3_regular_errors}
				|\mathcal R_{\rm int}(y)|\le Ca^{-\frac n2},
				\qquad
				|\mathcal R_{\rm bdy}(y)|\le Ca^{-\frac n2}(r-\lambda).
			\end{equation}
		\end{lem}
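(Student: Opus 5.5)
The plan is to write each remainder as a difference of the same quantity evaluated at \(y\) and at its reflection, and then use the derivative bounds \eqref{eq:sec3_error_derivative_bounds}--\eqref{eq:sec3_boundary_error_derivative_bounds} via the mean value theorem. The basic geometric input is that, with \(r=|y-x_0|\),
\[
|y-y^\lambda|=\frac{r^2-\lambda^2}{r}\le C(r-\lambda),
\qquad
1-\Bigl(\frac{\lambda}{r}\Bigr)^{m}\le C\,\frac{r-\lambda}{r},
\]
for \(r\le Ka\) and \(\lambda\ge\theta_*a\). We split
\[
\mathcal R_{\rm int}(y)=\bigl[\mathcal E_{\rm int}(y)-\mathcal E_{\rm int}(y^\lambda)\bigr]+\Bigl[1-\Bigl(\frac{\lambda}{r}\Bigr)^{n+2}\Bigr]\mathcal E_{\rm int}(y^\lambda),
\]
and split \(\mathcal R_{\rm bdy}\) in the same way with exponent \(n\). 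On \(\partial'\mathbb R^n_+\), with \(x_0\) also on the boundary, the reflected point \(y^\lambda\) stays on the flat boundary. Hence the tangential gradient bound \(|\nabla'\mathcal E_{\rm bdy}|\le C|z|^{-n/2}\) is the right one to use along the segment \([y,y^\lambda]\).

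\emph{Transition region \(\kappa d\le|y|\le c_0a\).} Here we first need \(|y^\lambda|\) and every point of the segment \([y,y^\lambda]\) to be comparable to \(|y|\). Since \(0\) lies outside the ball \(\{|z-x_0|\le\lambda\}\) at distance \(d=a-\lambda\), and \(y\) is within \(c_0a\) of the origin, \(r\) is close to \(a\). In that regime the inversion acts near \(0\) essentially like a reflection across the sphere, which is bi-Lipschitz with constants close to \(1\). Concretely, I would show \(|y^\lambda-0^\lambda|\) is comparable to \(|y|\), where \(0^\lambda\) is the reflection of the origin, and \(|0^\lambda|\approx d\le|y|/\kappa\). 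With \(\kappa\) large, this gives \(|y^\lambda|\sim|y|\), and by convexity the segment stays in an annulus \(\{|z|\sim|y|\}\). The mean value theorem then gives a bound \(C(r-\lambda)|y|^{-n/2-1}\) for the first bracket. For the second bracket, \((r-\lambda)/r\) times \(|y|^{-n/2}\) is at most \(C(r-\lambda)|y|^{-n/2-1}\), since \(r\sim a\ge|y|/c_0\). The boundary remainder is handled identically, losing one power of \(|y|\) less.

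\emph{Regular region \(|y|\ge c_0a\).} For the interior remainder we only claim \(Ca^{-n/2}\). This should follow from \(|\mathcal E_{\rm int}(y)|\le C|y|^{-n/2}\le Ca^{-n/2}\) together with a bound \((\lambda/r)^{n+2}|\mathcal E_{\rm int}(y^\lambda)|\le Ca^{-n/2}\). The latter needs \(|y^\lambda|\gtrsim a\), which is the delicate point: \(y^\lambda\) can approach the origin only when \(y\) approaches \(0^\lambda\), and \(|0^\lambda-x_0|=\lambda^2/a\). I would show that \(|y|\ge c_0a\) forces \(|y^\lambda|\ge c\,a\) by means of the identity
\[
|y^\lambda-0^\lambda|=\frac{\lambda^2|y|}{r\,a}.
\]
However, \(|0^\lambda|=a-\lambda^2/a\) may be small, so the honest bound is \(|y^\lambda|\gtrsim |y|\lambda^2/(ra)-(a-\lambda^2/a)\). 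For \(\lambda\) close to \(a\) this is \(\gtrsim c_0a\). For \(\lambda\) near \(\theta_* a\) the reflected point has size \(\sim a\) anyway, since \(0^\lambda\) is then far from the origin. A case split on \(d\lessgtr\tfrac{c_0}{4}a\) should handle both. For the boundary remainder, the same distance control yields the \((r-\lambda)\) factor via the mean value theorem, with gradient \(|\nabla'\mathcal E_{\rm bdy}|\le Ca^{-n/2}\) along the segment, plus \((1-(\lambda/r)^n)Ca^{1-n/2}\le C(r-\lambda)a^{-n/2}\).

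The main obstacle is the geometric claim that \([y,y^\lambda]\) avoids a neighborhood of the origin of size \(\sim|y|\) (respectively \(\sim a\)). This requires the explicit formula for \(y^\lambda-0^\lambda\) and a careful choice of \(\kappa\) large and \(c_0\) small depending on \(\theta_*\) and \(K\). Everything else is a routine application of the mean value theorem.
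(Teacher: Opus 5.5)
The gap is the step ``by convexity the segment stays in an annulus \(\{|z|\sim|y|\}\).'' Convexity of balls only gives the upper bound \(|z|\le\max(|y|,|y^\lambda|)\) on \([y,y^\lambda]\). The mean value theorem needs a \emph{lower} bound on \(|z|\), because \(|\nabla\mathcal E_{\rm int}(z)|\le C|z|^{-n/2-1}\) and \(|\nabla'\mathcal E_{\rm bdy}(z)|\le C|z|^{-n/2}\) degenerate at the singular point. That lower bound can fail even when both endpoints are comparable to \(|y|\) and \(d\ll|y|\). Take \(y=-t\,x_0/a\in\partial'\mathbb R^n_+\) with \(d\ll t\le c_0a\), so that \(r=a+t\). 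Then \(y^\lambda=\bigl(1-\frac{\lambda^2}{a(a+t)}\bigr)x_0\) is a positive multiple of \(x_0\) with \(|y^\lambda|=a-\frac{\lambda^2}{a+t}\sim t\), while \(y\) is a negative multiple of \(x_0\). Hence \(0\in[y^\lambda,y]\), and your bound on \(\mathcal R_{\rm bdy}(y)\) gives nothing. Lifting \(y\) slightly off \(\{y_n=0\}\) makes the segment pass arbitrarily close to \(0\), which breaks the interior estimate too. The same configuration with \(t\ge c_0a\) breaks your mean-value bound for \(\mathcal R_{\rm bdy}\) in the regular region. No control of the endpoints alone, such as your identity for \(y^\lambda-0^\lambda\), can repair this.

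The missing idea is a dichotomy, used in the paper, on \(\delta:=r-\lambda\) relative to \(|y|\).

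If \(\delta\ge c|y|\), do not use the mean value theorem at all. Bound the two terms separately, using only \(|y^\lambda|\gtrsim|y|\), to get \(|\mathcal R_{\rm int}(y)|\le C|y|^{-n/2}\le Cc^{-1}\delta|y|^{-n/2-1}\) and \(|\mathcal R_{\rm bdy}(y)|\le C|y|^{1-n/2}\le Cc^{-1}\delta|y|^{-n/2}\). The example above has \(\delta=t+d\ge|y|\) and falls in this case.

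If \(\delta<c|y|\), then \(|y-y^\lambda|=(r^2-\lambda^2)/r\le2\delta<2c|y|\). So every point of the segment has \(|z|\ge(1-2c)|y|\), and your mean-value argument is correct. The paper packages your two brackets into \(F(s)=s^{(n+2)/2}\mathcal E_{\rm int}(x_0+s\omega)\), \(\omega=(y-x_0)/r\), along the ray, which is equivalent.

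In the regular region the same split works, after noting that \(|y|\sim a\) when \(r\le2a\) and \(\delta\ge a\) when \(r\ge2a\).

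The dichotomy also removes your assumption that \(\kappa\) is large, which is not available. The paper fixes \(\kappa\ll1\), as needed for the cap comparison in the proof of Claim~\ref{claim:ms_input}. For \(|y|\ll d\), the two-sided relation \(|y^\lambda|\sim|y|\) is in fact false: \(y^\lambda\) is then within \(|y|\) of \(0^\lambda=(1-\lambda^2/a^2)x_0\), whose length is at least \(d\). Only the one-sided bound \(|y^\lambda|\gtrsim|y|\) is needed, and it holds throughout \(\Sigma_\lambda\). The paper reads off \(|y^\lambda|\ge(\lambda/r)|y|\) from the identity \(|y^\lambda|^2=(\lambda/r)^2|y|^2+(1-\lambda^2/r^2)(a^2-\lambda^2)\). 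Alternatively, you can combine your identity \(|y^\lambda-0^\lambda|=\lambda^2|y|/(ra)\) with \(|y^\lambda|\ge a-\lambda^2/r>d\). Since \(\delta\ge d-|y|\), the regime \(|y|\ll d\) automatically falls in the crude case.
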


		\begin{proof}
			See Appendix~\ref{app:sec3_moving_sphere_tools}.
		\end{proof}

		\begin{lem}\label{lem:sec3_boundary_linear_coeff}
			On \(\partial'\mathbb R^n_+\), define the mean-value coefficient
			\[
			c_\lambda(y):=
			\begin{cases}
				\dfrac{u(y)^p-u_{x_0,\lambda}(y)^p}{u(y)-u_{x_0,\lambda}(y)},
				&u(y)\ne u_{x_0,\lambda}(y),\\[2.5mm]
				p u(y)^{p-1},
				&u(y)=u_{x_0,\lambda}(y).
			\end{cases}
			\]
			Then
			\begin{equation}\label{eq:sec3_boundary_coefficient_bound}
				0\le c_\lambda(y)\le C|y|^{-1}
				\qquad\text{on }\Sigma_\lambda\cap\partial'\mathbb R^n_+.
			\end{equation}
			The same bound applies to the coefficient obtained by the mean-value theorem on
			any negative-part support in the narrow-domain argument below.
		\end{lem}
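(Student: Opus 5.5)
The coefficient $c_\lambda$ is a difference quotient of the increasing convex function $t\mapsto t^p$, so both bounds should follow from the mean value theorem once $u$ and $u_{x_0,\lambda}$ are each bounded by $C|y|^{-\alpha_0}$ on the flat part of $\Sigma_\lambda$. Concretely, at every boundary point $y$ there is $\xi(y)$ between $u(y)$ and $u_{x_0,\lambda}(y)$ with $c_\lambda(y)=p\,\xi(y)^{p-1}$; this also covers the case $u=u_{x_0,\lambda}$, where $\xi=u$. Since $u>0$ and $u_{x_0,\lambda}>0$ away from the singular set, we get $\xi>0$ and hence $c_\lambda\ge0$. Moreover $\xi\le\max\{u,u_{x_0,\lambda}\}$. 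So it suffices to prove
\[
u(y)+u_{x_0,\lambda}(y)\le C|y|^{-\alpha_0},
\]
because then $c_\lambda\le pC^{p-1}|y|^{-\alpha_0(p-1)}=C'|y|^{-1}$, using $\alpha_0(p-1)=\frac{n-2}{2}\cdot\frac{2}{n-2}=1$. The bound for $u$ itself is the upper bound in \eqref{eq:two_sided_bound}.

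For the reflected function, note that $r>\lambda$ on $\Sigma_\lambda$, so $(\lambda/r)^{n-2}\le1$ and hence $u_{x_0,\lambda}(y)\le u(y^\lambda)\le C|y^\lambda|^{-\alpha_0}$. The key geometric step is therefore the lower bound $|y^\lambda|\ge c|y|$, with $c$ depending only on $\theta_*$ and $K$. The difficulty is that $0$ lies outside $\overline{\mathcal B_\lambda(x_0)}$ only at distance $d=a-\lambda$, which may be tiny, while $y^\lambda$ lies inside this ball, so $y^\lambda$ may approach the singularity.

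To handle this, let $0^\lambda=x_0-\lambda^2x_0/a^2$ be the point whose reflection is $0$. It lies inside the ball, with $|0^\lambda|=(a^2-\lambda^2)/a\le 2d$. The standard inversion distance identity $|y^\lambda-z^\lambda|=\lambda^2|y-z|/(|y-x_0|\,|z-x_0|)$, applied with $z=0^\lambda$ and $|0^\lambda-x_0|=\lambda^2/a$, gives
\[
|y^\lambda|=\frac{a\,|y-0^\lambda|}{r}.
\]
We then split into two cases.
\begin{itemize}
\item If $|y|\ge4d$, then $|y-0^\lambda|\ge|y|-2d\ge|y|/2$. Since $r<Ka$, this gives $|y^\lambda|\ge|y|/(2K)$.
\item If $|y|<4d$, we use that $y$ lies outside $\mathcal B_\lambda(x_0)$ while $0^\lambda$ lies inside it. Hence $|y-0^\lambda|\ge\lambda-\lambda^2/a=\lambda d/a$, so $|y^\lambda|\ge\lambda d/r\ge(\theta_*/K)\,d\ge c|y|$.
\end{itemize}
In both cases $u_{x_0,\lambda}(y)\le C|y|^{-\alpha_0}$, which proves \eqref{eq:sec3_boundary_coefficient_bound}.

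For the final assertion, on the support of the negative part in the narrow-domain argument the mean-value coefficient is again $p\,\xi^{p-1}$ with $\xi$ between the same two functions, so the same bound applies verbatim. The only real obstacle is the near-singular case $|y|<4d$, where we cannot simply compare $|y^\lambda|$ with $|y|$ through $|y^\lambda-x_0|$. The reflected singular point $0^\lambda$ is what makes the bound uniform in $d$.
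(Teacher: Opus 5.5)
Your proof is correct and follows the paper's outline: the mean value theorem, then $u(y)^{p-1}\le C|y|^{-1}$ from the two-sided estimate, then a lower bound on $|y^\lambda|$ to control the Kelvin term. The real difference is in how you obtain that geometric lower bound.

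The paper writes $y^\lambda=\frac{\lambda^2}{r^2}y+\bigl(1-\frac{\lambda^2}{r^2}\bigr)x_0$ and expands this into the exact identity
\[
|y^\lambda|^2=\Bigl(\frac{\lambda}{r}\Bigr)^2|y|^2+\Bigl(1-\frac{\lambda^2}{r^2}\Bigr)(a^2-\lambda^2).
\]
The second term is nonnegative because $\lambda<r$ and $\lambda<a$, so $|y^\lambda|\ge(\lambda/r)|y|$ in one line. The paper also keeps the factor in $u_{x_0,\lambda}(y)^{p-1}=(\lambda/r)^2u(y^\lambda)^{p-1}$, whereas you discard the corresponding factor $(\lambda/r)^{n-2}$. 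This yields $u_{x_0,\lambda}(y)^{p-1}\le C(\lambda/r)|y|^{-1}\le C|y|^{-1}$ with no case split and a constant independent of $\theta_*$ and $K$.

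Your formula $|y^\lambda|=a|y-0^\lambda|/r$ describes the same quantity; one checks $a^2|y-0^\lambda|^2=\lambda^2|y|^2+(r^2-\lambda^2)(a^2-\lambda^2)$. Extracting a lower bound from it by the triangle inequality, however, forces your split at $|y|=4d$ and uses $\lambda\ge\theta_*a$ and $r<Ka$. Your constant therefore depends on $\theta_*$ and $K$, which is harmless since both are fixed.

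In exchange, your version makes explicit that the only possible degeneration as $d\to0$ comes from the reflected singular point $0^\lambda$. That point sits inside $\mathcal B_\lambda(x_0)$ at distance at least $\lambda d/a$ from $\Sigma_\lambda$. The paper's identity shows that this near-singular case is in fact not an obstacle at all.
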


		\begin{proof}
			Nonnegativity follows from the monotonicity of \(t\mapsto t^p\). The mean-value theorem gives
			\[
			c_\lambda(y)\le C\bigl(u(y)^{p-1}+u_{x_0,\lambda}(y)^{p-1}\bigr).
			\]
			Since \(p-1=2/(n-2)\), the two-sided estimate gives
			\[
			u(y)^{p-1}\le C|y|^{-1}.
			\]
			For the Kelvin term,
			\[
			u_{x_0,\lambda}(y)^{p-1}
			=\left(\frac{\lambda}{r}\right)^2u(y^\lambda)^{p-1}
			\le C\left(\frac{\lambda}{r}\right)^2|y^\lambda|^{-1}.
			\]
			Using the Kelvin lower geometry in Appendix~\ref{app:sec3_moving_sphere_tools},
			\[
			|y^\lambda|\ge C^{-1}\frac{\lambda}{r}|y|,
			\]
			and therefore
			\[
			u_{x_0,\lambda}(y)^{p-1}\le C|y|^{-1}.
			\]
			This proves \eqref{eq:sec3_boundary_coefficient_bound}.
		\end{proof}

		\begin{lem}\label{lem:dyadic_transition_barrier}
			Assume \(\theta_*a\le\lambda<a\), set \(d:=a-
			\lambda\), and fix
			\(0<\kappa,c_0\ll1\). In the transition region
			\[
			\mathcal T_\lambda:=\Sigma_\lambda\cap\{\kappa d\le |y|\le2c_0a\},
			\]
			the transition barrier \(\Psi_\lambda\) appearing in Claim~\ref{claim:ms_input}
			is constructed as a nonnegative function on \(\mathcal T_\lambda\) such that
			\(\Psi_\lambda=0\) on \(r=\lambda\). It is extended by zero outside
			\(\mathcal T_\lambda\) and the differential inequalities below are used only in the
			region where the barrier is constructed. In the core
			\(\mathcal T_\lambda^{\rm core}:=\Sigma_\lambda\cap\{\kappa d\le |y|\le c_0a\}\),
			\begin{equation*}
				-\Delta\Psi_\lambda\ge |\mathcal R_{\rm int}|,
			\end{equation*}
			while on the flat boundary of the core,
			\begin{equation*}
				-\partial_n\Psi_\lambda-c_\lambda\Psi_\lambda
				\ge |\mathcal R_{\rm bdy}|.
			\end{equation*}
			Moreover,
			\begin{equation*}
				0\le\Psi_\lambda(y)
				\le C(r-\lambda)|y|^{1-\frac n2}
				\qquad\text{in }\mathcal T_\lambda,
			\end{equation*}
			and, in the cutoff shell \(c_0a\le |y|\le2c_0a\), the rough localization
			estimates satisfy
			\[
			|\Delta\Psi_\lambda|\le Ca^{-\frac n2},
			\]
			and, on the flat boundary of this shell,
			\[
			\bigl|-\partial_n\Psi_\lambda-c_\lambda\Psi_\lambda\bigr|
			\le C(r-\lambda)a^{-\frac n2}.
			\]
		\end{lem}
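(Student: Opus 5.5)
I will build \(\Psi_\lambda\) dyadically in \(|y|\), using radial kernels centred at \(x_0\) that vanish on \(r=\lambda\), in the spirit of \(F_n,G_n\) in Step~3 of Section~\ref{sec:upper_bound}. The guiding observation is that Lemma~\ref{lem:sec3_kelvin_error_estimates} bounds the errors in the transition region by \((r-\lambda)\) times a power of \(|y|\). So \(\Psi_\lambda\) should look like \((r-\lambda)\) times a weight in \(|y|\), plus a normal correction \(y_n\) times a similar profile that supplies the Robin inequality. Concretely, I try
\[
\Psi_\lambda(y)=\chi\!\left(\frac{|y|}{c_0a}\right)\Bigl[A_1(r-\lambda)\,\phi(y)\;-\;A_2\,y_n\,(r-\lambda)\,\psi(y)\Bigr],
\]
with \(\phi(y)\sim|y|^{1-\frac n2}\) and \(\psi(y)\sim|y|^{-\frac n2}\). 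The function \(\chi\) is a smooth cutoff equal to \(1\) on \([0,1]\) and \(0\) on \([2,\infty)\). Since \(y_n\le|y|\), the second term is dominated by the first once \(A_1\gg A_2\). This gives \(\Psi_\lambda\ge0\), \(\Psi_\lambda=0\) on \(r=\lambda\), and the size bound \(\Psi_\lambda\le C(r-\lambda)|y|^{1-n/2}\).

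\textbf{Choice of weight and interior inequality.} In the core (\(\chi\equiv1\)), I choose \(\phi\) to be a superharmonic weight of the form \(|y|^{-s}\) with \(s\) slightly different from \(\frac n2-1\). Then \(-\Delta\bigl(|y|^{-s}\bigr)=s(n-2-s)|y|^{-s-2}>0\) for \(0<s<n-2\). Taking \(s=\frac n2-1\), this requires \(n>2\), so it is fine for \(n=3,4\). Expanding,
\[
-\Delta\bigl((r-\lambda)\phi\bigr)=(r-\lambda)(-\Delta\phi)-\frac{n-1}{r}\phi-2\nabla r\cdot\nabla\phi .
\]
The first term is \(\gtrsim(r-\lambda)|y|^{-\frac n2-1}\), which dominates \(|\mathcal R_{\rm int}|\) after choosing \(A_1\) large.

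\textbf{Cross terms, the main obstacle.} The cross terms \(r^{-1}\phi\) and \(\nabla r\cdot\nabla\phi\) carry no factor \((r-\lambda)\). Since \(r\sim a\gg|y|\) in the core, \(|\nabla r\cdot\nabla\phi|\lesssim|y|^{-\frac n2}\) and \(r^{-1}\phi\lesssim a^{-1}|y|^{1-\frac n2}\). Near \(r=\lambda\) these are not controlled by \((r-\lambda)|y|^{-\frac n2-1}\) unless \(r-\lambda\gtrsim|y|\). I expect this to be the main difficulty. My remedy is to note that near \(0\), \(r-\lambda\) is comparable to \(d+\langle y,\cdot\rangle\)-type quantities. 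I will therefore replace \((r-\lambda)\) by a regularized harmonic-type distance: the function \(1-(\lambda/r)^{n-2}\), which is harmonic, vanishes on \(r=\lambda\), and is \(\asymp(r-\lambda)/a\). I will then use a dyadic decomposition \(|y|\sim2^j\kappa d\). On each shell I place a multiple of the Green-type potential generated by the source \(2^{-j(\frac n2+1)}\) times the positive part of \(1-(\lambda/r)^{n-2}\), and I sum the geometric series. The cross-term errors on each shell are then absorbed by the superharmonicity of the next shell.

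\textbf{Boundary inequality and cutoff shell.} On \(\{y_n=0\}\), the \(y_n\) term contributes \(-\partial_n(-A_2y_n(r-\lambda)\psi)=A_2(r-\lambda)\psi\gtrsim A_2(r-\lambda)|y|^{-\frac n2}\), using the orientation convention \(\partial_\nu=-\partial_n\). The first term contributes \(-\partial_n\) of a function that is even in \(y_n\) to leading order, hence something small, because \(x_0\in\partial'\mathbb R^n_+\) and \(r\) is symmetric in \(y_n\). By Lemma~\ref{lem:sec3_boundary_linear_coeff}, \(c_\lambda\Psi_\lambda\le C|y|^{-1}A_1(r-\lambda)|y|^{1-\frac n2}=CA_1(r-\lambda)|y|^{-\frac n2}\). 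So the constants are ordered as \(A_2\gg A_1\gg1\), while keeping \(\Psi_\lambda\ge0\) via \(y_n\le|y|\) and smallness of \(A_2/A_1\) in the region where \(y_n\) is comparable to \(|y|\). To reconcile the two orderings, I multiply \(y_n\) by \(\eta(y_n/|y|)\), with \(\eta\) cut off at a small angle \(\epsilon\). Then \(y_n\psi\le\epsilon|y|\psi\), and I choose \(\epsilon\lesssim A_1/A_2\); the interior error from \(\eta\) is absorbed by \(A_1\). Finally, in the shell \(c_0a\le|y|\le2c_0a\), the derivatives of \(\chi\) give terms of size \(a^{-1}\) times the bracket, which are \(\lesssim a^{-\frac n2}\) and \(\lesssim(r-\lambda)a^{-\frac n2}\) respectively. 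This yields the rough localization estimates directly.
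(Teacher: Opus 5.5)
There is a genuine gap. You correctly locate the obstacle, and you reach for the same harmonic substitute $\sigma_\lambda\propto 1-(\lambda/r)^{n-2}$ that the paper uses, but your remedy does not remove the obstacle. Harmonicity of $\sigma_\lambda$ kills only the term $\phi\,\Delta r$. In
\[
-\Delta(\sigma_\lambda\phi)=\sigma_\lambda(-\Delta\phi)-2\sigma_\lambda'(r)\,\phi'(|y|)\,\nabla r\cdot\nabla|y|
\]
the cross term is of size $|y|^{-n/2}$ and does not vanish on the face $r=\lambda$. On the face, $\nabla r\cdot\nabla|y|=(|y|^2-(a^2-\lambda^2))/(2\lambda|y|)$. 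This is negative at face points with $|y|^2<a^2-\lambda^2$, and such points lie in the core once $d<c_0a$. So $-\Delta(\sigma_\lambda\phi)<0$ there, whereas $-\Delta\Psi_\lambda\ge|\mathcal R_{\rm int}|\ge0$ is required up to the face.

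Cutting shell pieces $B_j$ off in $|y|$ produces the same kind of term. On the face, $-\Delta(\chi_jB_j)$ contains $-2\chi_j'(|y|)\,\partial_rB_j\,\nabla r\cdot\nabla|y|$, which is unsigned and has no factor $\sigma_\lambda$. The superharmonicity of the next shell is $O(\sigma_\lambda|y|^{-n/2-1})$, whether it comes from a Green potential of a source proportional to $\sigma_\lambda$ or from $\sigma_\lambda$ times a superharmonic weight. It vanishes on $r=\lambda$. So ``absorbed by the superharmonicity of the next shell'' fails throughout the collar $\{r-\lambda\ll|y|\}$. If you meant untruncated potentials, these cross terms disappear. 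But then you leave open the conditions on $\{y_n=0\}$ and on $|y|=\kappa d,\,2c_0a$, the size bound, and the Robin inequality.

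The missing idea is the paper's split by the ratio $\delta/|y|$, with $\delta=r-\lambda$. In the bulk $\delta\ge\eta|y|$, one has $\sigma_\lambda\sim|y|$, so commutators of size $|y|^{-n/2}$ are $\lesssim\sigma_\lambda|y|^{-n/2-1}$. In the collar $\delta\le2\eta|y|$, the dyadic cutoffs are functions of $|p_\lambda(\omega)|$, where $p_\lambda(\omega)=x_0+\lambda\omega$. These are constant along rays from $x_0$, so $\nabla\chi_j^A\cdot\nabla\sigma_\lambda=0$, and $\partial_n|p_\lambda|=0$ on $\{y_n=0\}$. On each patch the barrier is $\sigma_\lambda$ times scale-$\rho$ profiles, never $\sigma_\lambda$ times a weight in $|y|$.

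Your boundary correction fails independently, because its constants cannot be ordered. Via $c_\lambda\le C|y|^{-1}$ (Lemma~\ref{lem:sec3_boundary_linear_coeff}, $C$ not small), the Robin inequality requires $A_2\ge CA_1$. Positivity requires $A_2\epsilon\lesssim A_1$. Set $\tau=y_n/|y|$. To leading order, the term $-A_2y_n\eta(\tau)(r-\lambda)\psi$ contributes $A_2(r-\lambda)|y|^{-n/2-1}(\tau\eta)''(\tau)$ to $-\Delta\Psi_\lambda$. Since $(\tau\eta)'$ drops from $1$ to $0$ on $[\epsilon/2,\epsilon]$, there are points where $(\tau\eta)''\le-2/\epsilon$. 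There you get a negative term of order $A_2\epsilon^{-1}\gtrsim A_2^2/A_1\gg A_1$ times $(r-\lambda)|y|^{-n/2-1}$. The margin $(\frac n2-1)^2A_1(r-\lambda)|y|^{-n/2-1}$ cannot absorb it. This term also creates cross terms $A_2\nabla r\cdot\nabla(y_n\eta\psi)=O(A_2\epsilon|y|^{-n/2})$ with no factor $r-\lambda$.

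The paper avoids this conflict by putting the normal dependence into a concave decreasing profile that multiplies the harmonic $\sigma_\lambda$. Up to scaling, the patch barrier is $\rho^{1-n/2}\sigma_\lambda\,q(y_n/\rho)$ with $q(t)=1-\frac t{2\eta_*}-\frac{t^2}{4\eta_*^2}$ in a strip. Then:
\begin{itemize}
\item $-q'(0)=\frac1{2\eta_*}$ beats $\rho c_\lambda\le C$, giving the Robin margin;
\item $-q''$ gives the interior margin $\frac1{2\eta_*^2}\sigma_\lambda\rho^{-n/2-1}$;
\item the cross term $-2\rho^{-n/2}q'\,\partial_n\sigma_\lambda$ is nonnegative, because $\partial_n\sigma_\lambda=\sigma_\lambda'(r)\,y_n/r\ge0$.
\end{itemize}
So the Robin and interior margins come from the same factor and do not compete. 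Your cutoff-shell estimates are fine, but they repair neither failure.
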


		\begin{proof}
			See Appendix~\ref{app:sec3_moving_sphere_tools}.
		\end{proof}

		\begin{lem}\label{lem:sec3_combined_barrier}
			The regular barrier \(H_\lambda\) appearing in Claim~\ref{claim:ms_input} is
			chosen as
			\[
			H_\lambda:=h_1+h_2+h_3,
			\]
			where
			\[
			h_1:=-A_1a^{\alpha_0}\int_\lambda^r s^{1-n}(s-\lambda)\,ds,
			\]
			\[
			h_2:=-A_2a^{-\frac n2}y_n(r-\lambda),
			\qquad
			h_3:=-A_3a^{-\frac n2}(r^2-\lambda^2).
			\]
			After choosing \(A_1,A_2,A_3\) sufficiently large, the corrected barrier satisfies
			\begin{equation*}
				-\Delta(H_\lambda+\Psi_\lambda)
				\ge |\mathcal R_{\rm int}|
			\end{equation*}
			in \(\Sigma_\lambda\cap\{|y|\ge\kappa d\}\), and
			\begin{equation*}
				-\partial_n(H_\lambda+\Psi_\lambda)
				-c_\lambda(H_\lambda+\Psi_\lambda)
				\ge |\mathcal R_{\rm bdy}|
			\end{equation*}
			on its flat boundary. Moreover \(H_\lambda\le0\) and \(H_\lambda=0\) on \(r=\lambda\).
		\end{lem}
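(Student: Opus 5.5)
The plan is to verify the two differential inequalities term by term, splitting \(\Sigma_\lambda\cap\{|y|\ge\kappa d\}\) into the core \(\mathcal T_\lambda^{\rm core}\), the cutoff shell \(c_0a\le|y|\le2c_0a\), and the regular region \(|y|\ge 2c_0a\). Throughout we use \(r\sim a\) on \(\Sigma_\lambda\) (since \(\theta_*a\le\lambda<r<Ka\)) and \(r-\lambda\le Ka\). First we record the basic computations.

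\textbf{Interior Laplacians.} Since \(h_1\) is \(-A_1a^{\alpha_0}F_n(r,\lambda)\) (radial in \(r=|y-x_0|\)), we get \(-\Delta h_1=A_1a^{\alpha_0}r^{1-n}\sim A_1a^{\alpha_0+1-n}=A_1a^{-n/2}\). Next, \(-\Delta h_3=2nA_3a^{-n/2}\). For \(h_2\), note that \(y_n\) is harmonic, so
\[
-\Delta h_2=A_2a^{-\frac n2}\bigl(y_n\Delta r+2\partial_n r\bigr)=A_2a^{-\frac n2}\Bigl(\tfrac{(n-1)y_n}{r}+\tfrac{2y_n}{r}\Bigr)\ge0,
\]
which is bounded by \(CA_2a^{-n/2}\).

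\textbf{Flat boundary.} On \(y_n=0\) we have \(\partial_n r=0\), so \(\partial_nh_1=\partial_nh_3=0\), and \(-\partial_nh_2=A_2a^{-n/2}(r-\lambda)\). Moreover \(h_1,h_3\le0\) and \(h_2=0\) there, so \(-c_\lambda H_\lambda\ge0\) by Lemma~\ref{lem:sec3_boundary_linear_coeff}. Hence on the flat boundary
\[
-\partial_nH_\lambda-c_\lambda H_\lambda\ge A_2a^{-\frac n2}(r-\lambda).
\]
The sign properties are immediate: \(H_\lambda\le0\) because each \(h_i\le0\) (using \(r\ge\lambda\)), and \(H_\lambda=0\) on \(r=\lambda\).

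\textbf{Region by region.} In the core, Lemma~\ref{lem:dyadic_transition_barrier} already gives \(-\Delta\Psi_\lambda\ge|\mathcal R_{\rm int}|\) and the boundary inequality for \(\Psi_\lambda\). Adding the nonnegative contributions \(-\Delta H_\lambda\ge0\) and \(-\partial_nH_\lambda-c_\lambda H_\lambda\ge0\) preserves both inequalities.

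In the regular region \(\Psi_\lambda=0\). By \eqref{eq:sec3_regular_errors}, \(|\mathcal R_{\rm int}|\le Ca^{-n/2}\) and \(|\mathcal R_{\rm bdy}|\le Ca^{-n/2}(r-\lambda)\). These are dominated once \(A_1c_n+2nA_3\ge C\) and \(A_2\ge C\).

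In the cutoff shell the errors of \(\Psi_\lambda\) are at most \(Ca^{-n/2}\) in the interior and \(C(r-\lambda)a^{-n/2}\) on the boundary. The bounds \eqref{eq:sec3_regular_errors} (with \(|y|\ge c_0a\)) have the same form, so the same choice of constants, enlarged by a factor depending on \(c_0,\kappa\), absorbs both. This fixes \(A_1,A_2,A_3\) depending only on \(n,K,\theta_*,c_0,\kappa\) and the constants of the preceding lemmas.

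\textbf{Main obstacle.} The delicate point is the shell and regular region near the points where \(r\) is close to \(\lambda\) but \(|y|\) is not small. The interior error \(Ca^{-n/2}\) does not vanish on \(r=\lambda\), and we must rely on the strictly positive radial Laplacians of \(h_1\) and \(h_3\) rather than on the size of \(H_\lambda\); this works because only differential inequalities are needed. On the boundary, the factor \(r-\lambda\) in \(|\mathcal R_{\rm bdy}|\) is matched exactly by \(-\partial_nh_2\), which is why \(h_2\) carries the factor \((r-\lambda)\). We would also check that the \(-c_\lambda\Psi_\lambda\) term in the shell is already included in Lemma~\ref{lem:dyadic_transition_barrier}'s shell bound, so no extra lower bound on \(c_\lambda\) is required.
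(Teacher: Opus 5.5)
Your proposal is correct and follows essentially the same route as the paper. You compute $-\Delta h_i$ and $-\partial_n h_i$, use $H_\lambda\le 0$ and $c_\lambda\ge 0$ to make $-c_\lambda H_\lambda$ favorable, and treat the three regions as the paper does: the core via Lemma~\ref{lem:dyadic_transition_barrier}, the regular region via \eqref{eq:sec3_regular_errors} with $h_1,h_3$ in the interior and $h_2$ on the boundary, and the cutoff shell by absorbing the localization errors of $\Psi_\lambda$ through larger $A_3$ and $A_2$; splitting at $2c_0a$ rather than $c_0a$ is only cosmetic.
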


		\begin{proof}
			A direct computation gives
			\[
			-\Delta h_1=A_1a^{\alpha_0}r^{1-n},
			\qquad
			-\Delta h_2=(n+1)A_2a^{-\frac n2}\frac{y_n}{r}\ge0,
			\qquad
			-\Delta h_3=2nA_3a^{-\frac n2}.
			\]
			On \(\partial'\mathbb R^n_+\),
			\[
			-\partial_nh_1=0,
			\qquad
			-\partial_nh_2=A_2a^{-\frac n2}(r-\lambda),
			\qquad
			-\partial_nh_3=0.
			\]
			Since \(H_\lambda\le0\) and \(c_\lambda\ge0\), the Robin lower-order term
			\(-c_\lambda H_\lambda\) is favorable.

			In the regular region \(|y|\ge c_0a\), Lemma~\ref{lem:sec3_kelvin_error_estimates} gives \eqref{eq:sec3_regular_errors}. The
			interior terms above dominate \(Ca^{-n/2}\), and the boundary term
			\(A_2a^{-n/2}(r-\lambda)\) dominates the boundary error. In the transition core,
			Lemma~\ref{lem:dyadic_transition_barrier} gives the domination directly. In the
			cutoff shell \(c_0a\le|y|\le2c_0a\), the commutators generated by localizing
			\(\Psi_\lambda\) are bounded by \(Ca^{-n/2}\) in the interior and by
			\(C(r-\lambda)a^{-n/2}\) on the flat boundary; these are absorbed by increasing
			\(A_3\) and \(A_2\), respectively. This proves the lemma.
		\end{proof}

		\begin{proof}[Proof of Claim~\ref{claim:ms_input}]
			Choose \(K>1\) sufficiently large. On the outer boundary \(r=Ka\), the two-sided estimate gives
			\[
			u(y)\ge cK^{-\alpha_0}a^{-\alpha_0},
			\]
			while the Kelvin transform satisfies
			\[
			u_{x_0,\lambda}(y)\le CK^{2-n}a^{-\alpha_0}.
			\] Since \(2-n<-\alpha_0\), choosing \(K\) large enough yields
			\[
			u_{x_0,\lambda}(y)<u(y)
			\quad\text{on } r=Ka.
			\]
			The barrier terms are of smaller order on this boundary, so the comparison remains strict.

			We first start the moving spheres at a small proportional radius. Choose \(M>1\) sufficiently large, and then choose \(\theta_*>0\) sufficiently small compared with \(M^{-1}\). With this choice, the reflected term is controlled by \(u\) away from a narrow shell near \(r=\theta_*a\).
			More precisely, for
			\(\lambda_0:=\theta_*a\), the cap \(|y|\le\kappa a\) is controlled by the singular
			lower bound for \(u\), and the region \(|y|\ge\kappa a\), separated from the moving
			sphere, is controlled by the small factor \((\lambda_0/r)^{n-2}\).
			The remaining
			collar has width \(\eta a\) with \(\eta\) arbitrarily small.

			We apply the narrow-domain argument from Section~\ref{sec:upper_bound}, together with the boundary coefficient bound \eqref{eq:sec3_boundary_coefficient_bound}, to
			\[
			W_{\lambda_0}:=u-u_{x_0,\lambda_0}+H_{\lambda_0}+\Psi_{\lambda_0},
			\]
			this gives \(W_{\lambda_0}\ge0\) in
			\(\Sigma_{\lambda_0}\cap\{|y|\ge\kappa(a-\lambda_0)\}\). In the remaining cap,
			the direct comparison \(u\ge4u_{x_0,\lambda_0}\) and together with the fact that the barrier terms are lower order, gives \(W_{\lambda_0}\ge0\) on the cap boundary, and also imply the
			weaker pointwise estimate \eqref{eq:sec3_ms_input_used} in the cap.

			Define the admissible set
			\[
			\Lambda:=\{\mu\in[\lambda_0,a):
			W_\lambda\ge0\text{ in }
			\Sigma_\lambda\cap\{|y|\ge\kappa(a-\lambda)\}
			\text{ for every }\lambda\in[\lambda_0,\mu]\}.
			\]
			This set is nonempty. Let \(\bar\lambda:=\sup\Lambda\). It remains to show that
			\(a-\bar\lambda\le La^{1+\gamma}\). Suppose, to the contrary, that
			\[
			d_{\bar\lambda}:=a-\bar\lambda>La^{1+\gamma}.
			\]
			Choose \(\lambda\in\Lambda\) with \(\lambda<\bar\lambda\) sufficiently close to
			\(\bar\lambda\). Set \(d_\lambda:=a-\lambda\). Since
			\(d_{\bar\lambda}>La^{1+\gamma}\), for such \(\lambda\) one has
			\(d_\lambda\sim d_{\bar\lambda}\). Fix \(0<\theta\ll1\), choose \(0<\ell\le \theta d_\lambda\), and take \(\mu\) such that
			\[
			\bar\lambda<\mu<\lambda+\frac{\ell}{4}.
			\]
			This is possible as \(\lambda\) is chosen sufficiently close to
			\(\bar\lambda\). It is enough to prove that \(W_\nu\ge0\) for every
			\(\nu\in[\lambda,\mu]\), contradicting the definition of \(\bar\lambda\).

			In the newly exposed cap \(|y|\le2\kappa d_\lambda\), the inversion point
			\(I_\nu(y)=x_0+\nu^2(y-x_0)/|y-x_0|^2\) satisfies \(|I_\nu(y)|\sim d_\nu\) once
			\(\kappa\) is small. Hence
			\[
			u_{x_0,\nu}(y)\le Cd_\nu^{-\alpha_0},
			\qquad
			u(y)\ge c(\kappa d_\lambda)^{-\alpha_0}.
			\]
			After choosing \(\kappa\) sufficiently small, these estimates give the direct comparison \(u\ge4u_{x_0,\nu}\). The
			explicit barrier sizes give
			\[
			|H_\nu(y)|+\Psi_\nu(y)=o(d_\nu^{-\alpha_0})
			\]
			uniformly for \(a<a_0\), so \(W_\nu>0\) in the cap.

			In the middle region
			\[
			\mathcal M_{\lambda,\ell}:=
			\{y\in\Sigma_\lambda:
			r\ge\lambda+\ell/2,
			\ |y|\ge2\kappa d_\lambda,
			\ r<Ka\},
			\]
			\(W_\lambda\) is strictly positive by the strong maximum principle and the Hopf
			boundary lemma. The boundary zero-order term is harmless as \(c_\lambda\ge0\).
			The boundary zero of \(W_\lambda\) would contradict the Hopf lemma applied to
			\(-\partial_nW_\lambda-c_\lambda W_\lambda\ge0\). Since this region is separated from the moving sphere and from
			the cap, \(W_\mu\) depends uniformly continuously on \(\mu\). After choosing
			\(\mu-\lambda\) sufficiently small, the same positivity holds for all
			\(\nu\in[\lambda,\mu]\).

			It remains to treat the narrow shell
			\[
			\mathcal N_\nu:=
			\{y\in\Sigma_\nu:
			\nu<r<\lambda+\ell/2,
			\ |y|\ge\kappa d_\nu
			\}.
			\]
			Nonnegativity on the non-flat boundary is verified as follows. On \(r=\nu\), all
			comparison terms vanish and \(W_\nu=0\). On \(|y|=\kappa d_\nu\), the cap
			comparison applies. On \(r=\lambda+\ell/2\), either \(|y|\le2\kappa d_\lambda\),
			in which case the cap comparison applies, or \(|y|\ge2\kappa d_\lambda\), in
			which case the middle-region comparison applies. In \(\mathcal N_\nu\), Lemma~\ref{lem:sec3_combined_barrier} gives
			\[
			-\Delta W_\nu\ge0,
			\qquad
			-\partial_nW_\nu-c_\nu W_\nu\ge0.
			\]
			Since \(|y|\ge\kappa d_\nu\), Lemma~\ref{lem:sec3_boundary_linear_coeff} gives
			\(c_\nu\le C(\kappa d_\nu)^{-1}\). The shell has width at most \(\ell\), so
			the narrow domain technique can be applied again if
			\(C\ell/(\kappa d_\nu)<1/2\). Since \(d_\nu\sim d_\lambda\) for
			\(\nu\in[\lambda,\mu]\), this is ensured by choosing \(\theta\) sufficiently
			small. Thus \(W_\nu\ge0\) in the narrow shell.

			Combining the cap, middle-region and narrow-shell estimates gives
			\(W_\nu\ge0\) in the comparison domain for every \(\nu\in[\lambda,\mu]\). Since
			\(\mu>\bar\lambda\), this contradicts \(\bar\lambda=\sup\Lambda\). Therefore
			\[
			\bar\lambda\ge a-La^{1+\gamma}.
			\]
			Increasing the constant in the claim if necessary, we may assume \(C_0\ge L\).
			Now fix any \(\lambda\in[\lambda_0,a-C_0a^{1+\gamma}]\). Since
			\(\lambda\le\bar\lambda\), the definition of \(\Lambda\) gives
			\(W_\lambda\ge0\) in
			\(\Sigma_\lambda\cap\{|y|\ge\kappa(a-\lambda)\}\). Hence
			\[
			u_{x_0,\lambda}\le u+H_\lambda+\Psi_\lambda
			\le u+|H_\lambda|+\Psi_\lambda
			\]
			there. In the remaining cap \(|y|<\kappa(a-\lambda)\), the same direct cap comparison as above gives \(u\ge4u_{x_0,\lambda}\) after reducing
			\(\kappa\) and then taking \(a_0\) small. Thus
			\(u_{x_0,\lambda}\le u\), and the weaker estimate
			\eqref{eq:sec3_ms_input_used} follows in the cap as well. The sign and vanishing
			properties of \(H_\lambda\) and \(\Psi_\lambda\) follow from their definitions in
			Lemmas~\ref{lem:dyadic_transition_barrier} and~\ref{lem:sec3_combined_barrier}.
			It remains to verify the size bound stated in the claim. Let
			\(r_y=|y-x_0|\). Since \(\theta_*a\le\lambda<a\) and \(r_y<Ka\), the explicit
			formulas for \(H_\lambda=h_1+h_2+h_3\) give
			\[
			|h_1(y)|+|h_2(y)|+|h_3(y)|
			\le C(r_y-\lambda)|y|^{-\alpha_0}.
			\]
			Here \(|y|\le C_Ka\) in \(\Sigma_\lambda\), and the factor \(|y|^{-\alpha_0}\) only becomes larger as \(|y|\) decreases. The transition-barrier size estimate gives
			\[
			\Psi_\lambda(y)
			\le C(r_y-\lambda)|y|^{1-\frac n2}
			=C(r_y-\lambda)|y|^{-\alpha_0}
			\]
			in \(\mathcal T_\lambda\), and \(\Psi_\lambda=0\) outside the transition region.
			Thus \eqref{eq:sec3_ms_barrier_eval_input} follows.
		\end{proof}
		\section{Proof of Proposition~\ref{prop:lower} and \ref{prop:El-estimate}}\label{app:A}

		\begin{proof}[Proof of Proposition~\ref{prop:lower}]
			Since \(u\) is continuous and strictly positive in \(\mathcal{B}_1^+ \setminus \{0\}\), for sufficiently small \(\delta > 0\), there exists \(\Lambda_1 > 0\) such that \(u \ge \Lambda_1\) on \(\partial'' \mathcal{B}_\delta^+\).

			Let \(\phi\) be the solution to the mixed boundary value problem
			\begin{align*}
				\begin{cases}
					-L_g \phi = 0,                                                                  & \text{in } \mathcal{B}_\delta^+,            \\
					B_g \phi = \dfrac{\partial \phi}{\partial \nu_g} + \dfrac{n-2}{2} h_g \phi = 0, & \text{on } \partial' \mathcal{B}_\delta^+,  \\
					\phi = \Lambda_1,                                                               & \text{on } \partial'' \mathcal{B}_\delta^+.
				\end{cases}
			\end{align*}
			For \(\delta\) sufficiently small, the first eigenvalue of \(-L_g\) is positive, so \(-L_g\) is coercive and the unique smooth solution \(\phi\) exists by the Lax--Milgram theorem.

			Define \(W = u - \phi\). Then \(W\) satisfies
			\begin{equation*}
				\begin{cases}
					-L_g W = 0,                    & \text{in } \mathcal{B}_\delta^+\setminus\{0\},                           \\
					B_g W = u^{\frac{n}{n-2}} > 0, & \text{on } \partial' \mathcal{B}_\delta^+ \setminus \{0\}, \\
					W = u - \Lambda_1 \ge 0,       & \text{on } \partial'' \mathcal{B}_\delta^+.
				\end{cases}
			\end{equation*}
			The puncture does not create an additional negative boundary minimum.  Indeed, in the non-removable case the positivity of the solution and the isolated singularity imply \(u(x)\to +\infty\) as \(x\to0\), while \(\phi\) is bounded. Hence \(W>0\) on \(\partial''\mathcal B_\varepsilon^+\) for  sufficiently small \(\varepsilon>0\).

			Applying the maximum principle and the Hopf boundary lemma on \(\mathcal B_\delta^+\setminus\mathcal B_\varepsilon^+\), and then letting \(\varepsilon\to0\), gives \(W\ge0\).  If the singularity is removable, the same conclusion follows directly from the ordinary maximum principle on the filled half-ball.  Hence \(u \ge \phi\) in \(\mathcal{B}_\delta^+ \setminus \{0\}\).

			In the smaller half-ball \(\mathcal{B}_{\delta/2}^+\), the Harnack inequality implies \(\phi \ge \Lambda > 0\) for some constant \(\Lambda \in (0,\Lambda_1)\). Consequently,
			\begin{equation*}
				u(x) \ge \Lambda > 0 \quad\text{for all } x \in \mathcal{B}_{\delta/2}^+.
			\end{equation*}

			Translating back to \(v_k(y) = M_k^{-1} u_k(\exp_{x_k}(M_k^{-\frac{2}{n-2}}y))\) and noting that the conformal factor \(\kappa_k  \) close to $1$ near the origin, the desired estimate is obtained.
		\end{proof}

		\begin{proof}[Proof of Proposition~\ref{prop:El-estimate}]
			Write \(v_k=V+\psi_k\), and keep the notation
			\[
			H_k(y)=\frac{n-2}{2}\varepsilon_k h_{\bar g_k}(\varepsilon_k y)
			\]
			for the scaled boundary coefficient. For the limit profile \(V\), the relevant Kelvin bounds are
			\begin{equation}\label{eq:V-kelvin-routeB}
				|V^\lambda(y)|\le C_{\Lambda_0}r^{2-n},\qquad
				|\nabla V^\lambda(y)|\le C_{\Lambda_0}r^{1-n},\qquad
				|\nabla^2V^\lambda(y)|\le C_{\Lambda_0}r^{-n}.
			\end{equation}

			For the remainder \(\psi_k := v_k - V\), note that \(v_k(0) = V(0) = 1\). Since \(0\) is a local maximum of \(v_k\) on the flat boundary, the tangential derivatives vanish: \(\nabla_{y'} v_k(0) = 0 = \nabla_{y'} V(0)\). Furthermore, the boundary condition \(-\partial_{y_n} v_k(0) + H_k(0)v_k(0) = v_k(0)^{\frac{n}{n-2}}\) together with \(H_k(0)=0\) dictates that \(\partial_{y_n} v_k(0) = -1 = \partial_{y_n} V(0)\). Consequently, the remainder satisfies \(\psi_k(0)=0\) and
			\(\nabla\psi_k(0)=0\). Then in the compact region \(|y^\lambda|\le2\Lambda_0\), the convergence \(v_k \to V\) in \(C^2_{\rm loc}(\overline{\mathbb{R}^n_+})\) provides the uniform Taylor bounds
			\begin{equation}\label{eq:psi-est-routeB}
				|\psi_k(y^\lambda)|\le C\sigma_k |y^\lambda|^2,\qquad
				|\nabla\psi_k(y^\lambda)|\le C\sigma_k |y^\lambda|,\qquad
				|\nabla^2\psi_k(y^\lambda)|\le C\sigma_k.
			\end{equation}
			A further computation from \eqref{eq:psi-est-routeB} then shows the Kelvin transform of \(\psi_k\) satisfies
			\begin{equation}\label{eq:psi-kelvin-routeB}
				|\psi_k^\lambda(y)|\le C_{\Lambda_0}\sigma_k r^{-n},\qquad
				|\nabla\psi_k^\lambda(y)|\le C_{\Lambda_0}\sigma_k r^{-n-1},\qquad
				|\nabla^2\psi_k^\lambda(y)|\le C_{\Lambda_0}\sigma_k r^{-n-2}.
			\end{equation}

			It remains to estimate the terms in \(E_\lambda\). At the original point \(y\), the coefficient bounds \(|\bar b_j(y)|\le
			C\varepsilon_k\), \(|\bar d_{ij}(y)|\le C\varepsilon_kr\), and \(|\bar c(y)|\le C\varepsilon_k^3r\), together with
			\eqref{eq:V-kelvin-routeB}--\eqref{eq:psi-kelvin-routeB}, give
			\begin{align*}
				|\bar b_j(y)\partial_jv_k^\lambda(y)|
				&\le
				C\varepsilon_k r^{1-n}
				+C\sigma_k\varepsilon_k r^{-n-1},\\
				|\bar d_{ij}(y)\partial_{ij}v_k^\lambda(y)|
				&\le
				C\varepsilon_k r^{1-n}
				+C\sigma_k\varepsilon_k r^{-n-1},\\
				|\bar c(y)v_k^\lambda(y)|
				&\le
				C\varepsilon_k^3 r^{3-n}
				+C\sigma_k\varepsilon_k^3 r^{1-n}.
			\end{align*}
			Since \(r\le R_k=\delta\varepsilon_k^{-1}\), after shrinking \(\delta\) if
			necessary,
			\[
			\sigma_k\varepsilon_k^3 r^{1-n}
			\le C\sigma_k\varepsilon_k r^{-n-1}.
			\]
			Thus the original-point contribution is bounded by
			\[
			C\left(
			\varepsilon_k r^{1-n}
			+\varepsilon_k^3 r^{3-n}
			+\sigma_k\varepsilon_k r^{-n-1}
			\right).
			\]

			At the reflected point \(y^\lambda\), use \(|y^\lambda|=\lambda^2/r\), \(|\bar b_j(y^\lambda)|\le C\varepsilon_k\),
			\(|\bar d_{ij}(y^\lambda)|\le C\varepsilon_k\lambda^2/r\), and \(|\bar c(y^\lambda)|\le
			C\varepsilon_k^3\lambda^2/r\). Since \(v_k,\nabla v_k,\nabla^2v_k\) are uniformly bounded on \(B^+_{\Lambda_0}\),
			and \(\nabla\psi_k(y^\lambda)=O(\sigma_k|y^\lambda|)\), \(\nabla^2\psi_k(y^\lambda)=O(\sigma_k)\), one obtains
			\begin{align*}
				\left(\frac{\lambda}{r}\right)^{n+2}
				|\bar c(y^\lambda)v_k(y^\lambda)|
				&\le
				C_{\Lambda_0}\varepsilon_k^3
				\lambda^{n+4}r^{-n-3},\\
				\left(\frac{\lambda}{r}\right)^{n+2}
				|\bar b_j(y^\lambda)\partial_jv_k(y^\lambda)|
				&\le
				C_{\Lambda_0}\varepsilon_k
				\lambda^{n+2}r^{-n-2}
				+C_{\Lambda_0}\sigma_k\varepsilon_k
				\lambda^{n+4}r^{-n-3},\\
				\left(\frac{\lambda}{r}\right)^{n+2}
				|\bar d_{ij}(y^\lambda)\partial_{ij}v_k(y^\lambda)|
				&\le
				C_{\Lambda_0}\varepsilon_k
				\lambda^{n+4}r^{-n-3}
				+C_{\Lambda_0}\sigma_k\varepsilon_k
				\lambda^{n+4}r^{-n-3}.
			\end{align*}
			Since \(r\ge\lambda\) and \(\lambda\le\Lambda_0\),
			\[
			\lambda^{n+2}r^{-n-2}\le C_{\Lambda_0}r^{1-n},
			\qquad
			\lambda^{n+4}r^{-n-3}\le C_{\Lambda_0}r^{3-n},
			\]
			and also
			\[
			\lambda^{n+4}r^{-n-3}\le C_{\Lambda_0}r^{-n-1}
			\]
			when it multiplies the \(\sigma_k\)-terms. Hence the reflected-point
			contribution is also bounded by
			\[
			C_{\Lambda_0}\left(
			\varepsilon_k r^{1-n}
			+\varepsilon_k^3 r^{3-n}
			+\sigma_k\varepsilon_k r^{-n-1}
			\right).
			\]
			This proves \eqref{est:E@l}.

			For the boundary error, on \(\partial'\Sigma_\lambda\), using
			\(v_k^\lambda(y)=(\lambda/r)^{n-2}v_k(y^\lambda)\), it can be rewritten as
			\[
			E_\lambda^\partial(y)
			=
			-\left(\frac{\lambda}{r}\right)^{n-2}v_k(y^\lambda)
			\left[
			H_k(y)
			-\left(\frac{\lambda}{r}\right)^2H_k(y^\lambda)
			\right].
			\]
			Since
			\(|H_k(y)|\le C\varepsilon_k^2r\) and \(|\nabla H_k(y)|\le C\varepsilon_k^2\), one has
			\begin{align*}
				\left|
				H_k(y)
				-\left(\frac{\lambda}{r}\right)^2H_k(y^\lambda)
				\right|
				&\le
				|H_k(y)-H_k(y^\lambda)|
				+
				\left(1-\frac{\lambda^2}{r^2}\right)|H_k(y^\lambda)|\le
				C\varepsilon_k^2
				\left(1-\frac{\lambda^2}{r^2}\right)r.
			\end{align*}
			Here the flat-boundary identities
			\[
			|y-y^\lambda|
			=
			r-\frac{\lambda^2}{r}
			=
			r\left(1-\frac{\lambda^2}{r^2}\right),
			\qquad
			|y^\lambda|\le r
			\]
			have been used.
			Finally,
			\[
			v_k(y^\lambda)\le C_{\Lambda_0},
			\qquad
			\left(\frac{\lambda}{r}\right)^{n-2}
			\le C_{\Lambda_0}r^{2-n}.
			\]
			Therefore
			\[
			|E_\lambda^\partial(y)|
			\le
			C_{\Lambda_0}\varepsilon_k^2
			\left(1-\frac{\lambda^2}{r^2}\right)r^{3-n},
			\]
			which proves \eqref{est:Ebd-refined-routeB}.
		\end{proof}
		\section{Proof of Lemma \ref{lem:sec3_kelvin_error_estimates} and \ref{lem:dyadic_transition_barrier}}
		\label{app:sec3_moving_sphere_tools}

		This appendix contains the estimates used in the perturbed moving-sphere input of
		Section~\ref{sec:main_part}. The arguments are standard narrow-domain and
		localized-barrier arguments, but the details are recorded to make the perturbative
		comparison independent of the exact flat conformal invariance.

		\begin{proof}[Proof of Lemma~\ref{lem:sec3_kelvin_error_estimates}]
			The elementary identity
			\[
			y^\lambda
			=\frac{\lambda^2}{r^2}y+\biggl(1-\frac{\lambda^2}{r^2}\biggr)x_0
			\]
			implies
			\[
			|y^\lambda|^2
			=\left(\frac{\lambda}{r}\right)^2|y|^2
			+\left(1-\frac{\lambda^2}{r^2}\right)(a^2-\lambda^2).
			\]
			Since \(a^2-\lambda^2>0\), this gives the sharper lower bound
			\begin{equation*}
				|y^\lambda|\ge \frac{\lambda}{r}|y|\ge C_K^{-1}|y|,
			\end{equation*}
			as \(r\le Ka\) and \(\lambda\ge\theta_*a\).
			In particular, the rough bounds
			\[
			|\mathcal R_{\rm int}(y)|\le C|y|^{-\frac n2},
			\qquad
			|\mathcal R_{\rm bdy}(y)|\le C|y|^{1-\frac n2}
			\]
			hold in \(\Sigma_\lambda\).

			The transition estimates are proved first. Let \(\delta:=r-\lambda\). If
			\(\delta\ge c|y|\), the rough bounds immediately give
			\[
			|\mathcal R_{\rm int}(y)|\le C\delta |y|^{-\frac n2-1},
			\qquad
			|\mathcal R_{\rm bdy}(y)|\le C\delta |y|^{-\frac n2}.
			\]
			Consider now \(\delta<c|y|\), and put
			\[
			\omega:=\frac{y-x_0}{r},
			\qquad
			z(s):=x_0+s\omega,
			\qquad
			s_\lambda:=\frac{\lambda^2}{r}.
			\]
			Then \(y=z(r)\), \(y^\lambda=z(s_\lambda)\), and
			\[
			|r-s_\lambda|=r-\frac{\lambda^2}{r}
			=\frac{r+\lambda}{r}(r-\lambda)
			\le C(r-\lambda).
			\]
			For every \(s\in[s_\lambda,r]\), \(|z(s)-y|\le C\delta<c'|y|\), hence
			\(|z(s)|\sim |y|\). Also \(s\sim r\sim a\) as \(\lambda\ge\theta_*a\) and \(r<Ka\).

			Define
			\[
			F(s):=s^{\frac{n+2}{2}}\mathcal E_{\rm int}(z(s)).
			\]
			Since
			\[
			\left(\frac{\lambda}{r}\right)^{n+2}
			=\left(\frac{s_\lambda}{r}\right)^{\frac{n+2}{2}},
			\]
			one has
			\[
			\mathcal R_{\rm int}(y)
			=r^{-\frac{n+2}{2}}(F(r)-F(s_\lambda)).
			\]
			Using \eqref{eq:sec3_error_derivative_bounds},
			\[
			|F'(s)|
			\le Cs^{\frac n2}|z(s)|^{-\frac n2}
			+Cs^{\frac{n+2}{2}}|z(s)|^{-\frac n2-1}.
			\]
			Multiplying by \(r^{-(n+2)/2}\), and using \(s\sim r\sim a\) and
			\(|z(s)|\sim|y|\), gives
			\[
			r^{-\frac{n+2}{2}}|F'(s)|
			\le C\bigl(a^{-1}|y|^{-\frac n2}+|y|^{-\frac n2-1}\bigr)
			\le C|y|^{-\frac n2-1},
			\]
			because \(|y|\le c_0a\). The mean value theorem therefore yields
			\[
			|\mathcal R_{\rm int}(y)|
			\le C(r-\lambda)|y|^{-\frac n2-1}.
			\]

			The boundary estimate is the same. Set
			\[
			G(s):=s^{\frac n2}\mathcal E_{\rm bdy}(z(s)).
			\]
			Then
			\[
			\mathcal R_{\rm bdy}(y)=r^{-\frac n2}(G(r)-G(s_\lambda)),
			\]
			and \eqref{eq:sec3_boundary_error_derivative_bounds} gives
			\[
			r^{-\frac n2}|G'(s)|
			\le C\bigl(a^{-1}|y|^{1-\frac n2}+|y|^{-\frac n2}\bigr)
			\le C|y|^{-\frac n2}.
			\]
			Hence
			\[
			|\mathcal R_{\rm bdy}(y)|
			\le C(r-\lambda)|y|^{-\frac n2}.
			\]

			For the regular estimates, first assume \(r\le2a\). Then
			\(|y|\ge c_0a\) and \(|y|\le r+a\le3a\), hence \(|y|\sim a\). The rough
			bound gives
			\[
			|\mathcal R_{\rm int}(y)|\le C|y|^{-\frac n2}\le Ca^{-\frac n2}.
			\]
			For the boundary error, if \(\delta:=r-\lambda\ge c|y|\), then
			\[
			|\mathcal R_{\rm bdy}(y)|\le C|y|^{1-\frac n2}
			\le C\delta |y|^{-\frac n2}
			\le C(r-\lambda)a^{-\frac n2}.
			\]
			If \(\delta<c|y|\), then for \(s\in[s_\lambda,r]\) the segment
			\(z(s)=x_0+s\omega\) satisfies \(|z(s)|\sim |y|\sim a\). The same
			one-dimensional mean-value argument used above gives
			\[
			r^{-\frac n2}|G'(s)|\le Ca^{-\frac n2},
			\]
			and therefore
			\[
			|\mathcal R_{\rm bdy}(y)|\le C(r-\lambda)a^{-\frac n2}.
			\]
			This proves the regular estimates when \(r\le2a\).

			If \(r\ge2a\), then \(r-\lambda\ge a/2\) and \(|y|\ge r-a\ge r/2\ge a\).
			The rough estimates imply
			\[
			|\mathcal R_{\rm int}(y)|\le Ca^{-\frac n2},
			\]
			and
			\[
			|\mathcal R_{\rm bdy}(y)|\le C|y|^{1-\frac n2}
			\le Ca^{1-\frac n2}
			\le C(r-\lambda)a^{-\frac n2}.
			\]
			This proves the lemma.
		\end{proof}

		\begin{proof}[Proof of Lemma~\ref{lem:dyadic_transition_barrier}]
			Put
			\[
			\delta:=r-\lambda,
			\qquad
			\sigma_\lambda(r)
			:=\frac{\lambda}{n-2}\left[1-\left(\frac{\lambda}{r}\right)^{n-2}\right].
			\]
			Then
			\[
			\Delta\sigma_\lambda=0,
			\qquad
			\sigma_\lambda(\lambda)=0,
			\]
			and, since \(\theta_*a\le\lambda\le r\le Ka\),
			\begin{equation*}
				C^{-1}\delta\le\sigma_\lambda(r)\le C\delta,
				\qquad
				|\nabla\sigma_\lambda|\le C.
			\end{equation*}
			On the flat boundary,
			\begin{equation*}
				\partial_n\sigma_\lambda
				=\sigma_\lambda'(r)\frac{y_n}{r}=0.
			\end{equation*}

			Choose smooth functions \(\Theta_{\rm col},\Theta_{\rm bulk}\in C^\infty([0,\infty))\)
			such that
			\[
			0\le\Theta_{\rm col},\Theta_{\rm bulk}\le1,
			\qquad
			\Theta_{\rm col}+\Theta_{\rm bulk}=1,
			\]
			\[
			\Theta_{\rm col}=1\text{ on }[0,\eta],
			\qquad
			\Theta_{\rm col}=0\text{ on }[2\eta,\infty),
			\]
			and therefore
			\[
			\Theta_{\rm bulk}=0\text{ on }[0,\eta],
			\qquad
			\Theta_{\rm bulk}=1\text{ on }[2\eta,\infty).
			\]
			The derivatives of these cutoffs are supported in
			\[
			\eta |y|\le \delta\le2\eta |y|.
			\]
			In this overlap region,
			\begin{equation}\label{eq:sec3_overlap_sigma_large}
				\sigma_\lambda\sim\delta\sim |y|.
			\end{equation}
			Hence every commutator without an explicit \(\sigma_\lambda\)-factor has size
			\(O(|y|^{-n/2})\), which is bounded by
			\(C\sigma_\lambda |y|^{-n/2-1}\).

			The proof next records the local barrier construction used on each
			dyadic patch. Let \(\zeta\) be one of the two cutoff types used below, at a
			transition scale \(\rho\), with support contained in a patch where \(|y|\sim\rho\);
			in the collar case one also has \(|p_\lambda(\omega)|\sim\rho\). For each
			such admissible cutoff \(\zeta\), a nonnegative local barrier
			\(B_{\rho,\zeta}\) is constructed with
			\[
			B_{\rho,\zeta}=0\quad\text{on }r=\lambda,
			\qquad
			0\le B_{\rho,\zeta}\le C\sigma_\lambda\rho^{1-\frac n2},
			\]
			\[
			-\Delta B_{\rho,\zeta}
			\ge c\zeta\sigma_\lambda\rho^{-\frac n2-1}
			\]
			in the interior part of the patch, and on the flat boundary,
			\[
			-\partial_nB_{\rho,\zeta}-c_\lambda B_{\rho,\zeta}
			\ge c\zeta\sigma_\lambda\rho^{-\frac n2}.
			\]
			Here the constants depend only on \(n\), \(\theta_*\), and the fixed cutoff
			family. The construction is scale invariant. Rescale the patch by
			\(y=y_*+\rho z\) and set
			\[
			S(z):=\rho^{-1}\sigma_\lambda(y_*+\rho z).
			\]
			Since \(r^{2-n}\) is harmonic away from \(x_0\), \(S\) is harmonic on the
			rescaled patch. Moreover \(S\ge0\), \(S=0\) on the moving-sphere face,
			\(|\nabla_zS|\le C\), and, on the flat boundary,
			\(\partial_{z_n}S=0\). In a fixed boundary strip \(0\le z_n\le\eta_*\), take
			\[
			V_0(z):=S(z)q(z_n),
			\qquad
			q(t)=1-\frac{t}{2\eta_*}-\frac{t^2}{4\eta_*^2}.
			\]
			Then on \(z_n=0\),
			\[
			-\partial_{z_n}V_0-\rho c_\lambda V_0
			=S\{-q'(0)-\rho c_\lambda q(0)\}.
			\]
			Since \(c_\lambda\le C|y|^{-1}\) and \(|y|\sim\rho\), one has
			\(\rho c_\lambda\le C\). Choosing \(\eta_*\) small gives a positive boundary
			margin. Also,
			\[
			-\Delta_zV_0=-S q''-2(\partial_{z_n}S)q'
			\ge cS
			\]
			in the boundary strip, after reducing \(\eta_*\), for \(q'<0\), \(q''<0\),
			and \(\partial_{z_n}S\ge0\) in the half-space. Away from the flat boundary, use a
			fixed scale-one Dirichlet barrier \(V_1\), vanishing on the moving-sphere face,
			with
			\[
			0\le V_1\le CS,
			\qquad
			-\Delta_zV_1\ge C_*S
			\]
			on the support of the enlarged local cutoff. This follows from a fixed mixed
			boundary problem on a compact family of rescaled patches and the scale-one Green
			estimate. Choose \(V_1\) with a fixed vertical cutoff so that it vanishes near
			\(z_n=0\); hence it does not disturb the positive Robin margin supplied by
			\(M_0V_0\). Taking \(V=M_0V_0+V_1\), with \(M_0\) and then \(C_*\) fixed large,
			produces simultaneous interior and boundary margins on the enlarged patch.

			The localization is harmless after choosing an enlarged admissible cutoff
			\(\widehat\zeta\) from the same fixed family, with \(\widehat\zeta\equiv1\) on
			\(\operatorname{supp}\zeta\), and constructing the scale-one barrier on
			\(\operatorname{supp}\widehat\zeta\). The commutators produced by the final
			localization are bounded by a fixed multiple of the scale-one margins, since the
			cutoff family is compact in \(C^2\). For the bulk cutoffs this uses
			\(\delta\gtrsim |y|\) on the support where the radial transition derivatives are
			active, hence \(\sigma_\lambda\sim |y|\). For the collar cutoffs, the dyadic cutoff
			is constant along the \(x_0\)-centered radial rays, so the dangerous derivative
			falling on \(\sigma_\lambda\) is absent; the remaining derivatives belong to the
			fixed compact cutoff family. Denote the resulting localized scale-one barrier
			still by \(V\). Scaling back by setting
			\[
			B_{\rho,\zeta}(y)
			:=\rho^{2-\frac n2}V\!\left(\frac{y-y_*}{\rho}\right)
			\]
			gives the local barrier with the stated bounds.

			The bulk part is now constructed. Let \(\{\chi_j\}\) be a standard dyadic partition
			in \(|y|\) on \(\{\kappa d\le |y|\le2c_0a\}\), with
			\[
			\chi_j\subset\{\rho_j/2\le |y|\le4\rho_j\},
			\qquad
			|\nabla\chi_j|\le C\rho_j^{-1},
			\qquad
			|\Delta\chi_j|\le C\rho_j^{-2},
			\]
			and bounded overlap. Define
			\[
			\zeta_j^{\rm bulk}(y)
			:=\chi_j(|y|)\Theta_{\rm bulk}\!\left(\frac{\delta}{|y|}\right).
			\]
			On the support of \(\zeta_j^{\rm bulk}\), either \(\delta\ge2\eta|y|\), or the
			point lies in the overlap where \eqref{eq:sec3_overlap_sigma_large} holds. In
			both cases \(\sigma_\lambda\ge c\rho_j\), and the radial dyadic cutoff is one of
			the admissible cutoffs in the local construction above. Let
			\(B_j^{\rm bulk}:=B_{\rho_j,\zeta_j^{\rm bulk}}\). Then
			\[
			0\le B_j^{\rm bulk}\le C\sigma_\lambda\rho_j^{1-\frac n2},
			\]
			\[
			-\Delta B_j^{\rm bulk}
			\ge c\zeta_j^{\rm bulk}\sigma_\lambda\rho_j^{-\frac n2-1},
			\]
			and the analogous flat-boundary Robin inequality holds.

			For the collar part, radial cutoffs in \(|y|\) are not
			allowed. Set
			\[
			\omega:=\frac{y-x_0}{r},
			\qquad
			p_\lambda(\omega):=x_0+\lambda\omega.
			\]
			In the collar \(\delta\le2\eta|y|\), if \(\eta\) is sufficiently small then
			\begin{equation}\label{eq:sec3_projection_comparable}
				|p_\lambda(\omega)|\sim |y|.
			\end{equation}
			Indeed, \(y=p_\lambda(\omega)+\delta\omega\), so the upper bound is immediate,
			and the lower bound follows from
			\[
			|p_\lambda(\omega)|\ge |y|-\delta\ge(1-2\eta)|y|.
			\]
			Let \(\{\chi_j^A\}\) be a dyadic partition in
			\(|p_\lambda(\omega)|\), and put
			\[
			\zeta_j^{\rm col}(y)
			:=\chi_j^A\bigl(|p_\lambda(\omega)|\bigr)
			\Theta_{\rm col}\!\left(\frac{\delta}{|y|}\right).
			\]
			The angular cutoffs are constant along the \(x_0\)-centered radial rays. Hence
			\begin{equation*}
				\nabla\chi_j^A\cdot\nabla\sigma_\lambda=0.
			\end{equation*}
			This removes the only cutoff commutator that could lack a factor
			\(\sigma_\lambda\). The derivatives of \(\Theta_{\rm col}\) are supported only in
			the overlap region, where \eqref{eq:sec3_overlap_sigma_large} applies.

			The flat-boundary compatibility of the angular cutoffs is also needed. If
			\(y_n=0\), then \(p_{\lambda,n}=0\) and
			\[
			\partial_n p_\lambda
			=\frac{\lambda}{r}e_n.
			\]
			Therefore
			\[
			\partial_n|p_\lambda|
			=\frac{p_\lambda}{|p_\lambda|}\cdot\partial_np_\lambda
			=\frac{p_\lambda}{|p_\lambda|}\cdot\frac{\lambda}{r}e_n=0.
			\]
			Thus \(\partial_n\chi_j^A=0\) on the flat boundary, and no additional
			normal cutoff term appears in the Robin operator. Consequently
			\(\zeta_j^{\rm col}\) is also admissible for the local construction above. Let
			\(B_j^{\rm col}:=B_{\rho_j,\zeta_j^{\rm col}}\), where
			\(\rho_j\sim |p_\lambda(\omega)|\). By \eqref{eq:sec3_projection_comparable},
			\(\rho_j\sim |y|\) on the support of \(\zeta_j^{\rm col}\), and
			\[
			0\le B_j^{\rm col}\le C\sigma_\lambda\rho_j^{1-\frac n2},
			\]
			\[
			-\Delta B_j^{\rm col}
			\ge c\zeta_j^{\rm col}\sigma_\lambda\rho_j^{-\frac n2-1},
			\]
			with the analogous flat-boundary Robin inequality.

			Define
			\[
			\Psi_\lambda
			:=\sum_j B_j^{\rm bulk}+\sum_j B_j^{\rm col}.
			\]
			The overlap number is uniformly bounded. Since
			\(\Theta_{\rm bulk}+\Theta_{\rm col}=1\), the weights
			\(\{\zeta_j^{\rm bulk}\}\cup\{\zeta_j^{\rm col}\}\) cover the transition core.
			Using the local construction above and
			\(\rho_j\sim |y|\) on each patch, one obtains
			\[
			-\Delta\Psi_\lambda
			\ge c\sigma_\lambda |y|^{-\frac n2-1},
			\]
			and on the flat boundary,
			\[
			-\partial_n\Psi_\lambda-c_\lambda\Psi_\lambda
			\ge c\sigma_\lambda |y|^{-\frac n2}.
			\]
			The transition error estimates from Lemma~\ref{lem:sec3_kelvin_error_estimates} give
			\[
			|\mathcal R_{\rm int}|
			\le C(r-\lambda)|y|^{-\frac n2-1},
			\qquad
			|\mathcal R_{\rm bdy}|
			\le C(r-\lambda)|y|^{-\frac n2}.
			\]
			Since \(\sigma_\lambda\sim r-\lambda\), the preceding two lower bounds dominate
			\(|\mathcal R_{\rm int}|\) and \(|\mathcal R_{\rm bdy}|\) after fixing the
			constants in the local scale-one construction once and for all.

			Finally, the local size bound and bounded overlap imply
			\[
			0\le\Psi_\lambda(y)
			\le C\sigma_\lambda(r)|y|^{1-\frac n2}
			\le C(r-\lambda)|y|^{1-\frac n2}.
			\]
			In the outer cutoff shell \(c_0a\le |y|\le2c_0a\), every active dyadic patch has
			\(\rho_j\sim a\). The same scale-one bounds for the localized pieces give
			\[
			|\Delta B_j|\le C\sigma_\lambda a^{-\frac n2-1}
			\le Ca^{-\frac n2},
			\]
			and, on the flat boundary,
			\[
			\bigl|-\partial_nB_j-c_\lambda B_j\bigr|
			\le C\sigma_\lambda a^{-\frac n2}
			\le C(r-\lambda)a^{-\frac n2}.
			\]
			By bounded overlap, the same estimates hold for \(\Psi_\lambda\). Also every
			local piece contains the factor \(\sigma_\lambda\), hence \(\Psi_\lambda=0\) on
			\(r=\lambda\). This proves the lemma.
		\end{proof}

						\bigskip

					\noindent Y. Liao

					\noindent School of Mathematical Sciences, Beijing Normal University\\
					Beijing 100875, China\\[1mm]
					Email: \textsf{yxliao@mail.bnu.edu.cn}
						\bigskip

						\noindent Y. Ma

						\noindent School of Mathematical Sciences, Beijing Normal University\\
						Beijing 100875, China\\[1mm]
						Email: \textsf{24yxma@mail.bnu.edu.cn}
					\end{document}